\newtheorem{theorem}{Theorem}[section]
\newtheorem{lemma}{Lemma}[section]
\newtheorem{proposition}{Proposition}[section]
\newtheorem{corollary}{Corollary}[section]
\theoremstyle{definition}
\newtheorem*{example}{Example}
\newtheorem*{remark}{Remark}
\numberwithin{equation}{section}
\newcommand{\FF}{\mathbb{F}}    \newcommand{\CC}{\mathbb{C}}
 \def\ZZ{\mathbb{Z}} 
       \def\Res{\mathrm{Res}} \def\dd{\displaystyle}   
\def\vphi{\varphi}  
  \def\wt{\mathrm{wt}}
  \def\scscs{\scriptscriptstyle} \def\scs{\scriptstyle}
\newcommand{\cX}{\mathcal{X}}
\newcommand{\cS}{\mathcal{S}}
\newcommand{\cI}{\mathcal{I}}
\newcommand{\cC}{\mathcal{C}}
\newcommand{\cK}{\mathcal{K}}
\newcommand{\cM}{\mathcal{M}}
\def\cP{\mathcal{P}}
\newcommand{\cR}{\mathcal{R}}
\newcommand{\cA}{\mathcal{A}}
\newcommand{\One}{{1\hspace{-.14cm} 1}}
\newcommand{\one}{{1\hspace{-.11cm} 1}}
\newcommand{\larc}[1]{\hspace{-.4ex}\overset{#1}{\frown}\hspace{-.4ex}}
\newcommand{\slarc}[1]{\overset{#1}{\frown}}
\renewcommand{\@makefnmark}{\mbox{\textsuperscript{}}}
\def\adots{\mathinner{\mkern2mu\raise0pt\hbox{.}  
\mkern2mu\raise4pt\hbox{.}\mkern1mu
\raise7pt\vbox{\kern7pt\hbox{.}}\mkern1mu}}
\begin{document}

\title{Nonzero coefficients in restrictions and tensor\\  products of supercharacters of $U_n(q)$}
\author{Stephen Lewis\\ Department of Mathematics\\ University of Washington\\  \textsf{stedalew@u.washington.edu}
\and
Nathaniel Thiem\\ Department of Mathematics\\ University of Colorado at Boulder\\
 \textsf{thiemn@colorado.edu}}

\date{}

\maketitle

\begin{abstract} 
The standard supercharacter theory of the finite unipotent upper-triangular matrices $U_n(q)$ gives rise to a beautiful combinatorics based on set partitions.  As with the representation theory of the symmetric group, embeddings of $U_m(q)\subseteq U_n(q)$ for $m\leq n$ lead to branching rules.  Diaconis and Isaacs established that the restriction of a supercharacter of $U_n(q)$ is a nonnegative integer linear combination of supercharacters of $U_m(q)$ (in fact, it is polynomial in $q$).   In a first step towards understanding the combinatorics of coefficients in the branching rules of the supercharacters of $U_n(q)$, this paper characterizes when a given coefficient is nonzero in the restriction of a supercharacter and the tensor product of two supercharacters.  These conditions are given uniformly in terms of complete matchings in bipartite graphs.
\end{abstract}

\section{Introduction}

The representation theory of the finite groups of unipotent upper-triangular matrices $U_n(\FF_q)$ has traditionally been a hard problem, where even enumerating the irreducible representations is a well-known wild problem.   In fact, it is not even known if the number of irreducible representations is polynomial in $q$ (the Higman conjecture \cite{Hi60} suggests the affirmative).  However, Andr\'e \cite{An95,An99,An01,An02} and later Yan  \cite{Ya01} demonstrated if one decomposes the regular representation into ``nearly irreducible" pieces (called super-representations) rather than the usual irreducible pieces, one obtains a theory that is far more tractable with beautiful combinatorial underpinnings.  Later, work of Arias-Castro, Diaconis and Stanley \cite{ADS04} demonstrated that this theory could even be used in place of the usual representation theory in an application to random walks, and for more general supercharacter theories,   Otto \cite{Ot09} has shown that they can be used to bound nilpotence classes of nilpotent algebras.

While it has been a guiding principle that the supercharacter theory of $U_n(q)$ is analogous to the representation theory of the symmetric group $S_n$, many $U_n$-analogues of $S_n$ results remain to be worked out.  Some of the known observations include
\begin{enumerate}
\item[(a)] The irreducible characters of $S_n$  are indexed by partitions of $n$, and the supercharacters of $U_n(\FF_q)$ are indexed by (a $q$-analogue of) set partitions of $\{1,2,\ldots, n\}$ \cite{An95,Ya01,ADS04},
\item[(b)] For $S_n$, Young subgroups are a natural family of subgroups which give the corresponding character rings a Hopf structure through induction and restriction.  Similarly, \cite{Th08} defines an analogue to Young subgroups for $U_n(q)$, noting that while in the $S_n$-case the particular embedding of the subgroup typically does not matter, in the $U_n(q)$-case it is critical \cite{MT09,TV09}.  These subgroups are indexed by set-partitions instead of integer partitions.
\item[(c)] As an algebra, the ring of symmetric functions model restriction/induction branching rules for the characters of $S_n$ considered simultaneously for all $n\geq 0$.  The corresponding ring for the supercharacters of $U_n(\FF_q)$ seems to be the ring of symmetric functions in non-commuting variables \cite{Th08}.  
\end{enumerate}
This paper attempts to better understand the combinatorics of branching rule coefficients for the supercharacters of $U_n(\FF_q)$.  

In the symmetric group case, the irreducible character $\chi^\mu\times \chi^\nu$ appears in the decomposition of the restricted character $$\Res^{S_{|\lambda|}}_{S_{|\mu|}\times S_{|\nu|}}(\chi^\lambda)$$
only if $\mu, \nu \subset \lambda$.    For $U_n(q)$ this paper gives both necessary and sufficient conditions for analogous result, using the close relationship between tensor products and restrictions in this case.  In particular, the main results of this paper are
\begin{description}
\item[Theorem \ref{MainTheorem}.]  Given a set partition $\lambda$ and subgroup $U_K\subseteq U_n$, there is a  bipartite graph $\Gamma_K(\lambda)$ such that the trivial character appears in the decomposition of $\Res^{U_n}_{U_K}(\chi^\lambda)$ if and only if the graph has a complete matching.
\item[Theorem \ref{TensorResult}.] Given set partitions $\lambda$, $\mu$, and $\nu$, there is a  bipartite graph $\Gamma(\lambda,\mu,\nu)$ such that the $\chi^\nu$ appears in the decomposition of $\chi^\lambda\otimes\chi^\mu$ if and only if the graph has a complete matching.
\item[Theorem \ref{RestrictionResult}.] Given set partitions $\lambda$, $\mu$ and subgroup $U_K\subseteq U_n$, there is a  bipartite graph $\Gamma_K(\lambda,\mu)$ such that the $\chi^\mu$ appears in the decomposition of $\Res^{U_n}_{U_K}(\chi^\lambda)$ if and only if the graph has a complete matching.
\end{description}
The bipartite graphs referenced in all three results have a uniform construction as described in Section \ref{GeneralGraphSection}, and are remarkably easy to construct given the initial data.  However, the description of the bipartite graph in Theorem \ref{MainTheorem} is particularly nice, so we describe it separately in Section \ref{SectionMainResult}.  A fundamental part of extending Theorem \ref{MainTheorem} to Theorem \ref{TensorResult} and Theorem \ref{RestrictionResult} is a result that rewrites tensor products as restriction, as follows.
\begin{description}
\item[Theorem \ref{StraighteningTheorem}]  Given two supercharacters $\chi^\lambda$ and $\chi^\mu$ of $U_K$, there exists a supercharacter $\chi^\nu$ and groups $U_{K'}\subseteq U_L$ with $U_{K'}\cong U_K$ such that $\chi^\lambda\otimes \chi^\mu$ is the same character of $U_K$ as $q^{-r}\Res_{U_K'}^{U_L}(\chi^\nu)$ is of $U_{K'}$, where $r\in \ZZ_{\geq 0}$.
\end{description}

While these results do not give a complete description of the coefficients, \cite{DI08} showed that these coefficients are always positive integers (in fact, polynomial in $q$).  Thus, a more explicit understanding of the coefficients remains open.    We do, however, give explicit coefficients of the trivial character for a family of ``nice" examples in Theorem \ref{ExplicitCoefficientsTheorem}.  This result then also has generalizations for non-trivial coefficients in the same way that Theorems \ref{TensorResult} and \ref{RestrictionResult} use Theorem \ref{MainTheorem}.

The supercharacter theory studied in this paper is a particular example of a supercharacter theory that has a more general construction on algebra groups \cite{DI08}.   Generalizations of this approach have also been studied by Andr\'e and Neto for maximal unipotents subgroups in other Lie types \cite{AN06,AN09}.  This study of a particular supercharacter theory is somewhat different from recent work by \cite{He08}, which attempts to find all the possible supercharacter theories for a given finite group.  

\subsubsection*{Acknowledgements}

Part of this work (in particular Theorem \ref{MainTheorem}) was in Lewis' undergraduate honors thesis at the University of Colorado at Boulder.  Thiem was supported in part by an NSF grant DMS-0854893.

\section{Preliminaries}

This section sets up the necessary combinatorics of set partitions, which differs from some of the more standard formulations.  From this point of view, the parts of the set partition are less important than the relative sizes of the numbers in the same part.  We then review the definition of a supercharacter theory, and recall the specific supercharacter theory of interest for the finite groups of unipotent upper-triangular matrices, as developed by Andr\'e \cite{An95,An99,An01,An02} and Yan \cite{Ya01}.

\subsection{Set partition combinatorics}

Fix a prime power $q$, and let $\FF_q$ be the finite field with $q$ elements with additive group $\FF_q^+$ and multiplicative group $\FF_q^\times$.

For a  finite subset $K\subseteq \ZZ_{\geq 1}$, let
$$\cA_K(q)=\{i\larc{a}j \mid i,j\in K, i<j,a\in\FF_q^\times\},$$
and 
$$\cA(q)=\bigcup_{K\subseteq \ZZ_{\geq 1}\atop |K|<\infty} \cA_K(q),$$
where $\cA_\emptyset(q)=\{\emptyset\}$.  We will refer to the non-emptyset elements of $\cA(q)$ as \emph{arcs}.

Let
$$\cM(q)=\bigcup_{K\subseteq \ZZ_{\geq 1}\atop |K|<\infty} \cM_K(q),\qquad \text{where}\qquad \cM_K(q)=\{\text{finite multisets in $\cA_K(q)$}\}.$$
For $\lambda\in \cM_K(q)$ and $j,k\in K$, let
\begin{equation}\label{MultiSetStats}
R_{jk}=\{j\larc{a}k\in \lambda\},\qquad
m_{jk}(\lambda)=|R_{jk}|, \qquad \text{and}\qquad 
\wt_{jk}(\lambda)=\sum_{j\slarc{a}k\in R_{jk}} a \in \FF_q,
\end{equation}
where $R_{jk}$ is a multiset.
For example, if 
\begin{equation*}
\lambda=\{1\larc{a}4,1\larc{b}4,1\larc{c}4,3\larc{d}5\}=
\begin{tikzpicture}[baseline=.1cm]
	\foreach \x in {1,...,5} 
		\node (\x) at (\x,0) [inner sep=0pt] {$\bullet$};
	\foreach \x in {1,...,5} 
		\node at (\x,-.25) {$\scs\x$};
	\draw (1) .. controls (1.75,.5) and (3.25,.5) ..  node [auto] {$\scs a$} (4); 
	\draw (1) .. controls (1.75,1) and (3.25,1) .. node [auto] {$\scs b$} (4); 
	\draw (1) .. controls (1.75,1.5) and (3.25,1.5) .. node [auto] {$\scs c$} (4); 
	\draw (3) .. controls (3.5,.75) and (4.5,.75) .. node [auto] {$\scs d$}  (5);
\end{tikzpicture},\qquad \text{for $a,b,c,d\in \FF_q^\times$},
\end{equation*}
then
$$ R_{14}=\{1\larc{a}4,1\larc{b}4,1\larc{c}4\},\quad
m_{14}(\lambda)  =3,\quad
\wt_{14}(\lambda) = a+b+c,\quad \text{and}\quad
R_{23}= \emptyset.
$$
Note we will use a diagrammatic representation of multisets $\lambda\in \cM_K(q)$, by associating to each element of $K$ a node (usually arranged along a horizontal line), and each arc $i\larc{a}j$ in $\lambda$ becomes a labeled edge connecting node $i$ to node $j$.

A \emph{$q$-set partition} of $K$ is a multiset $\lambda\in \cM_K(q)$ such that if $i\larc{a}l,j\larc{b}k\in \lambda$ are two distinct arcs, then $i\neq j$ and $k\neq l$.  Let
$$\cS(q)=\bigcup_{K\subseteq\ZZ_{\geq 1}\atop |K|<\infty} \cS_K(q),\qquad \text{where}\qquad \cS_K(q)=\{\text{$q$-set partitions in $\cM_K(q)$}\}.$$
Note that $2$-set partitions of $K$ are set partitions $\lambda$ of $K$ by the rule that $i$ and $j$ are in the same part if there is a sequence of arcs $i\larc{1}j_1,j_1\larc{1}j_2,\ldots,j_{m-1}\larc{1}j\in \lambda$.  That is, the parts of the set partitions are the connected components of the diagrammatic representation of the $2$-set partition.  For example,
$$\begin{tikzpicture}[baseline=.1cm]
	\foreach \x in {1,...,8} 
		\node (\x) at (\x,0) [inner sep=0pt] {$\bullet$};
	\foreach \x in {1,...,8} 
		\node at (\x,-.25) {$\scs\x$};
	\draw (1) .. controls (1.75,1.25) and (3.25,1.25) ..  node [auto] {$\scs 1$} (4); 
	\draw (2) .. controls (2.25,.5) and (2.75,.5) .. node [auto] {$\scs 1$} (3); 
	\draw (4) .. controls (4.5,.75) and (5.5,.75) .. node [auto] {$\scs 1$} (6); 
	\draw (3) .. controls (3.5,.75) and (4.5,.75) .. node [auto] {$\scs 1$}  (5);
	\draw (6) .. controls (6.5,.75) and (7.5,.75) .. node [auto] {$\scs 1$}  (8);
\end{tikzpicture} \longleftrightarrow \{1,4,6,8\mid 2,3,5\mid 7\}.$$
In this sense, $q$-set partitions are a $q$-analogue of set partitions (although, strictly speaking, they are $(q-1)$-analogues of set partitions).

Let $\lambda\in \cM(q)$.  A \emph{conflict in $\lambda$ over $K$} is either
\begin{enumerate}
\item[(CL)] A pair of distinct arcs $i\larc{a}l,j\larc{b}k\in \lambda$ such that $i=j$ and $k<l$,
\item[(CR)] A pair of distinct arcs $i\larc{a}l,j\larc{b}k\in \lambda$ such that $i<j$ and $k=l$,
\item[(CB)] A pair of distinct arcs $i\larc{a}l,j\larc{b}k\in \lambda$ such that $i=j$ and $k=l$,
\item[(CN)] A  nonempty multiset $\{i\larc{a}k\in \lambda\mid i=j\text{ or } k=j\}$ for some $j\notin K$.
\end{enumerate}

\begin{example}
The multiset 
$$\lambda=
\begin{tikzpicture}[baseline=.1cm]
	\foreach \x in {1,...,6} 
		\node (\x) at (\x,0) [inner sep=0pt] {$\bullet$};
	\foreach \x in {1,...,6} 
		\node at (\x,-.25) {$\scs\x$};
	\draw (1) .. controls (1.75,1) and (3.25,1) ..  node [auto] {$\scs a$} (4); 
	\draw (1) .. controls (1.3,.5) and (1.7,.5) ..  node [auto] {$\scs b$} (2); 
	\draw (2) .. controls (2.75,1) and (4.25,1) .. node [auto] {$\scs c$} (5); 
	\draw (2) .. controls (3,1.5) and (5,1.5) .. node [auto] {$\scs d$} (6); 
	\draw (4) .. controls (4.5,.75) and (5.5,.75) .. node [auto] {$\scs e$}  (6);
	\draw (3) .. controls (3.3,.5) and (3.7,.5) ..  node [above=-1pt,pos=.1] {$\scs f$} (4);
\end{tikzpicture}
$$
has conflicts
$$
\begin{tikzpicture}[baseline=.1cm]
	\foreach \x in {1,2,4} 
		\node (\x) at (\x/2,0) [inner sep=0pt] {$\bullet$};
	\foreach \x in {1,2,4} 
		\node at (\x/2,-.25) {$\scs\x$};
	\draw (1) .. controls (1.75/2,1) and (3.25/2,1) ..  node [auto] {$\scs a$} (4); 
	\draw (1) .. controls (1.3/2,.5) and (1.7/2,.5) ..  node [above=-1pt,pos=.8] {$\scs b$} (2); 
\end{tikzpicture}
,\quad 
\begin{tikzpicture}[baseline=.1cm]
	\foreach \x in {2,5,6} 
		\node (\x) at (\x/2,0) [inner sep=0pt] {$\bullet$};
	\foreach \x in {2,5,6} 
		\node at (\x/2,-.25) {$\scs\x$};
	\draw (2) .. controls (2.75/2,1) and (4.25/2,1) .. node [auto] {$\scs c$} (5); 
	\draw (2) .. controls (3/2,1.5) and (5/2,1.5) .. node [auto] {$\scs d$} (6); 
\end{tikzpicture},
\quad 
\begin{tikzpicture}[baseline=.1cm]
	\foreach \x in {1,3,4,6} 
		\node (\x) at (\x/2,0) [inner sep=0pt] {$\bullet$};
	\foreach \x in {1,3,4,6} 
		\node at (\x/2,-.25) {$\scs\x$};
	\draw (1) .. controls (1.75/2,1) and (3.25/2,1) ..  node [auto] {$\scs a$} (4); 
	\draw (4) .. controls (4.5/2,.75) and (5.5/2,.75) .. node [auto] {$\scs e$}  (6);
	\draw (3) .. controls (3.3/2,.5) and (3.7/2,.5) ..  node [above=-1pt,pos=.1] {$\scs f$} (4);
\end{tikzpicture},
\quad
\begin{tikzpicture}[baseline=.1cm]
	\foreach \x in {2,4,6} 
		\node (\x) at (\x/2,0) [inner sep=0pt] {$\bullet$};
	\foreach \x in {2,4,6} 
		\node at (\x/2,-.25) {$\scs\x$};
	\draw (2) .. controls (3/2,1.5) and (5/2,1.5) .. node [auto] {$\scs d$} (6); 
	\draw (4) .. controls (4.5/2,.75) and (5.5/2,.75) .. node [above=-1pt,pos=.4] {$\scs e$}  (6);
\end{tikzpicture}
$$
over $\{ 1,2,3,5,6\}$, where the conflicts are of type (CL), (CL), (CN), and (CR). 
\end{example}

Conflicts are instances in a multiset which violate the conditions of membership in the set $\cS_K(q)$.  Theorem \ref{ArcRestriction} and Theorem \ref{ArcTensorProduct}, below, may be viewed as representation theoretic ``straightening rules" for resolving such conflicts in the context of restricting and taking tensor products.

\begin{remark}
There are a variety of $q$-analogues of set partition or Stirling numbers in the literature.  This particular $q$-analogue of set partitions is different from the one introduced \cite{HT09} and only seems to appear in connection with supercharacters.   There is also a standard construction for $q$-Stirling numbers (see for example \cite{GR86}), where the Stirling number $S(n,k;q)$ is defined by ``$q$-counting" the number of elements of $\cS_n(2)$ with $n-k$ arcs.  If we let $S_q(n,k)$ be the number of elements of $\cS_n(q)$ with $n-k$ elements, we obtain a recursion
$$\cS_q(n,k)=\cS_q(n-1,k-1)+k(q-1)\cS_q(n-1,k),$$
which is different from  the recursion for $S(n,k;q)$ in \cite{GR86}.
\end{remark}

\subsection{Supercharacters of $U_n(q)$}

Supercharacters were first studied by Andr\'e in relation to $U_n(q)$ as a way to find some more tractable way to understand the representation theory of $U_n(q)$.  Diaconis and Isaacs \cite{DI08} then generalized the concept to arbitrary finite groups, and we reproduce a version of this more general definition below.  

A \emph{supercharacter theory} of a finite group $G$ is a pair $(\cK,\cX)$ where $\cK$ is a partition of $G$ such that 
\begin{enumerate}
\item[(a)] Each $K\in\cK$ is a union of conjugacy classes,
\item[(b)] The identity element of $G$ is in its own part in $\cK$,
\end{enumerate} 
and $\cX$ is a set of characters of $G$ such that 
\begin{enumerate}
\item[(a)] For each irreducible character $\psi$ of $G$ there is a unique $\chi\in\cX$ such that $\langle\chi,\psi\rangle\neq 0$,
\item[(b)] The trivial character $\One\in\cX$,
\item[(c)] The characters of $\cX$ are constant on the parts of $\cK$,
\item[(d)] $|\cK|=|\cX|$.
\end{enumerate}
We will refer to the parts of $\cK$ as \emph{superclasses} and the characters of $\cX$ as \emph{supercharacters}.  For more information on the implications of these axioms see \cite{DI08} (including some redundancies in the definition).

For $n\in\ZZ_{\geq 1}$, let $M_n(\FF_q)$ be the ring of $n\times n$ matrices with entries in the finite field $\FF_q$ with $q$ elements.  Let
$$U_n(q)=\{u\in M_n(\FF_q)\mid u_{ji}=0, u_{ii}=1, u_{ij}\in \FF_q, i<j\}$$
be the group of unipotent upper-triangular matrices.  For $K\subseteq \{1,2,\ldots, n\}$, let
$$U_K(q)=\{u\in U_n(q)\mid u_{ij}=0, i<j, \text{unless $i,j\in K$}\}.$$
Note that $U_K(q)\cong U_{|K|}(q)$.

For $U_K(q)$ there is a standard example of a supercharacter theory developed by Andr\'e and Yan, where $\cK$ and $\cX$ are indexed by $q$-set partitions of $K$.  For the purpose of this paper it suffices to recall the definition of the supercharacters.  Fix a nontrivial group homomorphism, 
$$\vartheta:\FF_q^+\longrightarrow \CC^\times.$$
For $\lambda\in \cS_K(q)$, there is a supercharacter $\chi^\lambda$ given by
\begin{equation} \label{ArcDecomposition}
\chi^\lambda=\bigotimes_{i\slarc{a}l\in \lambda} \chi^{i\slarc{a}l},
\end{equation}
where each $\chi^{i\slarc{a}l}$ is an irreducible character of $U_K(q)$ whose value on the superclass indexed by $\mu\in \cS_K(q)$ is 
$$\chi^{i\slarc{a}l}(\mu)=\left\{\begin{array}{ll} 0, & \text{if $j\larc{b}k\in\mu$ with $i=j<k<l$ or $i<j<k=l$,}\\
\dd \frac{q^{|\{i<j<l\mid j\in K\}|}}{q^{|\{i<j<k<l\mid j\slarc{b}k\in \mu\}|}} \vartheta(a \wt_{il}(\mu)), &  \text{otherwise.}\\ 
\end{array}\right.$$
It can be quickly verified that the linear supercharacters of $U_K(q)$ correspond to $\lambda\in\cS_K(q)$ with $i\larc{a}l\in\lambda$ implies $\{i<j<l\mid j\in K\}=\emptyset$;  the trivial character is $\chi^\emptyset$.  

The superclass $\{1\}$ is indexed by $\emptyset\in \cS_L(q)$.  Thus, for $\lambda\in \cM_L(q)$ with $L\subseteq  \ZZ_{\geq 1}$, the degree of $\chi^\lambda$ is 
$$\chi^\lambda(1)=\prod_{i\slarc{a}l\in \lambda} q^{|\{i<j<l\mid j\in L\}|}.$$ 
If $K\subseteq L$, then define 
\begin{equation}\label{DegreeRatio}
r_K^L(\lambda)=|\{(j,i\larc{a}l)\in L\times \lambda\mid i<j<l, j\notin K\}|.
\end{equation}
Note that if $\lambda\in \cM_L(q)\cap \cM_K(q)$ then $q^{r_K^L(\lambda)}$ is the ratio of the degrees of $\chi^\lambda$ as a character of $U_L(q)$ and $\chi^\lambda$ as a character of $U_K(q)$.  It therefore is a constant that frequently comes up in the restriction of supercharacters.  In fact, by inspection, if  $\lambda\in \cM_L(q)\cap \cM_K(q)$, then
\begin{equation}\label{AlreadyDownArcsRestriction}
\Res_{U_K(q)}^{U_L(q)}(\chi^\lambda)=q^{r_K^L(q)}\chi^\lambda.
\end{equation}

In general, supercharacters are orthogonal with respect to the usual inner product on class functions, and for $\lambda,\mu\in\cS_K(q)$,
\begin{equation}\label{OrthogonalityRelation}
\langle \chi^\lambda,\chi^\mu\rangle =\delta_{\lambda\mu}q^{|\cC(\lambda)|},\qquad \text{where} \quad \cC(\lambda)=\{(i\larc{a}k,j\larc{b}l)\in \lambda\times\lambda\mid i<j<k<l\},
\end{equation}
is the set of \emph{crossings} in $\lambda$.

The following two theorems describe local rules for computing restrictions and tensor products.  To use these rules for arbitrary set partitions, one uses the observations (\ref{ArcDecomposition}) and
$$\Res^{U_L}_{U_K}(\chi^\lambda)=\bigotimes_{i\slarc{a}l} \Res^{U_L}_{U_K}(\chi^{i\slarc{a}l}).$$
In principle, therefore, one can easily compute restrictions and tensor products in a recursive, algorithmic fashion  (see \cite{Th08} for a detailed description of this algorithm, and \cite{Le09} for an implementation of this algorithm in Python).  However, this algorithm does not give an obvious combinatorial interpretation of the resulting coefficients.

The first theorem describes how to restrict an arc from $U_L$ to a subgroup $U_K$, or how to resolve a (CN) conflict.

\begin{theorem}[\cite{Th08,TV09}]\label{ArcRestriction}
For $i\larc{a}l\in\cA_L(q)$ and $K\subseteq L$,
$$\Res_{U_K}^{U_L}(\chi^{i\slarc{a}l})=\left\{\begin{array}{ll} 
\dd  q^{r_K^L(i\slarc{a}l)}\chi^{i\slarc{a}l}, & \text{if $i,l\in K$,}\\
\dd q^{r_K^L(i\slarc{a}l)}\bigg(\chi^{\emptyset}+\sum_{i<j<l\atop j\in K,b\in\FF_q^\times} \chi^{j\slarc{b}l}\bigg), & \text{if $i\notin K, l\in K$,}\\
\dd q^{r_K^L(i\slarc{a}l)}\bigg(\chi^{\emptyset} + \sum_{i<k<l\atop k\in K,b\in\FF_q^\times} \chi^{i\slarc{b}k}\bigg), & \text{if $i\in K, l\notin K$,}\\
\dd q^{r_K^L(i\slarc{a}l)}\bigg((|K\cap [i,l]|(q-1)+1)\chi^\emptyset+(q-1)\hspace{-.35cm}\sum_{i<j<k<l\atop j,k\in K,b\in\FF_q^\times} \hspace{-.25cm} \chi^{j\slarc{b}k}\bigg), & \text{if $i,l\notin K$.}\end{array}\right.$$
\end{theorem}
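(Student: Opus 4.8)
The plan is to treat the four cases in increasing order of difficulty, reducing everything to the second. The first case, $i,l\in K$, is immediate from (\ref{AlreadyDownArcsRestriction}), since $i\larc{a}l\in\cM_K(q)\cap\cM_L(q)$. The fourth case I would deduce from the second and third by transitivity of restriction: factor $\Res^{U_L}_{U_K}=\Res^{U_{K\cup\{i\}}}_{U_K}\circ\Res^{U_L}_{U_{K\cup\{i\}}}$, so that the inner restriction removes the right endpoint $l$ (third case, as $l\notin K\cup\{i\}$) and the outer one removes the left endpoint $i$ (second case); expanding the resulting double sum and collecting the powers of $q$ — using $r^L_{K\cup\{i\}}(i\slarc{a}l)=r^L_K(i\slarc{a}l)$ and $r^{K\cup\{i\}}_K(i\slarc{b}k)=0$ — reproduces the stated formula. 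The third case is the exact mirror of the second under interchanging the roles of left and right endpoints throughout, so it suffices to establish the second: when $i\notin K$ and $l\in K$,
\[ \Res^{U_L}_{U_K}(\chi^{i\slarc{a}l})=q^{r^L_K(i\slarc{a}l)}\Big(\chi^\emptyset+\sum_{i<j<l,\ j\in K,\ b\in\FF_q^\times}\chi^{j\slarc{b}l}\Big). \]

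The key structural point is that a superclass of $U_K(q)$ keeps its label inside $U_L(q)$. Under the standard identification of superclasses of $U_M(q)$ with $q$-set partitions of $M$, the superclass of $w$ corresponds to the canonical form of $w-1$ under row and column operations, which has at most one nonzero entry in each row and each column; if $w\in U_K(q)$ then $w-1$ has nonzero entries only in rows and columns indexed by $K$, so this reduction already happens inside $U_K(q)$, and $w$ lies in the superclass of $U_L(q)$ indexed by the same $\nu\in\cS_K(q)$. Hence $\Res^{U_L}_{U_K}(\chi^{i\slarc{a}l})$ is a superclass function on $U_K(q)$ whose value on $\nu$ is just $\chi^{i\slarc{a}l}(\nu)$, read off from the displayed formula for $\chi^{i\slarc{a}l}$ using the group $U_L$; so it is enough to check, for each $\nu\in\cS_K(q)$, that this equals the value on $\nu$ of the proposed right-hand side, with each $\chi^{j\slarc{b}l}$ there evaluated by the same formula for $U_K$. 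Since $i\notin K$, no arc of $\nu$ has left endpoint $i$, so the vanishing clause for $\chi^{i\slarc{a}l}$ triggers precisely when $\nu$ contains a (necessarily unique) arc $j'\slarc{c}l$ with $i<j'<l$. I would split on whether such an arc exists; in each subcase the sum over $b$ of $\chi^{j\slarc{b}l}(\nu)$ collapses via $\sum_{b\in\FF_q^\times}\vartheta(bc)=-1$ for $c\neq 0$ and $=q-1$ for $c=0$, and the factor $q^{r^L_K(i\slarc{a}l)}$ — which merely counts the vertices of $L\setminus K$ strictly between $i$ and $l$ — pulls out of both sides.

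What is then left is a combinatorial identity about $q$-set partitions, which I would isolate as a lemma: for $\nu\in\cS_V(q)$ with $V=\{v_1<\cdots<v_m\}$, writing $n$ for the number of arcs of $\nu$ and $q_s$ for the number of arcs of $\nu$ with both endpoints among $v_{s+1},\ldots,v_m$,
\[ q^{m-n}=1+(q-1)\sum_s q^{(m-s)-q_s}, \]
the sum over those $s\in\{1,\ldots,m\}$ for which $v_s$ is not a left endpoint of any arc of $\nu$. This is proved by induction on $m$, deleting $v_1$: if $v_1$ is isolated its term contributes $q^{(m-1)-n}$ and the inductive hypothesis evaluates the rest; if $v_1$ begins an arc its term is absent and the inductive hypothesis applies to $\nu$ with that arc removed; in both cases the geometric telescoping turns the right-hand side into $q\cdot q^{(m-1)-n}=q^{m-n}$. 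Applied to $V=K\cap(i,l)$ when $\nu$ has no arc into $l$, the lemma evaluates the collapsed right-hand side and it matches $\chi^{i\slarc{a}l}(\nu)$; applied to $V=K\cap(j',l)$ when $j'\slarc{c}l\in\nu$, it shows the contributions cancel to $0$, matching the vanishing of $\chi^{i\slarc{a}l}(\nu)$. The hard part will be precisely this lemma together with the bookkeeping around it — keeping the exponents $q_s$ and the index set of non-left-endpoints in step with the character formula, and in particular noticing that arcs of $\nu$ leaving the relevant interval to the right contribute to neither, so that the lemma applies verbatim. The mirror argument for the third case, the transitivity argument for the fourth, and the extraction of $q^{r^L_K(i\slarc{a}l)}$ are then routine.
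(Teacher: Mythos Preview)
This theorem is not proved in the paper: it is quoted from \cite{Th08,TV09} (note the bracketed citation in the theorem header) and used as input throughout. So there is no proof here to compare your attempt against.

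That said, your approach is sound. The reduction of the first and fourth cases to the second and third is clean; your computation for the fourth case via $\Res^{U_L}_{U_K}=\Res^{U_{K\cup\{i\}}}_{U_K}\circ\Res^{U_L}_{U_{K\cup\{i\}}}$ goes through exactly as you describe, and the mirror symmetry between the second and third cases is genuine. Your direct verification of the second case by evaluating both sides on an arbitrary superclass $\nu\in\cS_K(q)$ is legitimate, and the combinatorial lemma you isolate is correct; the induction works once you observe that in the branch where $v_1$ begins an arc, removing that arc leaves $v_1$ isolated, so you are really inducting on $(m,n)$ lexicographically. Your remark that arcs of $\nu$ leaving the interval to the right affect neither side is precisely what makes the lemma apply verbatim to $\nu|_V$. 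One point worth spelling out beyond your sketch: in the subcase where $\nu$ contains $j'\slarc{c}l$, the terms with $j<j'$ vanish, the term $j=j'$ contributes $-q^{m'-n''}$ via $\sum_b\vartheta(bc)=-1$, and the terms with $j>j'$ are handled by the lemma on $K\cap(j',l)$, yielding $q^{m'-n''}-1$; together with the constant $1$ this cancels to $0$.
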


The next theorem explains how to resolve conflicts of types (CL), (CR) or (CB).  It was originally proved in \cite{Ya01}, but a proof can also be found in \cite{Th08}.

\begin{theorem}[\cite{Ya01}]\label{ArcTensorProduct}
For $i\larc{a}l,j\larc{b}k\in\cA_K(q)$,
$$\chi^{\{i\slarc{a}l,j\slarc{b}k\}}=\left\{\begin{array}{ll} 
\dd \chi^{\{i\slarc{a}l,j\slarc{b}k\}}, & \text{if no conflict,}\\
\dd \chi^{i\slarc{a}l}+\sum_{i<j'<l\atop j'\in K,c\in\FF_q^\times} \chi^{\{i\slarc{a}l,j'\slarc{c}k\}}, & \text{if (CL), $k<l$,}\\
\dd \chi^{i\slarc{a}l} + \sum_{i<k'<l\atop k'\in K,c\in\FF_q^\times} \chi^{\{i\slarc{a}l,j\slarc{c}k'\}}, & \text{if (CR), $i<j$,}\\
\dd \chi^{\emptyset}+\sum_{i<k'<l\atop k'\in K,c\in\FF_q^\times} (\chi^{i\slarc{c}k'}+\chi^{k'\slarc{c}l})+\sum_{i<j',k'<l\atop j',k'\in K,c,d\in\FF_q^\times} \hspace{-.25cm} \chi^{\{i\slarc{c}j',k'\slarc{d}l\}}, & \text{if (CB), $a+b= 0$,}\\
\dd (|K\cap [i,l]|(q-1)+1)\chi^{i\slarc{a+b}l}+(q-1)\hspace{-.35cm}\sum_{i<j'<k'<l\atop j',k'\in K,c\in\FF_q^\times} \hspace{-.25cm} \chi^{\{i\slarc{a+b}l,j'\slarc{c}k'\}}, & \text{if (CB), $a+b\neq 0$.}\end{array}\right.$$
\end{theorem}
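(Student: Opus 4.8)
The plan is to verify the identity directly by evaluating both sides on superclasses. Recall first that, extending (\ref{ArcDecomposition}), for any multiset $\lambda\in\cM_K(q)$ the symbol $\chi^\lambda$ denotes $\bigotimes_{i\slarc{a}l\in\lambda}\chi^{i\slarc{a}l}$; thus the left-hand side is literally the character $\chi^{i\slarc{a}l}\otimes\chi^{j\slarc{b}k}$ of $U_K(q)$, which is constant on superclasses. The right-hand side is a nonnegative integer combination of supercharacters, hence also a superclass function. Since a superclass function is determined by its values on the superclasses, indexed by $\mu\in\cS_K(q)$, it suffices to show, for every such $\mu$, that
$$\chi^{i\slarc{a}l}(\mu)\,\chi^{j\slarc{b}k}(\mu)$$
equals the value at $\mu$ of the claimed right-hand side. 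If $\{i\slarc{a}l,j\slarc{b}k\}$ has no conflict it is a genuine $q$-set partition of $K$ and the equality is the definition of $\chi^{\{i\slarc{a}l,j\slarc{b}k\}}$, so from now on assume there is a conflict of type (CL), (CR), or (CB).

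Fix $\mu\in\cS_K(q)$. The character formula for single-arc supercharacters shows that $\chi^{i\slarc{a}l}(\mu)$ either vanishes --- precisely when $\mu$ contains an arc $i\slarc{c}k'$ with $i<k'<l$ or an arc $j'\slarc{c}l$ with $i<j'<l$ --- or else equals $q^{e(i,l,\mu)}\vartheta(a\,\wt_{il}(\mu))$, where the nonnegative exponent $e(i,l,\mu)$ depends only on $i$, $l$, $K$, and $\mu$, not on the label $a$; the same holds for $\chi^{j\slarc{b}k}(\mu)$. Hence the left-hand side is $0$ unless $\mu$ is compatible with both arcs, in which case it equals $q^{\,e(i,l,\mu)+e(j,k,\mu)}\,\vartheta\!\big(a\,\wt_{il}(\mu)+b\,\wt_{jk}(\mu)\big)$. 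On the right-hand side the only genuinely new feature is summation over $\FF_q^\times$ of the $\vartheta$-values attached to newly created arcs, and such sums are collapsed by the orthogonality relation $\sum_{c\in\FF_q}\vartheta(cx)=q\,\delta_{x,0}$, i.e.\ $\sum_{c\in\FF_q^\times}\vartheta(cx)=q\,\delta_{x,0}-1$. What is left is bookkeeping: for each conflict type, partition the superclasses $\mu$ according to how the arcs of $\mu$ meet the interval $[i,l]$ and the vertices $i,j,k,l$, and on each block match the power of $q$ and the $\vartheta$-factor on the two sides.

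I expect the (CB) case with $a+b=0$ to be the main obstacle. There $j=i$ and $k=l$, so the left-hand side at $\mu$ is $\chi^{i\slarc{a}l}(\mu)\,\chi^{i\slarc{-a}l}(\mu)=|\chi^{i\slarc{a}l}(\mu)|^{2}$ --- the vanishing condition and the $q$-power are independent of the label, and $\vartheta(-a\,\wt_{il}(\mu))=\overline{\vartheta(a\,\wt_{il}(\mu))}$ --- which is $q^{2e(i,l,\mu)}$ when $\mu$ is compatible with an arc from $i$ to $l$ and $0$ otherwise. Matching this against the terms on the right --- the trivial character, the single-arc characters $\chi^{i\slarc{c}k'}$ and $\chi^{k'\slarc{c}l}$ with $i<k'<l$, and the two-arc characters $\chi^{\{i\slarc{c}j',k'\slarc{d}l\}}$ with $i<j',k'<l$ --- requires determining, for the fixed $\mu$, exactly which of these supercharacters is nonzero at $\mu$ and then evaluating the resulting character sums over $\FF_q^\times$; this is precisely where the multiplicity $|K\cap[i,l]|(q-1)+1$ comes from (and, in the branch $a+b\neq0$, the factor $q-1$ on the two-arc terms). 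The (CL) and (CR) cases follow the same template with fewer terms, and in all cases one must also check that the ``degenerate'' superclasses --- those killed by the first clause of the single-arc formula --- annihilate both sides simultaneously.

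Finally, a more conceptual alternative is the Andr\'e--Yan two-sided orbit construction: each $\chi^{i\slarc{a}l}$ is the supercharacter attached to the two-sided $U_K\times U_K$ orbit of the rank-one functional on strictly upper-triangular matrices supported at the $(i,l)$ entry with value $a$; because $\vartheta$ is a homomorphism, tensoring supercharacters corresponds to translating one orbit by a point of the other, so the decomposition of $\chi^{i\slarc{a}l}\otimes\chi^{j\slarc{b}k}$ is read off from the orbit structure of the sum of the two rank-one functionals, which splits into exactly the four cases above with the stated coefficients. This avoids the casework at the cost of setting up the orbit machinery, which is why the direct verification is the more self-contained route.
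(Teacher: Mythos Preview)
The paper does not give its own proof of this theorem: it is quoted as a result of Yan \cite{Ya01}, with the remark that a proof can also be found in \cite{Th08}. So there is no in-paper argument to compare against.

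Your direct-verification strategy is sound and would succeed. Both sides are superclass functions, the single-arc character values are explicit, and the identity $\sum_{c\in\FF_q^\times}\vartheta(cx)=q\delta_{x,0}-1$ is exactly the mechanism that collapses the sums over $\FF_q^\times$ on the right. The case analysis you outline (partitioning $\mu$ by which arcs touch $i,j,k,l$) is the correct bookkeeping, and you are right that (CB) with $a+b=0$ is where the most terms must be tracked; the computation is tedious but routine once the vanishing/nonvanishing of each summand at $\mu$ is tabulated. One small caution: in the (CB), $a+b=0$ case the sum on the right runs over all $i<j',k'<l$ without the ordering constraint $j'<k'$, so the two-arc terms $\chi^{\{i\slarc{c}j',k'\slarc{d}l\}}$ include further conflicts when $j'\geq k'$; your verification must treat those as genuine tensor products, not as supercharacters indexed by set partitions.

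It is worth noting that the ``more conceptual alternative'' you mention at the end --- the two-sided orbit construction on $\mathfrak{u}_n^*$ --- is in fact the original route: Yan's thesis builds the supercharacters from these orbits, and the tensor-product decomposition is read off from how the orbit of the sum of two rank-one functionals breaks up. So what you present as the backup is closer to the literature's proof, while your primary approach is a legitimate, if more laborious, self-contained alternative.
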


It follows from Theorem \ref{ArcTensorProduct} that for any $\lambda\in \cM_K(q)$, the product
$$\chi^\lambda=\bigotimes_{i\slarc{a}l\in \lambda} \chi^{i\slarc{a}l},$$
is a character of $U_K(q)$ and a $\ZZ_{\geq 0}$-linear combination of supercharacters.  The following proposition summarizes some of the observations of this section.

\begin{proposition}
Let $\lambda\in \cM_K(q)$.  Then
\begin{enumerate}
\item[(a)]  If $\lambda\in\cS_K(q)$, then $\chi^\lambda$ is a supercharacter of $U_K(q)$,
\item[(b)]  If  $\lambda\in\cS_K(q)$ and $\lambda$ has no crossings, then $\chi^\lambda$ is an irreducible character of $U_K(q)$.
\end{enumerate}
\end{proposition}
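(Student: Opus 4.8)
The plan is to obtain both parts essentially for free from material already recorded in this section, namely the decomposition (\ref{ArcDecomposition}), Theorem \ref{ArcTensorProduct}, and the orthogonality relation (\ref{OrthogonalityRelation}).

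For part (a) I would first note that each $\chi^{i\slarc{a}l}$ is, by construction, an irreducible character of $U_K(q)$, so for any $\lambda\in\cM_K(q)$ the product $\chi^\lambda=\bigotimes_{i\slarc{a}l\in\lambda}\chi^{i\slarc{a}l}$ is a genuine (non-virtual) character of $U_K(q)$, of strictly positive degree $\prod_{i\slarc{a}l\in\lambda}q^{|\{i<j<l\mid j\in K\}|}$. When $\lambda\in\cS_K(q)$ this product is exactly the Andr\'e--Yan supercharacter attached to $\lambda$ in (\ref{ArcDecomposition}), so there is nothing more to prove; alternatively, if one prefers to see it via Theorem \ref{ArcTensorProduct}, a short induction on $|\lambda|$ suffices, the point being that $\lambda\in\cS_K(q)$ forces all arcs of $\lambda$ to lie in $\cA_K(q)$ (so there is no conflict of type (CN)) and forbids two distinct arcs from sharing a left or a right endpoint (so there is no conflict of type (CL), (CR), or (CB)); hence at each step one is always in the ``no conflict'' case of Theorem \ref{ArcTensorProduct}, and no new supercharacters are ever introduced.

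For part (b) I would add the hypothesis that $\lambda$ has no crossings, i.e.\ $\cC(\lambda)=\emptyset$ in the notation of (\ref{OrthogonalityRelation}), and compute $\langle\chi^\lambda,\chi^\lambda\rangle=q^{|\cC(\lambda)|}=q^0=1$ using (\ref{OrthogonalityRelation}) (which applies since, by part (a), $\lambda\in\cS_K(q)$). Because $\chi^\lambda$ is a genuine character, writing $\chi^\lambda=\sum_\psi n_\psi\psi$ as a $\ZZ_{\geq 0}$-linear combination of pairwise distinct irreducible characters $\psi$ gives $\sum_\psi n_\psi^2=\langle\chi^\lambda,\chi^\lambda\rangle=1$, which forces a single $n_\psi$ to equal $1$ and the rest to vanish; hence $\chi^\lambda$ is irreducible.

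There is no serious obstacle here: the proposition is a bookkeeping summary of the section, and both parts reduce to unwinding the definitions together with the already-recorded orthogonality formula. The only place that calls for a moment's care is checking in (a) that membership in $\cS_K(q)$ genuinely excludes every one of the four conflict types, so that $\chi^\lambda$ is a single supercharacter rather than a nontrivial nonnegative combination of several.
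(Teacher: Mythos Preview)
Your proposal is correct and matches the paper's approach exactly: the paper presents this proposition without proof, explicitly calling it a summary ``of the observations of this section,'' and your argument simply spells out those observations---(\ref{ArcDecomposition}) for part (a) and the orthogonality relation (\ref{OrthogonalityRelation}) for part (b). There is nothing to add.
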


\section{Coefficient of trivial character}

This section investigates the coefficient of the trivial character $\One=\chi^\emptyset$ in the restriction from $U_L(q)$ to a subgroup $U_K(q)$.  In particular, Theorem \ref{MainTheorem} characterizes when the coefficient of $\One$ is nonzero in the restriction of a supercharacter.  Although the theorem seems somewhat specific, in later sections we will use it to analyze the coefficients of arbitrary supercharacters in both restrictions and tensor products.  

We begin with the statement of the main result.  To prove the theorem, we define a poset on $\cM(q)$ where two multisets are related if and only if there is a way to resolve conflicts using some combination of Theorems \ref{ArcRestriction} and \ref{ArcTensorProduct}.  We then use this poset to show that many arcs do not affect whether the coefficient is nonzero, and so may be removed.  Once we have eliminated the excess arcs, we obtain our main result.

\subsection{Main result} \label{SectionMainResult}

Given a set partition $\lambda\in \cS_L(q)$ and a subset $K\subseteq L$, let $\Gamma_K(\lambda)$ be the bipartite graph given by vertices 
\begin{align*}
V_\bullet &=\{i\larc{}j\in \lambda\mid i,j\in K\}\\
V_\circ &=\{i\larc{}j\in \lambda\mid i,j\notin K\},
\end{align*}
and an edge from $j\larc{}k\in V_\bullet$ to $i\larc{}l\in V_\circ$ if $i<j<k<l$.   Note that this graph has in general far fewer vertices than $\lambda$ has arcs.  The following theorem is the main result of the paper, and is a model for the remaining results in this paper.

\begin{theorem}\label{MainTheorem}
Let $K\subseteq L\subseteq \ZZ_{\geq 1}$ be finite sets, and let $\lambda\in \cS_L(q)$ be a $q$-set partition.   Then
$$\langle \Res_{U_K}^{U_L}(\chi^\lambda),\One\rangle\neq 0,$$
if and only if 
the graph $\Gamma_K(\lambda)$ has a complete matching from $V_\bullet$ to $V_\circ$.
\end{theorem}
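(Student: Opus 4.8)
The plan is to reduce the general restriction to a sequence of local moves governed by Theorems~\ref{ArcRestriction} and \ref{ArcTensorProduct}, and to track the coefficient of $\One=\chi^\emptyset$ through these moves. First I would observe that by \eqref{ArcDecomposition} the restriction $\Res_{U_K}^{U_L}(\chi^\lambda)$ is a tensor product of the restrictions $\Res_{U_K}^{U_L}(\chi^{i\slarc{a}l})$ over the arcs $i\larc{a}l$ of $\lambda$; applying Theorem~\ref{ArcRestriction} to each factor yields an expression in which every resulting term is indexed by a multiset in $\cM_K(q)$, and I then resolve the conflicts in that multiset by repeated application of Theorem~\ref{ArcTensorProduct}. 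The first key point is that, since $\chi^\emptyset$ is the trivial character, $\langle\Res_{U_K}^{U_L}(\chi^\lambda),\One\rangle\neq 0$ if and only if $\chi^\emptyset$ appears in \emph{some} leaf of this resolution tree, because all the coefficients appearing in Theorems~\ref{ArcRestriction} and \ref{ArcTensorProduct} are nonnegative (indeed positive powers of $q$ times positive integers), so there can be no cancellation.

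Next I would introduce a poset on $\cM(q)$: declare $\mu \preceq \mu'$ (relative to $K$) if $\mu$ occurs as an index of a term obtained from $\chi^{\mu'}$ by applying one of the conflict‐resolution or restriction steps above, and take the transitive closure. With $\chi^\emptyset$ being the unique minimal element reachable from a linear character, the problem becomes: is $\emptyset$ reachable from $\lambda$ via these moves (after first passing through the $U_L\to U_K$ restriction of each arc)? The heart of the argument is a reduction lemma showing that many arcs of $\lambda$ are irrelevant. Concretely: an arc $i\larc{a}l$ with $i,l\in K$ and \emph{not} nested around any arc lying entirely outside $K$ can be discarded (its restriction is just $q^{r_K^L}\chi^{i\slarc{a}l}$ by the first case of Theorem~\ref{ArcRestriction}, and one checks this $\chi^{i\slarc{a}l}$ factor forces a nonzero, nontrivial contribution unless it is "absorbed" by nesting over an outside arc via a (CN) resolution followed by (CB)-type cancellations); symmetrically, an arc $i\larc{a}l$ with $i,l\notin K$ that does not nest any arc inside $K$ contributes only $\chi^\emptyset$ plus characters that cannot be killed. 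After these removals one is left exactly with the data recorded by $\Gamma_K(\lambda)$: vertices $V_\bullet$ (inside arcs), $V_\circ$ (outside arcs), and edges recording the nesting relation $i<j<k<l$.

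Finally I would analyze this reduced situation directly. Each outside arc $i\larc{a}l\in V_\circ$, when restricted by the fourth case of Theorem~\ref{ArcRestriction}, produces a $\chi^\emptyset$ term together with "inside pieces" $\chi^{j\slarc{b}k}$ for $i<j<k<l$; to kill every resulting inside arc of $\lambda$ itself, one must pair it off — via a (CB) conflict with $a+b=0$, which is the only mechanism in Theorem~\ref{ArcTensorProduct} that returns $\chi^\emptyset$ — against a piece produced by some outside arc that nests it. This is precisely a matching in $\Gamma_K(\lambda)$ saturating $V_\bullet$: the inside arc $j\larc{}k$ is matched to the outside arc $i\larc{}l$ that "cancels" it. Conversely, given a complete matching one builds an explicit sequence of resolution steps reaching $\emptyset$. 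The main obstacle I anticipate is the bookkeeping in the reduction lemma: showing that an inside arc not covered by any outside arc genuinely \emph{cannot} be eliminated requires a careful invariant — essentially that at every stage some honest (non-$\chi^\emptyset$) supercharacter survives — and dually that the positive coefficients guarantee that once a complete matching exists, one specific resolution order does produce a nonzero $\chi^\emptyset$ coefficient without needing to control its exact value. Handling the degenerate overlaps (arcs sharing an endpoint, conflicts of type (CL)/(CR) created along the way) cleanly inside the poset framework is where most of the care will go.
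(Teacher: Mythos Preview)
Your overall architecture (conflict poset, reduction, matching analysis) matches the paper's, and your forward direction---pairing each inside arc with a nesting outside arc via a (CB) cancellation---is essentially the paper's argument. But there is a genuine gap in your reduction step.

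You divide the arcs of $\lambda$ into those with $i,l\in K$ and those with $i,l\notin K$, and claim that after your ``removals'' you are left with exactly the data of $\Gamma_K(\lambda)$. You never treat the arcs with $|\{i,l\}\cap K|=1$. These are not harmless: by Theorem~\ref{ArcRestriction} an arc $i\larc{a}l$ with $i\in K$, $l\notin K$ restricts to $q^{r}\big(\chi^\emptyset+\sum_{i<k<l,\,b}\chi^{i\slarc{b}k}\big)$, and the new arcs $i\larc{b}k$ have \emph{both} endpoints in $K$, so they interact nontrivially with the inside/outside structure. Showing that these half-in-half-out arcs can be discarded without changing whether $\langle\Res(\chi^\lambda),\One\rangle$ vanishes is exactly the content of the paper's Lemma~\ref{NoHalves}, and that lemma in turn rests on Lemma~\ref{MainLemma} (which the paper calls ``the crux of proving the main result''): if no arc of $\lambda$ emanates from $i$, then $\langle\Res(\chi^\lambda),\chi^{i\slarc{a}l}\rangle\neq 0$ already forces $\langle\Res(\chi^\lambda),\One\rangle\neq 0$. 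This is the delicate step---one must exhibit an alternate path in the poset that still reaches $\emptyset$ after rerouting the arc that would have become $i\larc{a}l$---and your proposal contains nothing corresponding to it.

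A secondary issue: your ``reduction lemma'' for inside arcs is stated backwards. An inside arc $j\larc{}k$ with $j,k\in K$ that is \emph{not} nested by any outside arc cannot be discarded; it is precisely an isolated vertex of $V_\bullet$ in $\Gamma_K(\lambda)$ and is an obstruction to the matching (and to the appearance of $\One$). Your parenthetical acknowledges this (``forces a nonzero, nontrivial contribution''), which contradicts ``can be discarded.'' What actually gets discarded in the paper is only the half-in-half-out arcs, via Lemma~\ref{NoHalves}; the inside and outside arcs all remain and form $V_\bullet\cup V_\circ$.
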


\begin{remark}
The complete matching referred to in Theorem \ref{MainTheorem} is a one-sided matching.  That is, every element in $V_\bullet$ must be matched to a corresponding element in $V_\circ$, but there could potentially be elements of $V_\circ$ not matched to elements of $V_\bullet$.  For example, if 
$$\Gamma_1=\begin{tikzpicture}[baseline=.75cm]
	\node (01) at (0,1.5) [inner sep=0pt] {$\circ$};
	\node (11) at (1.5,1.5) [inner sep=0pt] {$\circ$};
	\node (21) at (3,1.5) [inner sep=0pt] {$\circ$};
	\node (00) at (.75,0) [inner sep=0pt] {$\bullet$};
	\node (10) at (2.25,0) [inner sep=0pt] {$\bullet$};
	\draw (10) -- (01) -- (00) -- (21);
	\draw (00) -- (11) -- (10);
\end{tikzpicture}
\qquad\text{and}\qquad
\Gamma_2=
\begin{tikzpicture}[baseline=.75cm]
	\node (11) at (1.5,1.5) [inner sep=0pt] {$\circ$};
	\node (00) at (.75,0) [inner sep=0pt] {$\bullet$};
	\node (10) at (2.25,0) [inner sep=0pt] {$\bullet$};
	\draw (00) -- (11) -- (10);
\end{tikzpicture}$$
then $\Gamma_1$ has a complete matching from $V_\bullet$ to $V_\circ$ and $\Gamma_2$ does not.
\end{remark}

\begin{example}
If $K=\{1,4,5,6,7,9\}$ and 
$$\lambda=\begin{tikzpicture}[baseline=.3cm]
	\foreach \x in {1,4,5,6,7,9} 
		\node (\x) at (\x,0) [inner sep=0pt] {$\bullet$};
	\foreach \x in {2,3,8,10} 
		\node (\x) at (\x,0) [inner sep=0pt] {$\circ$};
	\foreach \x in {1,...,10} 
		\node at (\x,-.25) {$\scs\x$};
	\draw (1) .. controls (1.5,.75) and (2.5,.75) .. node [auto] {$\scs a$} (3); 
	\draw (2) .. controls (4,2.25) and (8,2.25) .. node [auto] {$\scs b$} (10); 
	\draw (4) .. controls (4.75,1) and (6.25,1) .. node [auto] {$\scs c$} (7);
	\draw (7) .. controls (7.5,.75) and (8.5,.75) ..  node [auto] {$\scs d$} (9);
	\draw (3) .. controls (4.25,1.5) and (6.75,1.5) .. node [auto] {$\scs e$} (8);
	\draw (5) .. controls (5.25,.5) and (5.75,.5) .. node [above=-1.5pt] {$\scs f$} (6);
\end{tikzpicture}$$
then $V_\bullet=\{4\larc{c}7,5\larc{f}6,7\larc{d}9\}$, $V_\circ=\{2\larc{b}10,3\larc{e}8\}$, and 
$$\Gamma_K(\lambda)=
\begin{tikzpicture}[baseline=.75cm]
	\node (01) at (.75,1.5) [inner sep=0pt] {$\circ$};
	\node at (.75,1.75) [inner sep=0pt] {$\scs 2\slarc{b}10$};
	\node (11) at (2.25,1.5) [inner sep=0pt] {$\circ$};
	\node  at (2.25,1.75) [inner sep=0pt] {$\scs 3\slarc{e}8$};
	\node (00) at (0,0) [inner sep=0pt] {$\bullet$};
	\node at (0,-.25) [inner sep=0pt] {$\scs 4\slarc{c}7$};
	\node (10) at (1.5,0) [inner sep=0pt] {$\bullet$};
	\node at (1.5,-.25) [inner sep=0pt] {$\scs 5\slarc{f}6$};
	\node (20) at (3,0) [inner sep=0pt] {$\bullet$};
	\node  at (3,-.25) [inner sep=0pt] {$\scs 7\slarc{d}9$};
	\draw (11) -- (00) -- (01) -- (20);
	\draw (01) -- (10) -- (11);
\end{tikzpicture}
$$
Since this graph has no complete matchings from $V_\bullet$ to $V_\circ$, by Theorem \ref{MainTheorem} $\langle \Res_{U_K}^{U_L}(\chi^\lambda),\One\rangle=0$.
\end{example}

\subsection{The conflict poset}

There is a reasonably natural poset that helps model branching rules given by Theorem \ref{ArcRestriction} or Theorem \ref{ArcTensorProduct}.  As observed in Proposition \ref{PosetToCoefficient}, below,  two multi-partitions are comparable in the following poset if one appears in the decomposition of the other in some restriction or tensor product of characters.

 We say $\mu\prec \lambda$ if there exists a conflict $\gamma\subseteq \lambda$ over $K$ such that $\lambda-\gamma\subseteq \mu$ and  
for $\nu= \mu-(\lambda-\gamma)$, either 
 \begin{enumerate}
\item[(PL)] $\gamma=\{i\larc{a}l,i\larc{b}k\}$ with $k<l$ and $\nu\in\Big\{\{i\larc{a}l\},\{i\larc{a}l,j\larc{c}k\}\mid i<j<k,c\in\FF_q^\times\Big\}$,
\item[(PR)] $\gamma=\{i\larc{a}l,j\larc{b}l\}$ with $i<j$ and $\nu\in\Big\{\{i\larc{a}l\},\{i\larc{a}l,j\larc{c}k\}\mid j<k<l,c\in\FF_q^\times\Big\}$,
\item[(PB)] $\gamma=\{i\larc{a}l,i\larc{b}l\}$ with 
\begin{itemize}
\item $a+b= 0$ and $\nu\in\Big\{\emptyset,\{i\larc{c}k\},\{j\larc{c}l\}, \{i\larc{c}k,j\larc{d}l\}\mid i<j<l,i<k<l,c,d\in\FF_q^\times\Big\}$,
\item $a+b\neq 0$ and $\nu\in\Big\{\{i\larc{a+b}l\},\{i\larc{a+b}l,j\larc{c}k\}\mid i<j<k<l,c\in\FF_q^\times\Big\}$.
\end{itemize}
\item[(PN)] There exists $j\notin K$ such that $\gamma=\{i\larc{a}k\in \lambda\mid i=j\text{ or } k=j\}$, and there exists an injective function $\iota:\nu\rightarrow \gamma$  such that if $\iota(k\larc{a}l)=j\larc{b}l'$ then $l=l'$ and $j<k$; and if  $\iota(h\larc{a}i)=h'\larc{b}j$, then $h=h'$ and $i<j$.
\end{enumerate}
Extending this relation transitively gives a partial order $\cP$ for all of $\cM(q)$, which we will refer to as the \emph{conflict poset}.   Roughly speaking, as one moves down the poset one resolves conflicts by shrinking arcs or making them disappear altogether (ie. using Theorems \ref{ArcRestriction} and \ref{ArcTensorProduct}), so the rules (PL), (PR), (PB), and (PN) match up with the corresponding conflicts (CL), (CR), (CB) and (CN).

\begin{remark}
Note that even though (PN) is the only condition that explicitly spells it out, if $\mu\preceq \lambda$ then there is an injective function $\iota:\mu\rightarrow \lambda$ that satisfies $\iota(j\larc{b}k)=i\larc{a}l$ implies $i\leq j<k\leq l$.  We will use this to keep track of arcs as we follow a string of relations in $\cP$.  
\end{remark}

\begin{example}  If
$$\lambda=
\begin{tikzpicture}[baseline=.1cm]
	\foreach \x in {1,...,6} 
		\node (\x) at (\x,0) [inner sep=0pt] {$\bullet$};
	\foreach \x in {1,...,6} 
		\node at (\x,-.25) {$\scs\x$};
	\draw (1) .. controls (1.75,1) and (3.25,1) ..  node [auto] {$\scs a$} (4); 
	\draw (2) .. controls (2.75,1) and (4.25,1) .. node [auto] {$\scs b$} (5); 
	\draw (4) .. controls (4.5,.75) and (5.5,.75) .. node [auto] {$\scs c$}  (6);
\end{tikzpicture}\in \cS_6(q)
\qquad\text{and}\qquad 
\mu=
\begin{tikzpicture}[baseline=.1cm]
	\foreach \x in {3,4,5} 
		\node (\x) at (\x,0) [inner sep=0pt] {$\bullet$};
	\foreach \x in {3,4,5} 
		\node at (\x,-.25) {$\scs\x$};
	\draw (3) .. controls (3.25,.5) and (3.75,.5) ..  node [auto] {$\scs d$} (4); 
\end{tikzpicture}\in \cS_{\{3,4,5\}}(q),
$$
then the interval between $\lambda$ and $\mu$ is
$$\begin{tikzpicture}
		\node (1B1) at (7.75,9.5) [inner sep=0pt] {};
		\foreach \x in {1,...,6} 
			\node (1\x) at (6+\x/2,10) [inner sep=0pt] {$\bullet$};
		\foreach \x in {1,...,6} 
			\node at (6+\x/2,9.75) {$\scs\x$};
		\draw (11) .. controls (6+1.75/2,10+1/2) and (6+3.25/2,10+1/2) ..  node [above] {$\scs a$} (14); 
		\draw (12) .. controls (6+2.75/2,10+1/2) and (6+4.25/2,10+1/2) .. node [above] {$\scs b$} (15); 
		\draw (14) .. controls (6+4.5/2,10+.75/2) and (6+5.5/2,10+.75/2) .. node [above] {$\scs c$}  (16);
		\node(15B1) at (5.25,7.5) [inner sep=0pt] {};
		\foreach \x in {2,...,6} 
			\node (2\x) at (3.5+\x/2,8) [inner sep=0pt] {$\bullet$};
		\foreach \x in {2,...,6} 
			\node at (3.5+\x/2,7.75) {$\scs\x$};
		\draw (22) .. controls (3.5+2.5/2,8+.75/2) and (3.5+3.5/2,8+.75/2) ..  node [above=-1pt] {$\scs a'$} (24); 
		\draw (22) .. controls (3.5+2.75/2,8+1.75/2) and (3.5+4.25/2,8+1.75/2) .. node (15T1) [above] {$\scs b$} (25); 
		\draw (24) .. controls (3.5+4.5/2,8+.75/2) and (3.5+5.5/2,8+.75/2) .. node [above] {$\scs c$}  (26);				
		\node (2B1) at (3.75,5.5) [inner sep=0pt] {};
		\foreach \x in {2,...,6} 
			\node (2\x) at (2+\x/2,6) [inner sep=0pt] {$\bullet$};
		\foreach \x in {2,...,6} 
			\node at (2+\x/2,5.75) {$\scs\x$};
		\draw (23) .. controls (2+3.25/2,6+.5/2) and (2+3.75/2,6+.5/2) ..  node [above] {$\scs d$} (24); 
		\draw (22) .. controls (2+2.75/2,6+1.5/2) and (2+4.25/2,6+1.5/2) .. node (2T1) [above] {$\scs b$} (25); 
		\draw (24) .. controls (2+4.5/2,6+.75/2) and (2+5.5/2,6+.75/2) .. node [above] {$\scs c$}  (26);
		\node (2B2) at (7,5.5) [inner sep=0pt] {};
		\foreach \x in {1,...,5} 
			\node (3\x) at (5.25+\x/2,6) [inner sep=0pt] {$\bullet$};
		\foreach \x in {1,...,5} 
			\node at (5.25+\x/2,5.75) {$\scs\x$};
		\draw (31) .. controls (5.25+1.75/2,6+1/2) and (5.25+3.25/2,6+1/2) ..  node [above] {$\scs a$} (34); 
		\draw (32) .. controls (5.25+2.75/2,6+1.5/2) and (5.25+4.25/2,6+1.5/2) .. node (2T2) [above] {$\scs b$} (35); 
		\draw (34) .. controls (5.25+4.25/2,6+.75/2) and (5.25+4.75/2,6+.75/2) .. node [above=-2pt,pos=.1] {$\scs c'$}  (35);
		\node (3B1) at (11,3.5) [inner sep=0pt] {};
		\foreach \x in {1,...,5} 
			\node (4\x) at (9.25+\x/2,4) [inner sep=0pt] {$\bullet$};
		\foreach \x in {1,...,5} 
			\node at (9.25+\x/2,3.75) {$\scs\x$};
		\draw (41) .. controls (9.25+1.75/2,4+1/2) and (9.25+3.25/2,4+1/2) ..  node [above] {$\scs a$} (44); 
		\draw (42) .. controls (9.25+2.75/2,4+1/2) and (9.25+4.25/2,4+1/2) .. node (3T1) [above] {$\scs b$} (45); 
		\node (2B3) at (9.75,5.5) [inner sep=0pt] {};
		\node (2B3S) at (11.25,5.75) [inner sep=0pt] {};
		\foreach \x in {1,3,4,5,6} 
			\node (5\x) at (8+\x/2,6) [inner sep=0pt] {$\bullet$};
		\foreach \x in {1,3,4,5,6} 
			\node at (8+\x/2,5.75) {$\scs\x$};
		\draw (51) .. controls (8+1.75/2,6+1/2) and (8+3.25/2,6+1/2) ..  node [above] {$\scs a$} (54); 
		\draw (54) .. controls (8+4.5/2,6+1.25/2) and (8+5.5/2,6+1.25/2) .. node [above] {$\scs c$} (56); 
		\draw (54) .. controls (8+4.25/2,6+.75/2) and (8+4.75/2,6+.75/2) .. node (2T3) [above] {$\scs b'$}  (55);
		\node (3B2) at (13.75,3.5) [inner sep=0pt] {};
		\foreach \x in {1,3,4,5,6} 
			\node (6\x) at (12+\x/2,4) [inner sep=0pt] {$\bullet$};
		\foreach \x in {1,3,4,5,6} 
			\node at (12+\x/2,3.75) {$\scs\x$};
		\draw (61) .. controls (12+1.75/2,4+1/2) and (12+3.25/2,4+1/2) ..  node (3T2) [above] {$\scs a$} (64); 
		\draw (64) .. controls (12+4.5/2,4+.75/2) and (12+5.5/2,4+.75/2) .. node [above] {$\scs c$} (66); 		
		\node (4B1) at (1.75,1.5) [inner sep=0pt] {};
		\foreach \x in {3,...,6} 
			\node (7\x) at (\x/2,2) [inner sep=0pt] {$\bullet$};
		\foreach \x in {3,...,6} 
			\node at (\x/2,1.75) {$\scs\x$};
		\draw (73) .. controls (3.25/2,2+.5/2) and (3.75/2,2+.5/2) ..  node [above] {$\scs d$} (74); 
		\draw (74) .. controls (4.25/2,2+.5/2) and (4.75/2,2+.5/2) .. node (4T1) [above] {$\scs b'$} (75); 
		\draw (74) .. controls (4.5/2,2+.75/2) and (5.5/2,2+.75/2) .. node [above] {$\scs c$}  (76);
		\node (5B1) at (9,-.5) [inner sep=0pt] {};
		\foreach \x in {3,...,6} 
			\node (8\x) at (7.25+\x/2,0) [inner sep=0pt] {$\bullet$};
		\foreach \x in {3,...,6} 
			\node at (7.25+\x/2,-.25) {$\scs\x$};
		\draw (83) .. controls (7.25+3.25/2,.5/2) and (7.25+3.75/2,.5/2) ..  node [above] {$\scs d$} (84); 
		\draw (84) .. controls (7.25+4.5/2,.75/2) and (7.25+5.5/2,.75/2) .. node (5T1) [above] {$\scs c$}  (86);
		\node (5B2) at (11.75,-.5) [inner sep=0pt] {};
		\foreach \x in {2,...,5} 
			\node (9\x) at (10+\x/2,0) [inner sep=0pt] {$\bullet$};
		\foreach \x in {2,...,5} 
			\node at (10+\x/2,-.25) {$\scs\x$};
		\draw (92) .. controls (10+2.75/2,1.5/2) and (10+4.25/2,1.5/2) ..  node (5T2) [above] {$\scs b$} (95); 
		\draw (93) .. controls (10+3.25/2,.5/2) and (10+3.75/2,.5/2) ..  node [above] {$\scs d$} (94); 
		\node (4B2) at (4.5,1.5) [inner sep=0pt] {};
		\foreach \x in {2,...,5} 
			\node (10\x) at (2.75+\x/2,2) [inner sep=0pt] {$\bullet$};
		\foreach \x in {2,...,5} 
			\node at (2.75+\x/2,1.75) {$\scs\x$};
		\draw (102) .. controls (2.75+2.75/2,2+2/2) and (2.75+4.25/2,2+2/2) ..  node (4T2) [above] {$\scs b$} (105); 
		\draw (103) .. controls (2.75+3.25/2,2+.5/2) and (2.75+3.75/2,2+.5/2) ..  node [above] {$\scs d$} (104); 
		\draw (104) .. controls (2.75+4.25/2,2+.5/2) and (2.75+4.75/2,2+.5/2) ..  node [above=-1pt,pos=.2] {$\scs c'$} (105); 
		\node (4B3) at (7.25,1.5) [inner sep=0pt] {};
		\foreach \x in {1,3,4,5} 
			\node (11\x) at (5.5+\x/2,2) [inner sep=0pt] {$\bullet$};
		\foreach \x in {1,3,4,5} 
			\node at (5.5+\x/2,1.75) {$\scs\x$};
		\draw (111) .. controls (5.5+1.75/2,2+1.5/2) and (5.5+3.25/2,2+1.5/2) ..  node (4T3) [above] {$\scs a$} (114); 
		\draw (114) .. controls (5.5+4.25/2,2+1.5/2) and (5.5+4.75/2,2+1.5/2) ..  node [above] {$\scs -c'$} (115); 
		\draw (114) .. controls (5.5+4.25/2,2+.5/2) and (5.5+4.75/2,2+.5/2) ..  node [above=-1pt] {$\scs c'$} (115); 
		\node (5B3) at (14.5,-.5) [inner sep=0pt] {};
		\foreach \x in {1,3,4,5} 
			\node (12\x) at (12.75+\x/2,0) [inner sep=0pt] {$\bullet$};
		\foreach \x in {1,3,4,5} 
			\node at (12.75+\x/2,-.25) {$\scs\x$};
		\draw (121) .. controls (12.75+1.75/2,1.5/2) and (12.75+3.25/2,1.5/2) ..  node (5T3) [above] {$\scs a$} (124); 
		\node (6B1) at (3.75,-2.5) [inner sep=0pt] {};
		\foreach \x in {3,4,5} 
			\node (13\x) at (2+\x/2,-2) [inner sep=0pt] {$\bullet$};
		\foreach \x in {3,4,5} 
			\node at (2+\x/2,-2.25) {$\scs\x$};
			\draw (133) .. controls (2+3.25/2,-2+.5/2) and (2+3.75/2,-2+.5/2) ..  node [above] {$\scs d$} (134); 
			\draw (134) .. controls (2+4.25/2,-2+1.5/2) and (2+4.75/2,-2+1.5/2) ..  node (6T1) [above] {$\scs -c'$} (135); 
			\draw (134) .. controls (2+4.25/2,-2+.5/2) and (2+4.75/2,-2+.5/2) ..  node [above=-1pt] {$\scs c'$} (135); 
		\foreach \x in {3,4,5} 
			\node (14\x) at (6+\x/2,-3.5) [inner sep=0pt] {$\bullet$};
		\foreach \x in {3,4,5} 
			\node at (6+\x/2,-3.75) {$\scs\x$};
			\draw (143) .. controls (6+3.25/2,-3.5+.5/2) and (6+3.75/2,-3.5+.5/2) ..  node (7T1) [above] {$\scs d$} (144); 
		\draw (1B1) -- node [above,sloped] {\small (PN)} (2T2);
		\draw (1B1) -- node [above,sloped] {\small (PN)} (2T3);	
		\draw (1B1) -- node [above,sloped,pos=.7] {\small (PN)} (15T1);
		\draw (15B1) -- node [below,sloped] {\small (PL)} (2T1);
		\draw (2B2) -- node [above,sloped,near end] {\small (PR)} (3T1);
		\draw (2B3S) -- node [above,sloped] {\small (PL)} (3T2);
		\foreach \x in {1,2}
			\draw (2B1) -- node [above,sloped] {\small (PN)} (4T\x);
		\draw (2B2) -- node [above,sloped, near start] {\small (PN)} (4T2);
		\draw (2B2) -- node [above,sloped,pos=.7] {\small (PN)} (4T3);
		\draw (2B3) -- node [above,sloped,pos=.45] {\small (PN)} (4T1);
		\draw (2B3) -- node [above,sloped] {\small (PN)} (4T3);
		\foreach \x in {2,3}
			\draw (3B1) -- node [below,sloped,pos=.55] {\small (PN)} (5T\x);
		\foreach \x in {1,3}
			\draw (3B2) -- node [above,sloped,pos=.3] {\small (PN)} (5T\x);
		\draw (4B1) -- node [below,sloped] {\small (PL)} (5T1);
		\draw (4B2) -- node [below,sloped] {\small (PR)} (5T2);
		\draw (4B3) -- node [above,sloped,pos=.4] {\small (PB)} (5T3);
		\foreach \x in {1,2,3}
			\draw (5B\x) -- node [below,sloped,pos=.4] {\small (PN)}  (7T1);
		\foreach \x in {1,2,3}
			\draw (4B\x) -- node [below,sloped,pos=.6] {\small (PN)} (6T1);
		\draw (6B1) -- node [above,sloped,] {\small (PB)} (7T1);

\end{tikzpicture}
$$
where to be able depict the interval we have combined the nodes corresponding to the different values of $a',b',c'\in \FF_q^\times$.
\end{example}

\begin{remark}  The conflict poset $\cP$ is related to two natural posets,
\begin{enumerate}
\item[(a)] The poset on $\cA$ given by nesting, or $j\larc{a}k\leq i\larc{b}l$ if $i\leq j<k\leq l$.
\item[(b)]  The poset $\cP$ is a subposet of the poset on $\cM$ given by $\mu\leq \lambda$ if there exists an injective function $\iota:\mu\rightarrow \lambda$ such that $\iota(j\larc{a}k)=i\larc{b}l$, where $i\leq j<k\leq l$.   We use this observation to think of arcs as shrinking as we move down the poset (and potentially disappearing).
\end{enumerate}
\end{remark}

A \emph{path} in $\cP$ is a directed path in the Hasse diagram of the poset $\cP$.  The following proposition justifies the existence of this poset.

\begin{proposition} \label{PosetToCoefficient} Let $\lambda\in \cM_L(q)$ and $\mu\in\cM_K(q)$ with $K\subseteq L$.  
Then $\mu\prec \lambda$ if and only if either
\begin{enumerate}
\item[(a)] $K=L$ and $\chi^{\lambda}=c_\mu^\lambda\chi^{\mu}+\chi'$, where $c_\mu^\lambda\in \ZZ_{>0}$ and $\chi'$ is a character of $U_K(q)$,
\item[(b)] $K\subset L$ and $\Res_{U_K(q)}^{U_L(q)}(\chi^\lambda)=c_\mu^\lambda\chi^\mu +\chi',$ where $c_\mu^\lambda\in \ZZ_{>0}$ and $\chi'$ is a character of $U_K(q)$.
\end{enumerate}
\end{proposition}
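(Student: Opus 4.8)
The plan is to read both implications off Theorems~\ref{ArcRestriction} and~\ref{ArcTensorProduct} — which perform, respectively, the restriction of a single arc and the straightening of a conflicting pair of arcs — and to propagate them through the transitive closure defining $\cP$. The core is the one-step case: suppose $\mu$ is obtained from $\lambda$ by resolving a single conflict $\gamma\subseteq\lambda$ over $K$, and set $\nu=\mu-(\lambda-\gamma)$. Using $\chi^\delta=\bigotimes_{i\slarc{a}l\in\delta}\chi^{i\slarc{a}l}$ for every multiset $\delta\in\cM(q)$ and that restriction distributes over such tensor products, I would split off the conflicting arcs: if $\gamma$ has type (CL), (CR), or (CB) the ambient group is unchanged and $\chi^\lambda=\chi^{\lambda-\gamma}\otimes\chi^{\gamma}$; if $\gamma$ is a (CN)-conflict at an index $j\notin K$ then $\Res^{U_L}_{U_{L\setminus\{j\}}}(\chi^\lambda)=\bigotimes_{e\in\lambda}\Res^{U_L}_{U_{L\setminus\{j\}}}(\chi^{e})$, where the first case of Theorem~\ref{ArcRestriction} fixes each arc not incident to $j$ up to a power of $q$, while each arc of $\gamma$ has exactly one endpoint at $j$ and so is expanded by the second or third case of Theorem~\ref{ArcRestriction}. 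Expanding the remaining conflicting factor (by Theorem~\ref{ArcTensorProduct} in the first case), one checks that the multisets indexing the resulting terms are precisely the $(\lambda-\gamma)\sqcup\nu'$ with $\nu'$ running over the list permitted in the matching clause (PL), (PR), (PB), or (PN), and that every coefficient is a positive integer. Since distinct $\nu'$ yield distinct multisets $(\lambda-\gamma)\sqcup\nu'$, the term attached to our own $\nu$ contributes $c\,\chi^\mu$ with $c\in\ZZ_{>0}$, while the others assemble into a character $\chi'$ (each $\chi^{(\lambda-\gamma)\sqcup\nu'}$ being a $\ZZ_{\geq 0}$-combination of supercharacters, as observed just after Theorem~\ref{ArcTensorProduct}); this is the one-step form of (a) and (b).

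For the forward implication, given $\mu\prec\lambda$ in $\cP$ I would fix a chain $\lambda=\lambda_0,\lambda_1,\dots,\lambda_n=\mu$ with each $\lambda_i\prec\lambda_{i-1}$ a cover, and compose the one-step identities above. Each cover either leaves the ambient group fixed (a (CL)/(CR)/(CB) step) or deletes one index (a (CN) step); so, since restriction is transitive and sends characters to characters — hence nonnegative combinations of supercharacters to nonnegative combinations — the composite equals $\Res^{U_L}_{U_K}$ and gives $\Res^{U_L}_{U_K}(\chi^\lambda)=c\,\chi^\mu+\chi'$ with $c\in\ZZ_{>0}$ and $\chi'$ a character; this is (a) when $K=L$ (the composite is the identity) and (b) when $K\subsetneq L$. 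The bookkeeping of the intermediate groups, and of any indices of $L\setminus K$ not met by the surviving arcs, is absorbed using~\eqref{AlreadyDownArcsRestriction}.

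For the converse I would induct on the total arc length $\sigma(\lambda)=\sum_{i\slarc{a}l\in\lambda}(l-i)$, which every resolution rule strictly decreases since it shrinks or deletes at least one arc. Assume (a) or (b). If $K\subsetneq L$ but $\lambda$ meets no index of $L\setminus K$, then $\lambda\in\cM_K(q)$ and $\Res^{U_L}_{U_K}(\chi^\lambda)=q^{r_K^L(\lambda)}\chi^\lambda$ by~\eqref{AlreadyDownArcsRestriction}, reducing to the case $K=L$; and if $\lambda$ has no conflict over $K$ at all it is a $q$-set partition, so $\chi^\lambda$ (or its restriction) is a single supercharacter and $\chi^\mu$ can occur only for $\mu=\lambda$, which is excluded. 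Otherwise pick a conflict $\gamma$ over $K$ and resolve it as above, expressing $\chi^\lambda$ — or, in case (b), $\Res^{U_L}_{U_K}(\chi^\lambda)$ — as a nonnegative combination $\sum_{\nu'}c_{\nu'}\xi_{\nu'}$, where $\xi_{\nu'}$ is $\chi^{(\lambda-\gamma)\sqcup\nu'}$ or its restriction to $U_K$, each $(\lambda-\gamma)\sqcup\nu'\prec\lambda$, and $\sigma((\lambda-\gamma)\sqcup\nu')<\sigma(\lambda)$. As $\chi^\mu$ occurs on the left it occurs in some $\xi_{\nu'}$; either $\mu=(\lambda-\gamma)\sqcup\nu'$, giving $\mu\prec\lambda$ at once, or the inductive hypothesis gives $\mu\prec(\lambda-\gamma)\sqcup\nu'$, and transitivity finishes the proof.

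The substance of the argument — and the step I expect to be the main obstacle — is the term-by-term comparison in the one-step case: checking that the straightening output of Theorems~\ref{ArcRestriction} and~\ref{ArcTensorProduct} agrees exactly with the catalog of $\nu'$'s in (PL)--(PN), including the precise pairing of subcases (the constraint relating (CL) with $k<l$ to (PL), the $a+b=0$ versus $a+b\neq 0$ dichotomy in (CB) and (PB), and the way an arc incident to the deleted index $j$ ``contracts toward $j$'' in (PN)), together with the observation that distinct choices of $\nu'$ never collapse the coefficient bookkeeping. Everything else is formal manipulation of tensor products and restrictions, plus the orthogonality~\eqref{OrthogonalityRelation} of supercharacters.
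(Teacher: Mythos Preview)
Your proposal is correct and follows exactly the approach the paper takes: the poset covers were designed to mirror the decomposition rules in Theorems~\ref{ArcRestriction} and~\ref{ArcTensorProduct}, so comparability in $\cP$ is equivalent to a term appearing with positive coefficient. The paper's own proof is a single sentence (``Following a path in $\cP$ corresponds to resolving conflicts in $\lambda$ using either Theorem~\ref{ArcRestriction} or Theorem~\ref{ArcTensorProduct}''), so you have essentially written out in full the bookkeeping the authors left implicit --- including the induction on total arc length for the converse, which the paper does not make explicit.
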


\begin{proof}
Following a path in $\cP$ corresponds to resolving conflicts in $\lambda$ using either Theorem \ref{ArcRestriction} or Theorem \ref{ArcTensorProduct}.
\end{proof}

\begin{remark} \hfill

\begin{enumerate}
\item[(a)] It is tempting to assume that $c_\mu^\lambda$ is equal to the number of paths from $\lambda$ to $\mu$ in the Hasse diagram.  However, this turns out to be false.  To get such a result one would need to use a subposet of $\cP$ by essentially specifying a ``canonical order" for resolving conflicts.
\item[(b)] Another way to view the purpose of this paper is as a characterization of  when two set partitions are comparable in $\cP$, or more generally when an arbitrary multiset is greater than a set partition.
\end{enumerate}
\end{remark}

\subsection{Proof of Theorem \ref{MainTheorem}}

This section proves Theorem  \ref{MainTheorem}.  Lemma \ref{MainLemma} is the heart of the proof, allowing us to deduce the appearance of the trivial character in a decomposition from the appearance of another supercharacter.   Unfortunately, the proof is quite technical, and there is probably a more elegant proof  of the main theorem to be found that avoids this lemma.  Lemma \ref{NoHalves} then shows that given $\lambda\in \cS_L(q)$, we can remove many of the arcs of $\lambda$ without changing whether $\One$ appears in the decomposition.  The final proof then uses that related elements in $\cP$ have an implicit injective map from the smaller element to the larger one. 

For $\lambda\in\cM_K(q)$, let $\bar{\lambda}\in \cM_K(q)$ be given by 
$$\bar{\lambda}=\{i\larc{-a}l\mid i\larc{a}l\in \lambda\}.$$
Note that by (\ref{ArcDecomposition}) the conjugate character $\overline{\chi^\lambda}=\chi^{\bar\lambda}$.   The first proposition is a standard result for irreducible characters, but its analogue bears observing for supercharacters.

\begin{proposition}
Let $\lambda,\mu\in\cS_K(q)$ with $K\subseteq \ZZ_{\geq 1}$.  Then
$$\langle\chi^\lambda\otimes\chi^\mu,\One\rangle=\delta_{\lambda\bar{\mu}}q^{|\cC(\lambda)|}.$$
\end{proposition}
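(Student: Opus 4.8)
The plan is to reduce the statement to the orthogonality relation (\ref{OrthogonalityRelation}) via the standard adjunction between tensoring and conjugation. First I would invoke the elementary identity for characters of a finite group $G$: for any characters $\chi,\psi$ of $G$,
$$\langle \chi\otimes\psi,\One\rangle=\frac{1}{|G|}\sum_{g\in G}\chi(g)\psi(g)=\langle\chi,\overline{\psi}\rangle,$$
since $\overline{\One(g)}=1$ for all $g$. Applying this with $G=U_K(q)$, $\chi=\chi^\lambda$, $\psi=\chi^\mu$ gives $\langle\chi^\lambda\otimes\chi^\mu,\One\rangle=\langle\chi^\lambda,\overline{\chi^\mu}\rangle$.

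Next I would use the observation recorded just before the statement, namely that $\overline{\chi^\mu}=\chi^{\bar\mu}$ (which follows from the tensor decomposition (\ref{ArcDecomposition}) together with $\overline{\vartheta(x)}=\vartheta(-x)$, so that conjugating negates every arc label). The only point requiring a word of justification is that $\bar\mu\in\cS_K(q)$: this is immediate because the conditions defining a $q$-set partition in $\cM_K(q)$ concern only the endpoints of arcs, not their $\FF_q^\times$-labels, so replacing each $i\larc{a}l$ by $i\larc{-a}l$ preserves membership in $\cS_K(q)$. Hence both $\lambda$ and $\bar\mu$ lie in $\cS_K(q)$ and the orthogonality relation (\ref{OrthogonalityRelation}) applies, yielding
$$\langle\chi^\lambda,\chi^{\bar\mu}\rangle=\delta_{\lambda\bar\mu}\,q^{|\cC(\lambda)|}.$$
Combining the two displays gives the claim.

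There is essentially no obstacle here; the argument is purely formal once one notes that $\bar\mu$ is again a $q$-set partition. If one wants the statement phrased symmetrically, one may additionally remark that when $\lambda=\bar\mu$ we have $\cC(\lambda)=\cC(\bar\mu)$ (crossings, like the set-partition conditions, are insensitive to labels), so the exponent could equally well be written $|\cC(\mu)|$.
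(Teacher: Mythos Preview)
Your proof is correct and follows exactly the same approach as the paper's own proof, which is the one-line computation $\langle\chi^\lambda\otimes\chi^\mu,\One\rangle=\langle\chi^\lambda,\chi^{\bar{\mu}}\rangle=\delta_{\lambda\bar{\mu}}q^{|\cC(\lambda)|}$ via orthogonality. You have simply filled in the routine justifications (the adjunction identity, why $\bar\mu\in\cS_K(q)$) that the paper leaves implicit.
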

\begin{proof}
By the orthogonality of supercharacters,
$$\langle\chi^\lambda\otimes\chi^\mu,\One\rangle=\langle\chi^\lambda,\chi^{\bar{\mu}}\rangle=\delta_{\lambda\bar{\mu}}q^{|\cC(\lambda)|}.\qedhere$$
\end{proof}

The following lemma allows us to deduce the appearance of the trivial character in the decomposition of a restriction from the appearance of a single arc that was not in the original set partition.  The proof of this lemma is the crux of proving the main result.

\begin{lemma} \label{MainLemma}
Let $K\subseteq L\subseteq \ZZ_{\geq 1}$ be finite sets, and let $\lambda\in\cS_L(q)$ .  
\begin{enumerate}
\item[(a)] If there exists $i\in K$ such that $i\larc{}j\notin \lambda$ for all $j>i$, then for all $l>i$ and $a\in\FF_q^\times$,
$$\langle \Res_{U_K(q)}^{U_L(q)}(\chi^{\lambda}),\chi^{i\slarc{a}l}\rangle\neq 0\qquad\text{implies}\qquad \langle \Res_{U_K(q)}^{U_L(q)}(\chi^{\lambda}),\One\rangle\neq 0.$$
\item[(b)] If there exists $l\in K$ such that $k\larc{}l\notin \lambda$ for all $k<l$, then for all $i<l$ and $a\in \FF_q^\times$,
$$\langle \Res_{U_K(q)}^{U_L(q)}(\chi^{\lambda}),\chi^{i\slarc{a}l}\rangle\neq 0\qquad\text{implies}\qquad \langle \Res_{U_K(q)}^{U_L(q)}(\chi^{\lambda}),\One\rangle\neq 0.$$
\end{enumerate}
\end{lemma}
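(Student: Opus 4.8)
The plan is to prove part (a); part (b) follows by the obvious left--right symmetry (reflecting the index set $\{1,\dots,n\}$ and applying the corresponding statement for columns, or equivalently by passing to transposes). Fix $i\in K$ with $i\larc{}j\notin\lambda$ for all $j>i$, and suppose $\langle\Res^{U_L}_{U_K}(\chi^\lambda),\chi^{i\slarc{a}l}\rangle\neq0$ for some $l>i$, $a\in\FF_q^\times$. The idea is to continue the restriction/straightening process \emph{starting from the summand} $\chi^{i\slarc{a}l}$: the inner product being nonzero means, via Proposition~\ref{PosetToCoefficient}, that $\{i\larc{a}l\}\preceq\lambda'$ for some multiset $\lambda'$ obtained along a chain of conflict-resolutions from $\lambda$ — more precisely, that there is a path in $\cP$ from $\lambda$ to a multiset lying over $K$ having $\{i\larc{a}l\}$ as one of its arcs (the other arcs contributing the rest of $\mu$). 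I want to exhibit a single further step, or short sequence of steps, in $\cP$ that turns the arc $i\larc{a}l$ into the empty arc while keeping everything else intact, thereby producing a multiset over $K$ that is $\succeq\emptyset$ in the relevant component, and hence $\langle\Res^{U_L}_{U_K}(\chi^\lambda),\One\rangle\neq0$ by Proposition~\ref{PosetToCoefficient} again.

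The key observation that makes this work is the hypothesis that $i$ is not the \emph{left} endpoint of any arc of $\lambda$. I claim this persists: in every multiset $\nu$ appearing on a path in $\cP$ downward from $\lambda$, the vertex $i$ is a left endpoint of at most the arcs that were created along the way with left endpoint exactly $i$ — and inspecting the rules (PL), (PR), (PB), (PN), no resolution step ever \emph{creates} a new arc with a left endpoint strictly smaller than all pre-existing left endpoints at that position unless it is resolving a (CN)-type conflict that splits an arc $i'\larc{}k$ with $i'<i$, which would require $i'\notin K$; but then that arc sits \emph{below} $i$ in the nesting order and its resolution can only produce arcs with left endpoint $>i'$. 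The upshot is that once we have a summand $\chi^{i\slarc{a}l}$ with $i\in K$, the arc $i\larc{a}l$ can be treated in isolation: since $i\in K$ and (after restriction) $l$ is either in $K$ or not, we apply Theorem~\ref{ArcTensorProduct}'s (CR) case — or rather, we introduce a (CB) or (CN) conflict at $i$ artificially — no: cleaner is to use that the decomposition chain is \emph{not yet finished} if $i\larc{a}l$ with $i\in K$ but some intermediate vertex $j$ with $i<j<l$ is not in $K$; resolving the (CN) conflict at such a $j$ via Theorem~\ref{ArcRestriction} (the case $i\in K$, $l\notin K$ or $i,l\notin K$) always yields $\chi^\emptyset$ as a summand with positive coefficient. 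If instead $i,l\in K$ with no vertex of $L\setminus K$ strictly between them, then $i\larc{a}l$ was already legitimately present over $K$ — but this contradicts the construction, because $i\larc{}l\notin\lambda$ and $i$ is never a left endpoint along the path, so $i\larc{a}l$ must have been created by a split, forcing an intermediate non-$K$ vertex after all.

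Concretely, then, the steps are: (1) translate the hypothesis $\langle\Res^{U_L}_{U_K}(\chi^\lambda),\chi^{i\slarc{a}l}\rangle\neq0$ into the existence of a path in $\cP$ from $\lambda$ to a multiset $\mu$ over $K$ with $i\larc{a}l\in\mu$, using Proposition~\ref{PosetToCoefficient} and transitivity; (2) track, via the implicit injection $\iota\colon\mu\to\lambda$ noted in the Remark after the definition of $\cP$, that the arc $i\larc{a}l$ in $\mu$ comes from an arc $i'\larc{a'}l'$ of $\lambda$ with $i'\le i$ and, because $i$ is not a left endpoint in $\lambda$, in fact $i'<i$, so $i'\notin K$; (3) conclude that the restriction step eliminating $i'$ (or the last split-off vertex) has not been applied, i.e.\ we may apply one more instance of Theorem~\ref{ArcRestriction} at a vertex $j\in L\setminus K$ with $i\le j<l$ — such a $j$ exists precisely because the arc $i\larc{a}l$ was produced by shrinking $i'\larc{a'}l'$ across a non-$K$ vertex — obtaining $\chi^\emptyset$ with positive coefficient in the resulting decomposition; (4) invoke Proposition~\ref{PosetToCoefficient} once more to read off $\langle\Res^{U_L}_{U_K}(\chi^\lambda),\One\rangle\neq0$.

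I expect the main obstacle to be step~(2)--(3): pinning down rigorously \emph{why} a summand $\chi^{i\slarc{a}l}$ with $i\in K$ and $i\larc{}l\notin\lambda$ forces an unresolved non-$K$ vertex strictly between $i$ and $l$ — i.e.\ that the arc cannot have been ``born legitimate over $K$.'' This requires a careful invariant on paths in $\cP$ stating that a left endpoint never decreases below the minimum left endpoint available at that position, which in turn must be checked against all four resolution rules, with (PN) being the delicate one since it is the only rule that both creates arcs and interacts with vertices outside $K$. Once that invariant is in hand the rest is bookkeeping with Theorems~\ref{ArcRestriction} and~\ref{ArcTensorProduct} and Proposition~\ref{PosetToCoefficient}.
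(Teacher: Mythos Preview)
There is a genuine gap in step~(3). Once $\langle\Res^{U_L}_{U_K}(\chi^\lambda),\chi^{i\slarc{a}l}\rangle\neq 0$, the arc $i\larc{a}l$ already has \emph{both} endpoints in $K$ (otherwise $\chi^{i\slarc{a}l}$ is not a supercharacter of $U_K$), and every vertex of $L\setminus K$ has already been eliminated along the path in $\cP$. So there is no ``one more instance of Theorem~\ref{ArcRestriction}'' available: the single arc $\{i\larc{a}l\}\in\cS_K(q)$ has no conflicts over $K$ and is a terminal node in $\cP$. Your attempted rescue --- that the creation of $i\larc{a}l$ by shrinking $i'\larc{a'}l'$ must leave a non-$K$ vertex strictly between $i$ and $l$ --- is false: look at rule (PN). Resolving a (CN) conflict at $i'\notin K$ takes an arc $i'\larc{b}l'$ to $k\larc{c}l'$ with $i'<k$; in particular the eliminated vertex $i'$ lies \emph{to the left of} $i$, not in the open interval $(i,l)$. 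Nothing forces $(i,l)\cap(L\setminus K)\neq\emptyset$. Likewise the claim that ``$i$ is never a left endpoint along the path'' is simply the hypothesis about $\lambda$, not an invariant down the path; (PN) resolutions create left endpoints at $i$ freely, which is exactly how $i\larc{a}l$ appears.

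The paper's proof does something quite different from appending a step at the bottom of the path: it rewinds to the \emph{intermediate} step where the tracked arc $\varphi$ first acquires left endpoint $i$ (i.e.\ the moment $i'\larc{a''}l''\mapsto i\larc{a'}l'$), and \emph{rechooses} the image of $\varphi$ at that step --- either to $\emptyset$ or to an arc with left endpoint $j_1>i$ carefully selected so that $\varphi$ can later be annihilated via a (PB) move. The real work is then checking that all the other arcs that interacted with $\varphi$ downstream (classified into types (F1)--(F3), and then (G1)--(G4) for the secondary arcs) can still be steered to the same endpoints under the modified path. This is the ``technical'' part the paper warns about, and it is not avoidable by the end-of-path argument you sketch: you must go back and alter a choice mid-path, not add a new move after reaching $\{i\larc{a}l\}$.
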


\begin{proof}
(a) Suppose $\langle \Res_{U_K(q)}^{U_L(q)}(\chi^{\lambda}),\chi^{i\slarc{a}l}\rangle\neq 0$ for some $l>i$ and $a\in\FF_q^\times$.  By assumption there is a path from $\lambda$ to $\{i\larc{a}l\}$ in $\cP$.  Since there are injective maps $\iota$ at each step of the path we can track the arc that will eventually become $i\larc{a}l$ in this path. We will refer to this arc as arc $\vphi$ (regardless of its particular endpoints at a given point in the path).   In fact, given a fixed choice of injections, we can track all the original arcs and the conflicts they have with $\vphi$ along the way.   

Since $i\larc{}k\notin \lambda$ for all $k$, at some point arc $\vphi$ must have changed from $i'\larc{a''}l''\mapsto i\larc{a'}l'$ where $i'<i<l'\leq l''$.  At this point in the path we have the multiset $\{i\larc{a'}l'\}\cup \nu$ for some multiset $\nu$.  The arcs that interact with arc $\vphi$ the rest of the way come in three flavors:
\begin{enumerate}
\item[(F1)]  $i\larc{}k$ for some $k\leq l^*$, where $\vphi=i\larc{a^*}l^*$,
\item[(F2)]  $j\larc{}l^*$ for some $i<j$, where $\vphi=i\larc{a^*}l^*$,
\item[(F3)]  $h\larc{}l^*$ for some $h<i$, where $\vphi=i\larc{a^*}l^*$.
\end{enumerate}
Our goal  is to show that we can replace the image  $i\larc{a'}l'$ of $\vphi$ with another arc, without changing where  the other arcs (F1)--(F3) end up.  By being careful, the (F2)--(F3) arcs will not be affected (they act in exactly the same way), but we will need to deal with the (F1) arcs separately, below.

List all the (F2) arcs, $j_1\larc{a_1}l_1,j_2\larc{a_2}l_2,\ldots, j_r\larc{a_r}l_r$, where $j_1\leq j_2\leq \cdots \leq j_r$ and if $j_m=j_{m+1}$, then $l_m\geq l_{m+1}$.    Let $l_{i_1}\geq l_{i_2}\geq \ldots \geq l_{i_m}$ be the longest decreasing subsequence of $(l_1,l_2,\ldots, l_r)$ beginning at $l_{i_1}=l_1$ such that if $l_{i_m}=l_{i_{m+1}}$, then $j_{i_m}<j_{i_{m+1}}$.  At the stage where we sent $i'\larc{a''}l''\mapsto i\larc{a'}l'$, we could have sent 
$$\vphi=i'\larc{a''}l''\mapsto \left\{\begin{array}{ll}  j_1\larc{-a_1^*}l', & \text{if $j_1$ exists, $a_1^*=\wt_{j_1l_1}(\{j_1\larc{a_1}l_1,j_2\larc{a_2}l_2,\ldots, j_r\larc{a_r}l_r\})$,}\\  \emptyset & \text{otherwise.}\end{array}\right.$$ 
If $j_1$ exists, we will continue to let $\vphi$ interact with (F2) and (F3) arcs as before (except throughout we choose the label $-a_1^*$ for $\vphi$), until we get to the conflicts $(R_{j_1l_1},j_1\larc{-a_1^*}l_1)$.  Note that since $l_{i_2}<l_{i_{2}-1},\ldots,l_2$  and arcs shrink as we move down the poset,  the arcs $j_2\larc{a_2}l_2,\ldots, j_{i_{2}-1}\larc{a_{i_{2}-1}}l_{i_{2}-1}$ will already have interacted in the original fashion with our arc $\vphi$ before we resolved the conflicts  $(R_{j_1l_1},j_1\larc{-a_1^*}l_1)$.    We resolve the conflicts $(R_{j_1l_1},j_1\larc{-a_1^*}l_1)$ by sending each arc in $R_{j_1l_1}$ to  wherever it went in the original $(R_{j_1l_1},i\larc{a'}l_1)$-conflicts, and arc 
$$\vphi=j_1\larc{-a_1^*}l_1\mapsto\left\{\begin{array}{ll}  j_{i_2}\larc{-a_{i_2}^*}l_1 & \text{if $i_2$ exists, $a_{i_2}^*=\wt_{j_{i_2}l_{i_2}}(\{j_{i_2}\larc{a_{i_2}}l_{i_2},\ldots, j_r\larc{a_r}l_r\})$,}\\ \emptyset, & \text{otherwise.}\end{array}\right.$$ 
 We iterate this process and  all (F2) arcs act the same way as before, but there is no arc $\vphi$ in the end. 

Note that nothing has changed for the (F3) arcs in this process.  An example is probably instructive at this point.  Suppose that in the original path, $\vphi$ started as $i\larc{a}j_6$, and the path continues as 
$$
\begin{tikzpicture}[baseline=.4cm]
	\foreach \x in {1,...,6} 
		\node (\x) at (\x/2,0) [inner sep=0pt] {$\bullet$};
	\foreach \x in {2,...,6} 
		\node at (\x/2,-.25) {$\scs j_{\x}$};
	\node at (1/2,-.25) {$\scs i$};
	\node (11) at (1/2,1) {};
	\node (12) at (1/2,2) {};
	\draw (1) .. controls (2.25/2,1.5) and (4.75/2,1.5) .. node [above=-1pt] {$\scs a$} (6); 
	\draw (1) .. controls (1.75/2,1) and (3.25/2,1) .. node [above=-1pt,pos=.6] {$\scs b$} (4); 
	\draw (2) .. controls (2.5/2,.75) and (3.5/2,.75) .. node [above=-1pt,pos=.3] {$\scs c$} (4); 
	\draw[->] (6) .. controls (5.5/2,2) and (1.75/2,2.1) .. node [above] {$\scs d$} (12); 
	\draw[->] (5) .. controls (4.5/2,1.5) and (1.75/2,1.1) .. node [above,pos=.7] {$\scs e$} (11); 
\end{tikzpicture}
\hspace{-.1cm}\mapsto\hspace{-.1cm}
\begin{tikzpicture}[baseline=.4cm]
	\foreach \x in {1,...,6} 
		\node (\x) at (\x/2,0) [inner sep=0pt] {$\bullet$};
	\foreach \x in {2,...,6} 
		\node at (\x/2,-.25) {$\scs j_{\x}$};
	\node at (1/2,-.25) {$\scs i$};
	\node (11) at (1/2,1) {};
	\node (12) at (1/2,2) {};
	\draw (1) .. controls (2.25/2,1.5) and (4.75/2,1.5) .. node [above=-1pt] {$\scs a$} (6); 
	\draw (2) .. controls (2.5/2,.75) and (3.5/2,.75) .. node [above=-1pt,] {$\scs c$} (4); 
	\draw[->] (6) .. controls (5.5/2,2) and (1.75/2,2.1) .. node [above] {$\scs d$} (12); 
	\draw[->] (5) .. controls (4.5/2,1.5) and (1.75/2,1.1) .. node [above,pos=.7] {$\scs e$} (11); 
\end{tikzpicture}
\hspace{-.1cm}\mapsto\hspace{-.1cm}
\begin{tikzpicture}[baseline=.4cm]
	\foreach \x in {1,...,6} 
		\node (\x) at (\x/2,0) [inner sep=0pt] {$\bullet$};
	\foreach \x in {2,...,6} 
		\node at (\x/2,-.25) {$\scs j_{\x}$};
	\node at (1/2,-.25) {$\scs i$};
	\node (11) at (1/2,1) {};
	\node (12) at (1/2,2) {};
	\draw (1) .. controls (2/2,1.1) and (4/2,1.1) .. node [above=-1pt] {$\scs a'$} (5); 
	\draw (2) .. controls (2.5/2,.75) and (3.5/2,.75) .. node [above=-1pt,] {$\scs c$} (4); 
	\draw[->] (6) .. controls (5.5/2,2) and (1.75/2,2.1) .. node [above] {$\scs d$} (12); 
	\draw[->] (5) .. controls (4.5/2,1.5) and (1.75/2,1.1) .. node [above,pos=.7] {$\scs e$} (11); 
\end{tikzpicture}
\hspace{-.1cm}\mapsto\hspace{-.1cm}
\begin{tikzpicture}[baseline=.4cm]
	\foreach \x in {1,...,6} 
		\node (\x) at (\x/2,0) [inner sep=0pt] {$\bullet$};
	\foreach \x in {2,...,6} 
		\node at (\x/2,-.25) {$\scs j_{\x}$};
	\node at (1/2,-.25) {$\scs i$};
	\node (11) at (1/2,1) {};
	\node (12) at (1/2,2) {};
	\draw (1) .. controls (1.75/2,1) and (3.25/2,1) .. node [above=-1pt] {$\scs a''$} (4); 
	\draw (2) .. controls (2.5/2,.75) and (3.5/2,.75) .. node [above=-1pt,pos=.3] {$\scs c$} (4); 
	\draw[->] (6) .. controls (5.5/2,2) and (1.75/2,2.1) .. node [above] {$\scs d$} (12); 
	\draw[->] (5) .. controls (4.5/2,1.5) and (1.75/2,1.1) .. node [above,pos=.7] {$\scs e$} (11); 
\end{tikzpicture}
\hspace{-.1cm}\mapsto \hspace{-.1cm}
\begin{tikzpicture}[baseline=.4cm]
	\foreach \x in {1,...,6} 
		\node (\x) at (\x/2,0) [inner sep=0pt] {$\bullet$};
	\foreach \x in {2,...,6} 
		\node at (\x/2,-.25) {$\scs j_{\x}$};
	\node at (1/2,-.25) {$\scs i$};
	\node (11) at (1/2,1) {};
	\node (12) at (1/2,2) {};
	\draw (1) .. controls (1.75/2,1) and (3.25/2,1) .. node [above=-1pt] {$\scs a''$} (4); 
	\draw (2) .. controls (2.25/2,.5) and (2.75/2,.5) .. node [above=-1pt] {$\scs c'$} (3); 
	\draw[->] (6) .. controls (5.5/2,2) and (1.75/2,2.1) .. node [above] {$\scs d$} (12); 
	\draw[->] (5) .. controls (4.5/2,1.5) and (1.75/2,1.1) .. node [above,pos=.7] {$\scs e$} (11); 
\end{tikzpicture},
$$
where $i\larc{b}j_4$ is (F1), $j_2\larc{c}j_4$ is (F2) and the other two arcs ending at $j_5$ and $j_6$ are (F3).  The above algorithm suggests that if we had instead mapped $\vphi$ to $j_2\larc{-c} j_6$ instead of $i\larc{a}j_6$, we obtain
$$\begin{tikzpicture}[baseline=.4cm]
	\foreach \x in {1,...,6} 
		\node (\x) at (\x/2,0) [inner sep=0pt] {$\bullet$};
	\foreach \x in {2,...,6} 
		\node at (\x/2,-.25) {$\scs j_{\x}$};
	\node at (1/2,-.25) {$\scs i$};
	\node (11) at (1/2,1) {};
	\node (12) at (1/2,2) {};
	\draw (2) .. controls (3/2,1.25) and (5/2,1.25) .. node [above=-1pt] {$\scs -c$} (6); 
	\draw (1) .. controls (1.75/2,1) and (3.25/2,1) .. node [above=-2pt] {$\scs b$} (4); 
	\draw (2) .. controls (2.5/2,.75) and (3.5/2,.75) .. node [below=-1pt] {$\scs c$} (4); 
	\draw[->] (6) .. controls (5.5/2,2) and (1.75/2,2.1) .. node [above] {$\scs d$} (12); 
	\draw[->] (5) .. controls (4.5/2,1.5) and (1.75/2,1.1) .. node [above,pos=.7] {$\scs e$} (11); 
\end{tikzpicture}
\hspace{-.1cm}\mapsto\hspace{-.1cm}
\begin{tikzpicture}[baseline=.4cm]
	\foreach \x in {1,...,6} 
		\node (\x) at (\x/2,0) [inner sep=0pt] {$\bullet$};
	\foreach \x in {2,...,6} 
		\node at (\x/2,-.25) {$\scs j_{\x}$};
	\node at (1/2,-.25) {$\scs i$};
	\node (11) at (1/2,1) {};
	\node (12) at (1/2,2) {};
	\draw (2) .. controls (2.75/2,1.1) and (4.25/2,1.1) .. node [above=-2pt,pos=.4] {$\scs -c$} (5); 
	\draw (1) .. controls (1.75/2,1) and (3.25/2,1) .. node [above=-2pt] {$\scs b$} (4); 
	\draw (2) .. controls (2.5/2,.75) and (3.5/2,.75) .. node [below=-1pt] {$\scs c$} (4); 
	\draw[->] (6) .. controls (5.5/2,2) and (1.75/2,2.1) .. node [above] {$\scs d$} (12); 
	\draw[->] (5) .. controls (4.5/2,1.5) and (1.75/2,1.1) .. node [above,pos=.7] {$\scs e$} (11); 
\end{tikzpicture}
\hspace{-.1cm}\mapsto\hspace{-.1cm}
\begin{tikzpicture}[baseline=.4cm]
	\foreach \x in {1,...,6} 
		\node (\x) at (\x/2,0) [inner sep=0pt] {$\bullet$};
	\foreach \x in {2,...,6} 
		\node at (\x/2,-.25) {$\scs j_{\x}$};
	\node at (1/2,-.25) {$\scs i$};
	\node (11) at (1/2,1) {};
	\node (12) at (1/2,2) {};
	\draw (2) .. controls (2.75/2,1.15) and (3.25/2,1.15) .. node [above=-2pt,pos=.4] {$\scs -c$} (4); 
	\draw (1) .. controls (1.75/2,1) and (3.25/2,1) .. node [above=-2pt,pos=.4] {$\scs b$} (4); 
	\draw (2) .. controls (2.5/2,.75) and (3.5/2,.75) .. node [below=-1pt] {$\scs c$} (4); 
	\draw[->] (6) .. controls (5.5/2,2) and (1.75/2,2.1) .. node [above] {$\scs d$} (12); 
	\draw[->] (5) .. controls (4.5/2,1.5) and (1.75/2,1.1) .. node [above,pos=.7] {$\scs e$} (11); 
\end{tikzpicture}
\hspace{-.1cm}\mapsto\hspace{-.1cm}
\begin{tikzpicture}[baseline=.4cm]
	\foreach \x in {1,...,6} 
		\node (\x) at (\x/2,0) [inner sep=0pt] {$\bullet$};
	\foreach \x in {2,...,6} 
		\node at (\x/2,-.25) {$\scs j_{\x}$};
	\node at (1/2,-.25) {$\scs i$};
	\node (11) at (1/2,1) {};
	\node (12) at (1/2,2) {};
	\draw (1) .. controls (1.75/2,1) and (3.25/2,1) .. node [above=-2pt] {$\scs b$} (4); 
	\draw (2) .. controls (2.25/2,.5) and (2.75/2,.5) .. node [above=-1pt] {$\scs c'$} (3); 
	\draw[->] (6) .. controls (5.5/2,2) and (1.75/2,2.1) .. node [above] {$\scs d$} (12); 
	\draw[->] (5) .. controls (4.5/2,1.5) and (1.75/2,1.1) .. node [above,pos=.7] {$\scs e$} (11); 
\end{tikzpicture}.
$$
Note that the (F2) and (F3) arcs act the same as before.  However, we are left with the unwanted (F1) arc $i\larc{b}j_4$.

Thus, in making the change $i'\larc{a''}l''\mapsto j_1\larc{-a_1^*}l'$, the (F1) conflicts cannot resolve in the same way.  Suppose $i\larc{b}k$ is a type (F1) conflict, and call it arc $\tau$.  Since $i\larc{}k\notin \lambda$ for any $k$, $\tau$ must have come from some $i'\larc{b'}k'$; we will use this to send $\tau$ to a different arc in a way similar to how we dealt with $\vphi$ above.   The arcs that $\tau$ interacts with the rest of way come in four guises:
\begin{enumerate}
\item[(G1)] $i\larc{c}j$ with $j\leq k^*$, where $\tau=i\larc{b^*} k^*$,
\item[(G2)] $i^*\larc{c}j$ with $i^*>i$, $j\leq k^*$, where $\tau=i^*\larc{b^*}k^*$,
\item[(G3)]  $j\larc{c}k^*$ with $j>i^*$, where $\tau=i^*\larc{b^*}k^*$,
\item[(G4)] $i^*\larc{c}l$ with  $k^*<l$ or  $h\larc{c}k^*$ with $h<i^*$, where $\tau=i^*\larc{b^*}k^*$,
\end{enumerate}
WLOG we may ignore the (G1) type conflicts.  That is, since they have the same properties as $\tau$, we may apply the same strategy to them as we do to $\tau$.  

Let $i_1\larc{b_1}k_1,i_2\larc{b_2}k_2,\ldots, i_\ell\larc{b_\ell}k_\ell$ be the set of (G2) and (G3) type conflict arcs, where $i_1\leq i_2\leq \cdots \leq i_\ell$ and if $i_m=i_{m+1}$, then $k_{m+1}\leq k_m$.   Since $i\larc{}k\notin \lambda$ for any $k$, $\tau$ must have come from some $i'\larc{b'}k'$.  Instead of sending $i'\larc{b'}k'\mapsto i\larc{b}k$, send 
 $$ \vphi=i'\larc{b'}k' \mapsto \left\{\begin{array}{ll} \emptyset, & \text{if $i_1$ does not exist,}\\
i_1\larc{b^*}k , & \text{if $i_1\larc{b_1} k_1$ is type (G2), $(i\larc{a^*}l^*,i\larc{b}k)$ originally mapped to $(i\larc{a^*}l^*,i_1\larc{b^*}k)$,}\\
i_1\larc{-b_1^*}k , & \text{if $i_1\larc{b_1} k_1$ is type (G3), $b_1^*=\wt_{i_1l_1}(\{i_1\larc{b_1}k_1,i_2\larc{b_2}k_2,\ldots, i_\ell\larc{b_\ell}k_\ell\})$.}
 \end{array}\right.$$
In the first case, we are done.  In the second case, the remaining conflicts can be resolved in the same way as before (we just skipped a step).  In the third case, we will eventually have the conflicts $(R_{i_1k_1},i_1\larc{-b_1^*}k_1)$ which we resolve by sending each arc $i_1\larc{}k_1$ in $R_{i_1k_1}$  to wherever it went in the original conflict $(R_{i_1k_1},i^*\larc{b^*}k^*)$, and we iterate the process for $i_1\larc{-b_1} k_1$ with the remaining conflicts $\{i_m\larc{b_m}k_m\mid m>1,k_m\leq k_1\}$.

The type (G4) conflicts interact with $\tau$ the same way as before.

Thus, by changing our choices slightly, the arcs all behave in a similar way except that we end up without the $i\larc{a}l$ arc at the end.

(b) The proof is symmetric.
\end{proof}

Given $\lambda\in \cS_L(q)$, the following lemma allows us to discard all the arcs that have exactly one endpoint in $K$ without changing whether the resulting restriction has a trivial character in its decomposition.

\begin{lemma} \label{NoHalves}
Let  $K\subseteq L\subseteq \ZZ_{\geq 1}$ be finite sets and let $\lambda\in \cS_L(q)$.   Suppose there exists $i\larc{a}l\in \lambda$ such that $|\{i,l\}\cap K|=1$.  Then
$$\langle\Res_{U_K(q)}^{U_L(q)}(\chi^\lambda),\One\rangle=0\qquad\text{if and only if}\qquad \langle\Res_{U_K(q)}^{U_L(q)}(\chi^{\lambda-\{i\slarc{a}l\}}),\One\rangle=0.$$
\end{lemma}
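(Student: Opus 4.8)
The plan is to reduce the statement about restriction to $U_K$ to a cleaner statement about restriction to $U_{K \cup \{i\}}$ or $U_{K \cup \{l\}}$ (whichever of $i,l$ lies outside $K$), using transitivity of restriction together with Theorem~\ref{ArcRestriction}. Say without loss of generality $i \in K$, $l \notin K$; the other case is symmetric. Set $M = K \cup \{l\}$, so $K \subseteq M \subseteq L$ and $i\larc{a}l \in \cA_M(q)$. The key observation is that $\chi^\lambda = \chi^{i\slarc{a}l} \otimes \chi^{\lambda - \{i\slarc{a}l\}}$ as characters of $U_M$, and by Theorem~\ref{ArcRestriction} (case $i \in K$, $l \notin K$ applied with $M$ in place of $K$) the restriction $\Res^{U_L}_{U_M}(\chi^{i\slarc{a}l})$ is $q^{r}$ times $\chi^\emptyset + \sum_{i<k<l,\, k\in M,\, b} \chi^{i\slarc{b}k}$ — but since $M \setminus K = \{l\}$ and $i<k<l$ forces $k \in K$, the arcs $i\larc{b}k$ appearing all have both endpoints in $K$. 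Then I would further restrict from $U_M$ to $U_K$: since $l$ is the unique element of $M \setminus K$ and every arc of $\lambda - \{i\slarc{a}l\}$ that touches $l$ must be resolved, while the arcs $i\larc{b}k$ coming from the first factor already lie in $\cA_K(q)$, I can compare the $\One$-coefficient on both sides.

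The heart of the argument is Lemma~\ref{MainLemma}. For the forward implication (contrapositive: if $\langle\Res^{U_L}_{U_K}(\chi^{\lambda-\{i\slarc{a}l\}}),\One\rangle \neq 0$ then $\langle\Res^{U_L}_{U_K}(\chi^\lambda),\One\rangle \neq 0$), I would argue that restricting $\chi^\lambda = \chi^{i\slarc{a}l}\otimes\chi^{\lambda-\{i\slarc{a}l\}}$ from $U_L$ to $U_K$ factors through first pulling out the contribution of $\chi^{i\slarc{a}l}$; the $\chi^\emptyset$ summand in $\Res(\chi^{i\slarc{a}l})$ multiplies against $\Res^{U_L}_{U_K}(\chi^{\lambda-\{i\slarc{a}l\}})$ (up to the harmless positive power $q^r$), and since $\One$ already appears there it appears in the product. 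Conversely, suppose $\langle\Res^{U_L}_{U_K}(\chi^\lambda),\One\rangle \neq 0$. Now in $\lambda - \{i\slarc{a}l\}$ the vertex $i$ (if $i\in K$) may fail the hypothesis of Lemma~\ref{MainLemma}(a) because another arc of $\lambda$ could start at $i$; but $\lambda$ is a $q$-set partition, so at most one arc starts at $i$, namely $i\larc{a}l$ itself, hence in $\lambda - \{i\slarc{a}l\}$ no arc starts at $i$. Similarly for $l$: in $\lambda-\{i\slarc{a}l\}$ no arc ends at $l$ (but $l\notin K$ so that version of the lemma does not directly apply; the relevant vertex is $i\in K$). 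So Lemma~\ref{MainLemma}(a) applies to $\mu := \lambda - \{i\slarc{a}l\}$ at the vertex $i \in K$: to conclude I need to produce \emph{some} arc $i\larc{a'}l'$ with $\langle\Res^{U_L}_{U_K}(\chi^\mu),\chi^{i\slarc{a'}l'}\rangle \neq 0$. Here I would use Proposition~\ref{PosetToCoefficient} and the conflict poset: from $\langle\Res^{U_L}_{U_K}(\chi^\lambda),\One\rangle \neq 0$ there is a path in $\cP$ from $\lambda$ down to $\emptyset$ over $K$; I track the arc $i\larc{a}l$ along this path via the injections $\iota$, and at the last moment before it disappears it is some arc $i^*\larc{a^*}l^*$. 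The surrounding multiset at that stage, with $i^*\larc{a^*}l^*$ removed, is reachable from $\mu$ in $\cP$ (this needs checking: the same sequence of conflict-resolutions, omitting the steps involving $\varphi$, is a valid path — essentially the content built into the proof of Lemma~\ref{MainLemma}), which gives $\langle\Res^{U_L}_{U_K}(\chi^\mu),\chi^{i^*\slarc{a^*}l^*}\rangle \neq 0$. If $i^* = i$ we are done by Lemma~\ref{MainLemma}(a); if the arc's left endpoint has drifted to some $i^* > i$, I would instead appeal to the symmetric statement on the right endpoint or, more cleanly, choose the bookkeeping in the path so that $\varphi$'s left endpoint stays put (which is automatic: in $\cP$, left endpoints of tracked arcs only ever increase through (PL)/(PN) steps, and since no arc in $\mu$ shares the vertex $i$ there is never a (PL) conflict forcing $\varphi$ off $i$ until it is removed).

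The main obstacle I anticipate is exactly this last bookkeeping: ensuring that when we delete the arc $i\larc{a}l$ and its trace $\varphi$ from a descending path in $\cP$, what remains is still a legitimate path from $\mu = \lambda - \{i\slarc{a}l\}$ to a multiset that has $i\larc{a'}l'$ as a summand — i.e., that no conflict-resolution step in the original path genuinely \emph{required} $\varphi$'s presence. This is plausible because $\varphi$ always occupies endpoint $i$ (or the trace of $l$) and $\lambda$ being a set partition means the original arcs never conflict at $i$ with anything but $\varphi$; but making it precise requires the same careful case analysis on conflict types (F1)--(F3)/(G1)--(G4) that appears in the proof of Lemma~\ref{MainLemma}, essentially run in reverse. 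I would structure the write-up to quote Lemma~\ref{MainLemma} as a black box wherever possible and isolate this path-surgery as the one genuinely new step.
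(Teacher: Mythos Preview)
Your overall strategy --- split off the arc $i\larc{a}l$, use positivity for one direction, and invoke Lemma~\ref{MainLemma} for the other --- is exactly what the paper does. But you are making both halves substantially harder than they need to be, and the hard step you flag (the path surgery) is entirely avoidable.

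First, the detour through $M = K\cup\{l\}$ is unnecessary. Theorem~\ref{ArcRestriction} already computes $\Res^{U_L}_{U_K}(\chi^{i\slarc{a}l})$ directly: with $i\in K$, $l\notin K$ it gives
\[
\Res^{U_L}_{U_K}(\chi^{i\slarc{a}l}) \;=\; q^{r_K^L(i\slarc{a}l)}\Big(\chi^\emptyset + \sum_{\substack{i<j<l\\ j\in K,\ b\in\FF_q^\times}} \chi^{i\slarc{b}j}\Big).
\]
Tensoring with $\Res^{U_L}_{U_K}(\chi^{\lambda-\{i\slarc{a}l\}})$ and pairing against $\One$ gives a single identity that handles both directions at once.

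The forward direction is then immediate from the $\chi^\emptyset$ summand, as you say. For the converse, you already have the arc you need: if $\langle\Res(\chi^\lambda),\One\rangle\neq 0$, then some term on the right is nonzero, so either $\langle\Res(\chi^{\lambda-\{i\slarc{a}l\}}),\One\rangle\neq 0$ (done) or
\[
0\neq \big\langle \Res(\chi^{\lambda-\{i\slarc{a}l\}})\otimes \chi^{i\slarc{b}j},\One\big\rangle = \big\langle \Res(\chi^{\lambda-\{i\slarc{a}l\}}),\chi^{i\slarc{-b}j}\big\rangle
\]
for some $j\in K$ with $i<j<l$ and some $b\in\FF_q^\times$. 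That last inner product is exactly the hypothesis of Lemma~\ref{MainLemma}(a) applied to $\lambda-\{i\slarc{a}l\}$ at the vertex $i$ (which, as you correctly observe, has no outgoing arc since $\lambda$ was a $q$-set partition). So Lemma~\ref{MainLemma} finishes the argument in one line.

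In other words, the ``path-surgery'' you anticipate as the main obstacle never has to happen: the expansion of $\Res(\chi^{i\slarc{a}l})$ hands you an arc with left endpoint $i$ for free, and Lemma~\ref{MainLemma} is then a black box. Your proposed route through the conflict poset --- tracking $\varphi$, deleting it from a descending chain, and checking that what remains is still a chain --- would essentially reprove the content of Lemma~\ref{MainLemma} in this special case, and the bookkeeping you worry about (what if a conflict resolution genuinely requires $\varphi$'s presence) is real and nontrivial. Drop it; the expansion formula already does the work.
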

\begin{proof}
WLOG assume that $l\notin K$.  Suppose $\langle\Res_{U_K(q)}^{U_L(q)}(\chi^{\lambda-\{i\slarc{a}l\}}),\One\rangle\neq 0$. Then by Theorem \ref{ArcRestriction},
\begin{align*}
\langle\Res_{U_K}^{U_L}(\chi^\lambda),&\One\rangle
=\langle\Res_{U_K}^{U_L}(\chi^{\lambda-\{i\slarc{a}l\}})\otimes \Res_{U_K}^{U_L}(\chi^{i\slarc{a}l}),\One\rangle\\
&=q^{r_K^L(i\slarc{a}l)}\bigg(\langle\Res_{U_K}^{U_L}(\chi^{\lambda-\{i\slarc{a}l\}})\otimes\One,\One\rangle+\hspace{-.35cm}\sum_{i<j<l\atop j\in K, b\in\FF_q^\times} \hspace{-.25cm}\langle\Res_{U_K}^{U_L}(\chi^{\lambda-\{i\slarc{a}l\}})\otimes\chi^{i\slarc{b}j},\One\rangle\bigg)\\ 
&>0.
\end{align*}

Conversely, suppose $\langle\Res_{U_K}^{U_L}(\chi^\lambda),\One\rangle\neq 0$.  Then 
$$q^{r_K^L(i\slarc{a}l)}\bigg(\langle\Res_{U_K}^{U_L}(\chi^{\lambda-\{i\slarc{a}l\}})\otimes\One,\One\rangle+\hspace{-.35cm}\sum_{i<j<l\atop j\in K, b\in\FF_q^\times} \hspace{-.25cm}\langle\Res_{U_K}^{U_L}(\chi^{\lambda-\{i\slarc{a}l\}})\otimes\chi^{i\slarc{b}j},\One\rangle\bigg)\neq 0.$$
Thus, either $\langle \Res_{U_K}^{U_L}(\chi^{\lambda-\{i\slarc{a}l\}}),\One\rangle\neq 0$ or there exists $i<j<l$ and $b\in \FF_q^\times$ such that
$$\langle \Res_{U_K}^{U_L}(\chi^{\lambda-\{i\slarc{a}l\}}),\chi^{i\slarc{b}j}\rangle\neq 0.$$
However, in the latter case Lemma \ref{MainLemma} implies that $\langle \Res_{U_K}^{U_L}(\chi^{\lambda-\{i\slarc{a}l\}}),\One\rangle\neq 0.$
\end{proof}

We can now prove the main theorem.

\begin{proof}[Proof of Theorem \ref{MainTheorem}] 
By Lemma \ref{NoHalves}, we may assume that for all $i\larc{a}l\in \lambda$, either $i,l\in K$ or  $i,l\notin K$ (else, we could remove the other arcs without affecting the whether the coefficient of $\One$ is nonzero).  

Suppose that $\Gamma_K(\lambda)$ has an appropriate matching from $V_\bullet$ to $V_\circ$.   Then
$$\Res_{U_K}^{U_L}(\chi^\lambda)=\bigotimes_{\text{Matched pairs}\atop (j\slarc{b}k,i\slarc{a}l)\in V_\bullet\times V_\circ} \Res_{U_K}^{U_L}(\chi^{\{j\slarc{b}k,i\slarc{a}l\}})\otimes \bigotimes_{\text{Unmatched}\atop i\slarc{a}l\in V_\circ} \Res_{U_K}^{U_L} (\chi^{i\slarc{a}l}).
$$ 
For each matched pair, by Theorem \ref{ArcTensorProduct},
\begin{align*}
\Res_{U_K}^{U_L}(\chi^{\{j\slarc{b}k,i\slarc{a}l\}})
&=\Res_{U_K}^{U_L}(\chi^{j\slarc{b}k})\otimes \Res_{U_K}^{U_L}(\chi^{i\slarc{a}l})\\
&=q^{|\{j<j'<k\mid j'\notin K\}|} \chi^{j\slarc{b}k}\otimes \bigg(c_\one^{i\slarc{a}l}\One+\sum_{i<i'<l'<l\atop i',l'\in K,a'\in \FF_q^\times} c_{i'\slarc{a'}l'}^{i\slarc{a}l}\chi^{i'\slarc{a'}l'}\bigg)
\end{align*}
where $c_{i'\slarc{a'}l'}^{i\slarc{a}l},c_\one^{i\slarc{a}l}\in \ZZ_{>0}$.  Since $i<j<k<l$, $\chi^{j\slarc{-b}k}$ is a nonzero summand of $\Res_{U_K}^{U_L}(\chi^{i\slarc{a}l})$.  Since $\One$ is a nonzero summand of $\chi^{\{j\slarc{b}k,j\slarc{-b}k\}}$,
$$\langle \bigotimes_{\text{Matched pairs}\atop (j\slarc{b}k,i\slarc{a}l)\in V_\bullet\times V_\circ} \Res_{U_K}^{U_L}(\chi^{\{j\slarc{b}k,i\slarc{a}l\}}),\One\rangle\neq 0.$$
By Theorem \ref{ArcRestriction}, each unmatched term $\Res_{U_K}^{U_L} (\chi^{i\slarc{a}l})$ has $\One$ as a nonzero summand, so 
$$\langle\bigotimes_{\text{Unmatched}\atop i\slarc{a}l\in V_\circ} \Res_{U_K}^{U_L} (\chi^{i\slarc{a}l}),\One\rangle\neq 0.$$
Thus, 
$$\langle\Res_{U_K}^{U_L}(\chi^\lambda),\One\rangle\neq 0.$$

Conversely, suppose that 
$$\langle\Res_{U_K}^{U_L}(\chi^\lambda),\One\rangle\neq 0.$$
Write $\lambda=\mu\cup \nu$, where
\begin{align*}
\mu &= V_\circ = \{i\larc{a}l\in \lambda\mid i,l\notin K\}\\
\nu &=V_\bullet = \{j\larc{b}k\in \lambda\mid j,k\in K\}.
\end{align*}
By Theorem \ref{ArcRestriction} and (\ref{AlreadyDownArcsRestriction}),
\begin{equation*}
\Res_{U_K}^{U_L}(\chi^\lambda)=\Res_{U_K}^{U_L}(\chi^\mu)\otimes\Res_{U_K}^{U_L}(\chi^\nu)= \Res_{U_K}^{U_L}(\chi^\mu)\otimes q^{r_K^L(\lambda)} \chi^\nu,
\end{equation*}
since $\nu\in \cS_K(q)\cap\cS_L(q)$.  Since $\langle\Res_{U_K}^{U_L}(\chi^\lambda),\One\rangle\neq 0$, 
$$\langle \Res_{U_K}^{U_L}(\chi^\mu), \chi^{\bar{\nu}}\rangle=\langle \Res_{U_K}^{U_L}(\chi^\mu)\otimes \chi^{\nu},\One\rangle\neq 0.$$
However, Proposition \ref{PosetToCoefficient} implies $\bar{\nu}\preceq \lambda$, so there exists an injective function $\iota:\nu\rightarrow \mu$, giving the appropriate matching for $\Gamma_K(\lambda)$.
\end{proof}

\section{Tensor products and general restriction coefficients}

Theorem \ref{MainTheorem} in fact is sufficiently strong that analogous statements can be made for the coefficients of arbitrary supercharacters in the decomposition of tensor products and restrictions.  This section begins by developing the appropriate generalization to the graph $\Gamma_K(\lambda)$.  We then prove Theorem \ref{TensorResult} for tensor products and Theorem \ref{RestrictionResult} for restrictions.  Along the way, Theorem \ref{StraighteningTheorem} describes how characters corresponding to multisets are the same as restrictions from certain set partitions (up to a scalar multiple).

\subsection{A generalized bipartite graph} \label{GeneralGraphSection}

Given $\lambda\in \cM(q)$, perturb the arcs such that they stack on top of one-another in the following fashion.  
\begin{enumerate}
\item[(TL)]  If $i\larc{a}j,i\larc{b}k\in \lambda$ with $j<k$, then the left endpoint of $i\larc{b}k$ is above the left endpoint of $i\larc{a}j$,
$$\begin{tikzpicture}[baseline=.4cm]
	\node (i) at (0,0) [inner sep=0pt] {$\bullet$};
	\node at (0,-.25) {$i$};
	\node (j) at (1,0) [inner sep=0pt] {$\bullet$};
	\node at (1,-.25) {$j$};
	\node (k) at (3,0) [inner sep=0pt] {$\bullet$};
	\node at (3,-.25) {$k$};
	\draw (i) .. controls (.25,.5) and (.75,.5) .. node [above=-1pt,pos=.7] {$\scs a$}  (j); 
	\draw (i) .. controls (.75,1.25) and (2.25,1.25) .. node [above] {$\scs b$}  (k);
\end{tikzpicture} 
\mapsto
\begin{tikzpicture}[baseline=.4cm]
	\node (i) at (0,0) [inner sep=0pt] {$\bullet$};
	\node (i1) at (0,.25) [inner sep=0pt] {$\bullet$};
	\node at (0,-.25) {$i$};
	\node (j) at (1,0) [inner sep=0pt] {$\bullet$};
	\node at (1,-.25) {$j$};
	\node (k) at (3,0) [inner sep=0pt] {$\bullet$};
	\node at (3,-.25) {$k$};
	\draw (i) .. controls (.25,.5) and (.75,.5) .. node [above=-1pt,pos=.6] {$\scs a$}  (j); 
	\draw (i1) .. controls (.75,1.25) and (2.25,1.25) .. node [above] {$\scs b$}  (k);
\end{tikzpicture}. 
$$
\item[(TR)] If $i\larc{a}k,j\larc{b}k\in\lambda$ with $i<j$, then the right endpoint of $i\larc{a}k$ is above the right endpoint of $j\larc{b}k$,
$$\begin{tikzpicture}[baseline=.4cm]
	\node (i) at (0,0) [inner sep=0pt] {$\bullet$};
	\node at (0,-.25) {$i$};
	\node (j) at (2,0) [inner sep=0pt] {$\bullet$};
	\node at (2,-.25) {$j$};
	\node (k) at (3,0) [inner sep=0pt] {$\bullet$};
	\node at (3,-.25) {$k$};
	\draw (j) .. controls (2.25,.5) and (2.75,.5) .. node [above=-1pt,pos=.3] {$\scs b$}  (k); 
	\draw (i) .. controls (.75,1.25) and (2.25,1.25) .. node [above] {$\scs a$}  (k);
\end{tikzpicture} 
\mapsto
\begin{tikzpicture}[baseline=.4cm]
	\node (i) at (0,0) [inner sep=0pt] {$\bullet$};
	\node at (0,-.25) {$i$};
	\node (j) at (2,0) [inner sep=0pt] {$\bullet$};
	\node at (2,-.25) {$j$};
	\node (k) at (3,0) [inner sep=0pt] {$\bullet$};
	\node (k1) at (3,.25) [inner sep=0pt] {$\bullet$};
	\node at (3,-.25) {$k$};
	\draw (j) .. controls (2.25,.5) and (2.75,.5) .. node [above=-1pt,pos=.3] {$\scs b$}  (k); 
	\draw (i) .. controls (.75,1.25) and (2.25,1.25) .. node [above] {$\scs a$}  (k1);
\end{tikzpicture} .$$
\item[(TB)] If $|R_{jk}|>1$, then
$$R_{jk}=
\begin{tikzpicture}[baseline=.6cm]
	\node (i) at (0,0) [inner sep=0pt] {$\bullet$};
	\node at (0,-.25) {$j$};
	\node (j) at (3,0) [inner sep=0pt] {$\bullet$};
	\node at (3,-.25) {$k$};
	\node at (1.5,1.6) {$\scs\vdots$};
	\draw (i) .. controls (.75,.5) and (2.25,.5) .. node [above] {$\scs a_1$}  (j); 
	\draw (i) .. controls (.75,1) and (2.25,1) .. node [above] {$\scs a_2$}  (j);
	\draw (i) .. controls (.75,2.5) and (2.25,2.5) .. node [above] {$\scs a_l$}  (j); 
\end{tikzpicture} \mapsto
 \left\{
\begin{array}{ll} 
\begin{tikzpicture}[baseline=.6cm]
	\foreach \x in {1,2,5} 
		\node (i\x) at (0,\x/4) [inner sep=0pt] {$\bullet$};
	\node at (0,0) {$j$};
	\foreach \y in {1,2,5} 
		\node (j\y) at (3,\y/4) [inner sep=0pt] {$\bullet$};
	\node at (3,0) {$k$};
	\node at (1.5,1.6) {$\scs\vdots$};
	\draw (i1) .. controls (.75,.5) and (2.25,.5) .. node [above] {$\scs a_1$}  (j1); 
	\draw (i2) .. controls (.75,1) and (2.25,1) .. node [above] {$\scs a_2$}  (j2);
	\draw (i5) .. controls (.75,2) and (2.25,2) .. node [above] {$\scs a_l$}  (j5); 
\end{tikzpicture} 
, & \text{if $\wt_{jk}(\lambda)\neq 0$,}\\
\begin{tikzpicture}[baseline=1cm]
	\foreach \x in {1,2,5,7,8} 
		\node (i\x) at (0,\x/4) [inner sep=0pt] {$\bullet$};
	\node at (0,0) {$j$};
	\foreach \y in {1,2,5,7,8} 
		\node (j\y) at (3,\y/4) [inner sep=0pt] {$\bullet$};
	\node at (3,0) {$k$};
	\node at (1.5,1.6) {$\scs\vdots$};
	\draw (i1) .. controls (.75,.5) and (2.25,.5) .. node [above] {$\scs a_1$}  (j1); 
	\draw (i2) .. controls (.75,1) and (2.25,1) .. node [above] {$\scs a_2$}  (j2);
	\draw (i5) .. controls (.75,2) and (2.25,2) .. node [above] {$\scs a_{l-2}$}  (j5); 
	\draw (i7) .. controls (.75,2.5) and (2.25,2.5) .. node [above,near end] {$\scs a_l$}  (j8); 
	\draw (i8) .. controls (.75,2.5) and (2.25,2.5) .. node [above,near start] {$\scs a_{l-1}$}  (j7); 	
\end{tikzpicture} 
, & \text{if $\wt_{jk}(\lambda)= 0$.}
\end{array}\right.
$$
\end{enumerate}
\begin{example}
For $q=3$,
$$\lambda=
\begin{tikzpicture}[baseline=.1cm]
	\foreach \x in {1,...,5} 
		\node (\x) at (\x,0) [inner sep=0pt] {$\bullet$};
	\foreach \x in {1,...,5} 
		\node at (\x,-.25) {$\scs\x$};
	\draw (1) .. controls (1.75,1.25) and (3.25,1.25) ..  node [auto] {$\scs 1$} (4); 
	\draw (1) .. controls (1.75,1.75) and (3.25,1.75) .. node [auto] {$\scs 2$} (4); 
	\draw (2) .. controls (2.25,.5) and (2.75,.5) .. node [auto] {$\scs 2$} (3); 
	\draw (3) .. controls (3.25,.5) and (3.75,.5) .. node [auto,swap] {$\scs 2$} (4); 
	\draw (3) .. controls (3.5,1) and (4.5,1) .. node [auto] {$\scs 1$} (5); 
	\draw (3) .. controls (3.5,1.5) and (4.5,1.5) .. node [auto] {$\scs 1$}  (5);
\end{tikzpicture}
\mapsto
\begin{tikzpicture}[baseline=.1cm]
	\foreach \x in {1,...,5} 
		\node (\x) at (\x,0) [inner sep=0pt] {$\bullet$};
	\foreach \z in {1,3,4,5}
		\node (\z1) at (\z,1/4) [inner sep=0pt] {$\bullet$};
	\foreach \w in {3,4}
		\node (\w2) at (\w,1/2) [inner sep=0pt] {$\bullet$};
	\foreach \x in {1,...,5} 
		\node at (\x,-.25) {$\scs\x$};
	\draw (11) .. controls (1.75,1.25) and (3.25,1.25) ..  node [auto] {$\scs 1$} (41); 
	\draw (1) .. controls (1.75,1.75) and (3.25,1.75) .. node [auto] {$\scs 2$} (42); 
	\draw (2) .. controls (2.25,.5) and (2.75,.5) .. node [auto] {$\scs 2$} (3); 
	\draw (3) .. controls (3.25,.5) and (3.75,.5) .. node [above=-1.5pt] {$\scs 2$} (4); 
	\draw (31) .. controls (3.5,1) and (4.5,1) .. node [auto] {$\scs 1$} (5); 
	\draw (32) .. controls (3.5,1.5) and (4.5,1.5) .. node [auto] {$\scs 1$}  (51);
\end{tikzpicture}
$$
\end{example}

Define a labeling function $\Lambda_K:\lambda\rightarrow \{(\bullet,\bullet),(\circ,\bullet),(\bullet,\circ),(\circ,\circ)\}$, given by
$$\Lambda_K(j\larc{b}k)=(\Lambda_K^L(j\larc{b}k),\Lambda_K^R(j\larc{b}k)),$$
where 
\begin{equation} \label{LabellingRules}
\begin{split}
\Lambda_K^L(j\larc{b}k)&=\left\{\begin{array}{ll} \circ & \text{if $j\notin K$ or $j\larc{b}k$ starts below an arc starting at $j$,}\\ \bullet, &\text{otherwise.}\end{array}\right.\\
\Lambda_K^R(j\larc{b}k)&=\left\{\begin{array}{ll} \circ & \text{if $k\notin K$ or $j\larc{b}k$ ends below an arc ending at $k$,}\\ \bullet, &\text{otherwise.}\end{array}\right.
\end{split}
\end{equation}
In the above example, 
$$\Lambda_{\{1,2,3,4,5\}}(\lambda)= \begin{tikzpicture}[baseline=.1cm]
	\foreach \x in {1,4,5} 
		\node (\x) at (\x,0) [inner sep=0pt] {$\circ$};
	\node (3) at (3.07,0) [inner sep=0pt] {$\circ$};
	\node (2) at (2,0) [inner sep=0pt] {$\bullet$};
	\node (25) at (2.93,0) [inner sep=0pt] {$\bullet$};
	\foreach \z in {1,5}
		\node (\z1) at (\z,1/4) [inner sep=0pt] {$\bullet$};
	\foreach \z in {3,4}
		\node (\z1) at (\z,1/4) [inner sep=0pt] {$\circ$};		
	\foreach \w in {3,4}
		\node (\w2) at (\w,1/2) [inner sep=0pt] {$\bullet$};
	\foreach \x in {1,...,5} 
		\node at (\x,-.25) {$\scs\x$};
	\draw (11) .. controls (1.75,1.25) and (3.25,1.25) ..  node [auto] {$\scs 1$} (41); 
	\draw (1) .. controls (1.75,1.75) and (3.25,1.75) .. node [auto] {$\scs 2$} (42); 
	\draw (2) .. controls (2.25,.5) and (2.75,.5) .. node [auto] {$\scs 2$} (25); 
	\draw (3) .. controls (3.25,.5) and (3.75,.5) .. node [above=-1.5pt] {$\scs 2$} (4); 
	\draw (31) .. controls (3.5,1) and (4.5,1) .. node [auto] {$\scs 1$} (5); 
	\draw (32) .. controls (3.5,1.5) and (4.5,1.5) .. node [auto] {$\scs 1$}  (51);
\end{tikzpicture} \quad\text{and}\quad \Lambda_{\{2,3,4,5\}}(\lambda)= \begin{tikzpicture}[baseline=.1cm]
	\foreach \x in {1,4,5} 
		\node (\x) at (\x,0) [inner sep=0pt] {$\circ$};
	\node (3) at (3.07,0) [inner sep=0pt] {$\circ$};
	\node (2) at (2,0) [inner sep=0pt] {$\bullet$};
	\node (25) at (2.93,0) [inner sep=0pt] {$\bullet$};
	\foreach \z in {5}
		\node (\z1) at (\z,1/4) [inner sep=0pt] {$\bullet$};
	\foreach \z in {1,3,4}
		\node (\z1) at (\z,1/4) [inner sep=0pt] {$\circ$};		
	\foreach \w in {3,4}
		\node (\w2) at (\w,1/2) [inner sep=0pt] {$\bullet$};
	\foreach \x in {1,...,5} 
		\node at (\x,-.25) {$\scs\x$};
	\draw (11) .. controls (1.75,1.25) and (3.25,1.25) ..  node [auto] {$\scs 1$} (41); 
	\draw (1) .. controls (1.75,1.75) and (3.25,1.75) .. node [auto] {$\scs 2$} (42); 
	\draw (2) .. controls (2.25,.5) and (2.75,.5) .. node [auto] {$\scs 2$} (25); 
	\draw (3) .. controls (3.25,.5) and (3.75,.5) .. node [above=-1.5pt] {$\scs 2$} (4); 
	\draw (31) .. controls (3.5,1) and (4.5,1) .. node [auto] {$\scs 1$} (5); 
	\draw (32) .. controls (3.5,1.5) and (4.5,1.5) .. node [auto] {$\scs 1$}  (51);
\end{tikzpicture},$$
where we replace the endpoints of the arcs by their images under $\Lambda_K$.

Construct a bipartite graph $\Gamma_K(\lambda)$ given by vertices
\begin{align*}
V_\bullet &= \{j\larc{a}k\in \lambda\mid \Lambda_K(j\larc{a}k)=(\bullet,\bullet)\}\\ 
V_\circ &= \{j\larc{a}k\in \lambda\mid \Lambda_K(j\larc{a}k)=(\circ,\circ)\},
\end{align*}
and an edge from $i\larc{a}l\in V_\circ$ to $j\larc{b}k\in V_\bullet$ if $i<j<k<l$. 

In our example,
$$\Gamma_{\{1,2,3,4,5\}}(\lambda)=
\begin{tikzpicture} [baseline=0cm]
	\node  at (0,.75) {$\scs 3\slarc{2}4$};
	\node  at (1,.75) {$\scs 3\slarc{1}5$};
	\node at (0,-.75) {$\scs 2\slarc{2}3$};
	\node at  (1,-.75) {$\scs 3\slarc{1}5$};
	\node (14) at (0,.5)  [inner sep=0pt] {$\circ$};
	\node (34) at (1,.5)  [inner sep=0pt] {$\circ$};
	\node (23) at (0,-.5) [inner sep=0pt] {$\bullet$};
	\node at  (1,-.5) [inner sep=0pt] {$\bullet$};
\end{tikzpicture}
\quad\text{and}\quad 
\Gamma_{\{2,3,4,5\}}(\lambda)=
\begin{tikzpicture} [baseline=0cm]
	\node  at (0,.75) {$\scs 1\slarc{1}4$};
	\node  at (1,.75) {$\scs 3\slarc{2}4$};
	\node  at (2,.75) {$\scs 3\slarc{1}5$};
	\node at (0,-.75) {$\scs 2\slarc{2}3$};
	\node at  (1,-.75) {$\scs 3\slarc{1}5$};
	\node (14) at (0,.5)  [inner sep=0pt] {$\circ$};
	\node (34) at (1,.5)  [inner sep=0pt] {$\circ$};
	\node (25) at (2,.5)  [inner sep=0pt] {$\circ$};
	\node (23) at (0,-.5) [inner sep=0pt] {$\bullet$};
	\node at  (1,-.5) [inner sep=0pt] {$\bullet$};
	\draw (14) -- (23);
\end{tikzpicture}
.$$

The main theorem of this section follows, and its proof can be found in Section \ref{StraighteningSection}.

\begin{theorem}\label{TensorResult}
Suppose $\lambda,\mu,\nu\in \cS_K(q)$ with $K\subseteq \ZZ_{\geq 1}$ a finite subset.  Then
$$\langle \chi^\lambda\otimes \chi^\mu,\chi^{\nu}\rangle\neq 0$$
if and only if $\Gamma_K(\lambda\cup\mu\cup\bar{\nu})$ has a complete matching from $V_\bullet$ to $V_\circ$.
\end{theorem}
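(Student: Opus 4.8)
The plan is to reduce Theorem \ref{TensorResult} to Theorem \ref{MainTheorem}. The bridge is the observation (already noted before Lemma \ref{MainLemma}) that $\langle\chi^\lambda\otimes\chi^\mu,\chi^\nu\rangle=\langle\chi^\lambda\otimes\chi^\mu\otimes\chi^{\bar\nu},\One\rangle$, so the question becomes: when does the trivial character appear in the multiset-character $\chi^{\lambda\cup\mu\cup\bar\nu}$ of $U_K(q)$? By Theorem \ref{StraighteningTheorem} this multiset-character equals, up to a power of $q$, a restriction $\Res^{U_L}_{U_{K'}}(\chi^\eta)$ of a genuine supercharacter $\chi^\eta$ with $U_{K'}\cong U_K$. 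Since $q^{-r}$ is a nonzero scalar, $\One$ appears in $\chi^{\lambda\cup\mu\cup\bar\nu}$ if and only if $\One$ appears in $\Res^{U_L}_{U_{K'}}(\chi^\eta)$, and by Theorem \ref{MainTheorem} the latter happens if and only if the bipartite graph $\Gamma_{K'}(\eta)$ has a complete matching. The core of the proof is then to check that the combinatorial graph $\Gamma_{K'}(\eta)$ produced by the straightening construction is isomorphic (as a bipartite graph with its $V_\bullet$/$V_\circ$ labeling) to the graph $\Gamma_K(\lambda\cup\mu\cup\bar\nu)$ defined in Section \ref{GeneralGraphSection}.

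**Key steps, in order.**
First I would record the elementary reduction $\langle\chi^\lambda\otimes\chi^\mu,\chi^\nu\rangle\neq 0 \iff \langle\chi^{\lambda\cup\mu\cup\bar\nu},\One\rangle\neq 0$, using $\overline{\chi^\nu}=\chi^{\bar\nu}$ and the fact that tensoring supercharacters corresponds to taking the union of the underlying multisets (equation (\ref{ArcDecomposition}) and Theorem \ref{ArcTensorProduct}). Second, I would invoke Theorem \ref{StraighteningTheorem} to write $\chi^{\lambda\cup\mu\cup\bar\nu}=q^{-r}\Res^{U_L}_{U_{K'}}(\chi^\eta)$ as characters of $U_{K'}\cong U_K$, so that $\langle\chi^{\lambda\cup\mu\cup\bar\nu},\One\rangle\neq 0 \iff \langle\Res^{U_L}_{U_{K'}}(\chi^\eta),\One\rangle\neq 0$. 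Third, I would apply Theorem \ref{MainTheorem} to the set partition $\eta\in\cS_L(q)$ and subset $K'\subseteq L$: the coefficient of $\One$ is nonzero if and only if $\Gamma_{K'}(\eta)$ (in the sense of Section \ref{SectionMainResult}) has a complete matching from its $V_\bullet$ to its $V_\circ$. Fourth — the substantive step — I would show that the ``perturb-and-stack'' construction of Section \ref{GeneralGraphSection} applied to $\lambda\cup\mu\cup\bar\nu$ is precisely the bookkeeping device that records how the straightening procedure of Theorem \ref{StraighteningTheorem} separates overlapping arcs into distinct endpoints of $\eta$: each arc of the multiset becomes an arc of $\eta$, an endpoint labeled $\bullet$ by $\Lambda_K$ corresponds to an endpoint of $\eta$ lying in $K'$, an endpoint labeled $\circ$ corresponds to one lying outside $K'$, and the nesting condition $i<j<k<l$ defining edges is preserved. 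Hence $\Gamma_K(\lambda\cup\mu\cup\bar\nu)$ and $\Gamma_{K'}(\eta)$ are the same bipartite graph, and the two matching conditions coincide.

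**Main obstacle.**
The hard part will be step four: verifying that the labeling rules (\ref{LabellingRules}) — in particular the tie-breaking conventions (TL), (TR), (TB), including the special case $\wt_{jk}(\lambda)=0$ where two of the stacked arcs are matched into a $(\bullet,\circ)$/$(\circ,\bullet)$ crossing pair rather than nested — exactly mirror which endpoints the straightening theorem places inside versus outside $K'$. One must be careful that arcs receiving a mixed label $(\bullet,\circ)$ or $(\circ,\bullet)$ are correctly excluded from both $V_\bullet$ and $V_\circ$, matching the fact that in $\Gamma_{K'}(\eta)$ an arc with exactly one endpoint in $K'$ can be discarded without affecting the coefficient of $\One$ (Lemma \ref{NoHalves}). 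Once the dictionary between the stacked diagram and $\eta$ is set up cleanly, the edge condition and the matching condition transfer immediately; but assembling that dictionary — and confirming it is consistent with whatever explicit choice of $\eta$ and $K'$ the proof of Theorem \ref{StraighteningTheorem} makes — is where essentially all the work lies.
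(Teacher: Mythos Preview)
Your proposal is correct and follows essentially the same route as the paper: reduce to the coefficient of $\One$ in $\chi^{\lambda\cup\mu\cup\bar\nu}$, apply Theorem~\ref{StraighteningTheorem} to realize this as a restriction, and then invoke Theorem~\ref{MainTheorem}. The ``main obstacle'' you identify---matching the labeling $\Lambda_K$ with membership in $K'$ versus $L'$ after straightening---is exactly what the paper handles in the Remark immediately preceding its proof of Theorem~\ref{TensorResult}, so the dictionary you anticipate assembling is already recorded there and the proof itself is quite short.
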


\begin{example}
Suppose 
$$\lambda=
\begin{tikzpicture}[baseline=.3cm]
	\foreach \x in {1,...,6} 
		\node (\x) at (\x/2,0) [inner sep=0pt] {$\bullet$};
	\foreach \x in {1,...,6} 
		\node at (\x/2,-.25) {$\scs\x$};
	\draw (1) .. controls (2.25/2,1.5) and (4.75/2,1.5) .. node [auto] {$\scs a$}  (6); 
	\draw (2) .. controls (2.75/2,1) and (4.25/2,1) .. node [auto] {$\scs b$}  (5); 
\end{tikzpicture},
\qquad 
\mu = 
\begin{tikzpicture}[baseline=.3cm]
	\foreach \x in {1,...,6} 
		\node (\x) at (\x/2,0) [inner sep=0pt] {$\bullet$};
	\foreach \x in {1,...,6} 
		\node at (\x/2,-.25) {$\scs\x$};
	\draw (1) .. controls (2.25/2,1.5) and (4.75/2,1.5) .. node [auto] {$\scs c$}  (6); 
	\draw (2) .. controls (2.75/2,1) and (4.25/2,1) .. node [auto] {$\scs d$}  (5); 
	\draw (3) .. controls (3.25/2,.5) and (3.75/2,.5) .. node [auto] {$\scs e$}  (4); 
\end{tikzpicture}
,\qquad 
\nu=
\begin{tikzpicture}[baseline=.3cm]
	\foreach \x in {1,...,6} 
		\node (\x) at (\x/2,0) [inner sep=0pt] {$\bullet$};
	\foreach \x in {1,...,6} 
		\node at (\x/2,-.25) {$\scs\x$};
	\draw (1) .. controls (2.25/2,1.5) and (4.75/2,1.5) .. node [auto] {$\scs f$}  (6); 
	\draw (2) .. controls (2.25/2,.5) and (2.75/2,.5) .. node [auto] {$\scs g$}  (3); 
	\draw (3) .. controls (3.75/2,.75) and (4.25/2,.75) .. node [auto] {$\scs h$}  (5); 
\end{tikzpicture},$$
so
$$\lambda\cup\mu\cup\bar\nu=\begin{tikzpicture}[baseline=.5cm]
	\foreach \x in {1,...,6} 
		\node (\x) at (\x/2,0) [inner sep=0pt] {$\bullet$};
	\foreach \x in {1,...,6} 
		\node at (\x/2,-.25) {$\scs\x$};
	\draw (1) .. controls (2.25/2,2) and (4.75/2,2) .. node [auto] {$\scs a$}  (6); 
	\draw (2) .. controls (2.75/2,1) and (4.25/2,1) .. node [auto] {$\scs b$}  (5); 
	\draw (1) .. controls (2.25/2,2.5) and (4.75/2,2.5) .. node [auto] {$\scs c$}  (6); 
	\draw (2) .. controls (2.75/2,1.5) and (4.25/2,1.5) .. node [auto] {$\scs d$}  (5); 
	\draw (3) .. controls (3.25/2,.3) and (3.75/2,.3) .. node [above=-2pt,pos=.7] {$\scs e$}  (4); 
	\draw (1) .. controls (2.25/2,3) and (4.75/2,3) .. node [auto] {$\scs -f$}  (6); 
	\draw (2) .. controls (2.25/2,.3) and (2.75/2,.3) .. node [above=-2pt,pos=.6] {$\scs -g$}  (3); 
	\draw (3) .. controls (3.75/2,.75) and (4.25/2,.75) .. node [above,pos=.2] {$\scs -h$}  (5); 
\end{tikzpicture}
=
\begin{tikzpicture}[baseline=.5cm]
	\foreach \x in {1,2,5,6} 
		\node (\x) at (\x/2,0) [inner sep=0pt] {$\circ$};
	\node (4) at (2,0) [inner sep=0pt] {$\bullet$};
	\node (3l) at (1.45,0) [inner sep=0pt] {$\bullet$};
	\node (3r) at (1.55,0) [inner sep=0pt] {$\circ$};
	\foreach \x in {1,2,3,5,6} 
		\node (1\x) at (\x/2,.25) [inner sep=0pt] {$\bullet$};
	\foreach \x in {1,...,6} 
		\node at (\x/2,-.25) {$\scs\x$};
	\draw (1) .. controls (2.25/2,2) and (4.75/2,2) .. node [auto] {$\scs a$}  (6); 
	\draw (12) .. controls (2.75/2,1) and (4.25/2,1) .. node [auto] {$\scs b$}  (15); 
	\draw (11) .. controls (2.25/2,2.5) and (4.75/2,2.5) .. node [auto] {$\scs c$}  (16); 
	\draw (12) .. controls (2.75/2,1.5) and (4.25/2,1.5) .. node [above=-1pt] {$\scs d$}  (15); 
	\draw (3r) .. controls (3.25/2,.3) and (3.75/2,.3) .. node [above=-2pt,pos=.7] {$\scs e$}  (4); 
	\draw (11) .. controls (2.25/2,3) and (4.75/2,3) .. node [auto] {$\scs -f$}  (16); 
	\draw (2) .. controls (2.25/2,.3) and (2.75/2,.3) .. node [above=-2pt,pos=.6] {$\scs -g$}  (3l); 
	\draw (13) .. controls (3.75/2,.75) and (4.25/2,.75) .. node [above,pos=.1] {$\scs -h$}  (5); 
\end{tikzpicture}
$$
where the perturbing of $\{2\larc{b}5,2\larc{d}5\}$ and $\{1\larc{c}6,1\larc{-f}6\}$ depend on $b+d$ and $a+c-f$.
Then
$$\begin{array}{|c|c|c|} \hline  \Gamma_{\{1,2,3,4,5,6\}}(\lambda\cup\mu\cup\bar\nu) & b+d=0 & b+d\neq 0\\ \hline
a+c-f=0 & \overset{1\slarc{a}6}{\circ} & 
\begin{tikzpicture} [baseline=0cm]
	\node at (0,.75) {$\scs 1\slarc{a}6$};
	\node (1a6) at (0,.5)  [inner sep=0pt] {$\circ$};
	\node (2d5) at (0,-.5)  [inner sep=0pt] {$\bullet$};
	\node  at (0,-.75) {$\scs 2\slarc{d}5$};
	\draw (1a6) -- (2d5);
\end{tikzpicture}
 \\ \hline
a+c-f\neq 0 & 
\begin{tikzpicture}[baseline=0cm]
	\node  at (0,.75) {$\scs 1\slarc{a}6$};
	\node at (1,.75) {$\scs 1\slarc{c}6$};
	\node at  (.5,-.75) {$\scs 1\slarc{-f}6$};
	\node (1a6) at (0,.5) [inner sep=0pt] {$\circ$};
	\node (1c6) at (1,.5) [inner sep=0pt] {$\circ$};
	\node (1f6) at  (.5,-.5) [inner sep=0pt] {$\bullet$};
\end{tikzpicture} & 
\begin{tikzpicture} [baseline=0cm]
	\node  at (0,.75) {$\scs 1\slarc{a}6$};
	\node  at (1,.75) {$\scs 1\slarc{c}6$};
	\node at (0,-.75) {$\scs 2\slarc{d}5$};
	\node at  (1,-.75) {$\scs 1\slarc{-f}6$};
	\node (1a6) at (0,.5)  [inner sep=0pt] {$\circ$};
	\node (1c6) at (1,.5)  [inner sep=0pt] {$\circ$};
	\node (2d5) at (0,-.5) [inner sep=0pt] {$\bullet$};
	\node (1f6) at  (1,-.5) [inner sep=0pt] {$\bullet$};
	\draw (1a6) -- (2d5);
	\draw (1c6) -- (2d5);
\end{tikzpicture}\\ \hline
\end{array}$$
Thus, the coefficient of $\chi^\nu$ is nonzero in $\chi^{\lambda\cup\mu}$ if and only if $a+c-f=0$.
\end{example}

\subsection{Straightening rules} \label{StraighteningSection}

Given $\lambda\in\cM(q)$, this section describes ``straightening" rules that allow us to create a sequence
$$\lambda=\lambda^{(0)},\lambda^{(1)}, \cdots,\lambda^{(\ell)},$$
where at each stage we remove a conflict of type (CL), (CR), or (CB) until we arrive at $\lambda^{(\ell)}\in \cS(q)$.  Furthermore, there is an underlying sequence of pairs $(K^{(0)},L^{(0)}),(K^{(1)},L^{(1)}),\ldots,(K^{(\ell)},L^{(\ell)})$ of finite subsets such that $|K^{(i)}|=|K|$ and $\lambda^{(i)}\in\cM_{K^{(i)}\cup L^{(i)}}(q)$.   While the order in which one applies the straightening rules does matter in terms of which set partition one obtains, for our purposes in this paper (Theorem \ref{StraighteningTheorem}, below) the differences are irrelevant.  The rules are as follows.

For $a,b\in \FF_q^\times$, in moving from $\lambda^{(m-1)}$ to $\lambda^{(m)}$ we can
\begin{equation}
\begin{tikzpicture}[baseline=.2cm]
		\node (1) at (0,0) [inner sep=0pt] {$\bullet$};
		\node at (0,-.25) {$\scs i$};
		\node (2) at (2,0) [inner sep=0pt] {$\bullet$};
		\node at (2,-.25) {$\scs j$};
		\node (3) at (3,0) [inner sep=0pt] {$\bullet$};
		\node at (3,-.25) {$\scs k$};		
	\draw (1) .. controls (.75,1.5) and (2.25,1.5) .. node [auto] {$\scs a$} (3); 
	\draw (1) .. controls (.5,.75) and (1.5,.75) .. node [auto] {$\scs b$} (2); 
\end{tikzpicture}
\longmapsto 
\begin{tikzpicture}[baseline=.2cm]
		\node (1) at (0,0) [inner sep=0pt] {$\bullet$};
		\node at (0,-.25) {$\scs i$};
		\node (1') at (.5,0) [inner sep=0pt] {$\circ$};
		\node at (.5,-.25) {$\scs i+1$};
		\node (2) at (2,0) [inner sep=0pt] {$\bullet$};
		\node at (2,-.25) {$\scs j+1$};
		\node (3) at (3,0) [inner sep=0pt] {$\bullet$};
		\node at (3,-.25) {$\scs k+1$};		
	\draw (1) .. controls (.75,1.5) and (2.25,1.5) ..  node [auto] {$\scs a$} (3)(3); 
	\draw (1') .. controls (1,.75) and (1.5,.75) .. node [auto] {$\scs 1$} (2); 
\end{tikzpicture}\label{StraightenLeft}\tag{SL}\end{equation}
with
\begin{align*}
 K^{(m)}  &= ([1,i]\cap K^{(m-1)})\cup ((\ZZ_{\geq i+1}\cap K^{(m-1)})+1)\\
 L^{(m)} &= ([1,i]\cap L^{(m-1)})\cup\{i+1\}\cup ((\ZZ_{\geq i+1}\cap L^{(m-1)})+1);
 \end{align*}
 \begin{equation}
\begin{tikzpicture}[baseline=.2cm]
		\node (1) at (0,0) [inner sep=0pt] {$\bullet$};
		\node at (0,-.25) {$\scs i$};
		\node (2) at (1,0) [inner sep=0pt] {$\bullet$};
		\node at (1,-.25) {$\scs j$};
		\node (3) at (3,0) [inner sep=0pt] {$\bullet$};
		\node at (3,-.25) {$\scs k$};		
	\draw (1) .. controls (.75,1.5) and (2.25,1.5) ..  node [auto] {$\scs a$} (3); 
	\draw (2) .. controls (1.5,.75) and (2.5,.75) .. node [auto] {$\scs b$} (3); 
\end{tikzpicture} 
\longmapsto 
\begin{tikzpicture}[baseline=.2cm]
		\node (1) at (0,0) [inner sep=0pt] {$\bullet$};
		\node at (0,-.25) {$\scs i$};
		\node (2) at (1,0) [inner sep=0pt] {$\bullet$};
		\node at (1,-.25) {$\scs j$};
		\node (3') at (2.5,0) [inner sep=0pt] {$\circ$};
		\node at (2.5,-.25) {$\scs k$};
		\node (3) at (3,0) [inner sep=0pt] {$\bullet$};
		\node at (3,-.25) {$\scs k+1$};		
	\draw (1) .. controls (.75,1.5) and (2.25,1.5) .. node [auto] {$\scs a$} (3); 
	\draw (2) .. controls (1.5,.75) and (2,.75) .. node [auto] {$\scs 1$} (3'); 
\end{tikzpicture} \label{StraightenRight}\tag{SR} 
\end{equation}
with
\begin{align*}
K^{(m)}&=([1,k-1]\cap K^{(m-1)})\cup ((\ZZ_{\geq k}\cap K^{(m-1)})+1),\\ 
L^{(m)}&=([1,k-1]\cap L^{(m-1)})\cup\{k\}\cup ((\ZZ_{\geq k}\cap L^{(m-1)})+1);
\end{align*}
\begin{equation}
\begin{tikzpicture}[baseline=.2cm]
		\node (1) at (0,0) [inner sep=0pt] {$\bullet$};
		\node at (0,-.25) {$\scs i$};
		\node (3) at (3,0) [inner sep=0pt] {$\bullet$};
		\node at (3,-.25) {$\scs k$};		
	\draw (1) .. controls (.75,1) and (2.25,1) .. node [auto] {$\scs a$} (3); 
	\draw (1) .. controls (.75,1.5) and (2.25,1.5) .. node [auto] {$\scs b$}(3); 
\end{tikzpicture} 
\longmapsto
\left\{\begin{array}{ll} 
\begin{tikzpicture}[baseline=.2cm]
		\node (1) at (0,0) [inner sep=0pt] {$\bullet$};
		\node at (0,-.25) {$\scs i$};
		\node (1') at (.5,0) [inner sep=0pt] {$\circ$};
		\node at (.5,-.25) {$\scs i+1$};
		\node (3') at (2.5,0) [inner sep=0pt] {$\circ$};
		\node at (2.4,-.25) {$\scs k+1$};
		\node (3) at (3,0) [inner sep=0pt] {$\bullet$};
		\node at (3.1,-.25) {$\scs k+2$};		
	\draw (1) .. controls (.75,1) and (1.75,1) .. node [auto] {$\scs 1$} (3');
	\draw (1') .. controls (1.25,1) and (2.25,1) .. node [auto] {$\scs 1$} (3); 
\end{tikzpicture}
, & \text{if $a+b=0$,}\\
\begin{tikzpicture}[baseline=.2cm]
		\node (1) at (0,0) [inner sep=0pt] {$\bullet$};
		\node at (0,-.25) {$\scs i$};
		\node (1') at (.5,0) [inner sep=0pt] {$\circ$};
		\node at (.5,-.25) {$\scs i+1$};
		\node (3') at (2.5,0) [inner sep=0pt] {$\circ$};
		\node at (2.4,-.25) {$\scs k+1$};
		\node (3) at (3,0) [inner sep=0pt] {$\bullet$};
		\node at (3.1,-.25) {$\scs k+2$};		
	\draw (1) .. controls (.75,1.25) and (2.25,1.25) .. node [auto] {$\scs a+b$} (3);
	\draw (1') .. controls (1,.75) and (2,.75) .. node [auto] {$\scs 1$} (3'); 
\end{tikzpicture},
& \text{if $a+b\neq 0$.}\end{array}\right. \label{StraightenDouble}\tag{SB}
\end{equation}
with
\begin{align*}
K^{(m)}&=([1,i]\cap K^{(m-1)})\cup (([i+1,k-1]\cap K^{(m-1)})+1)\cup((\ZZ_{\geq k}\cap K^{(m-1)})+2)\\
L^{(m)}&=([1,i]\cap L^{(m-1)})\cup\{i+1\} (([i+1,k-1]\cap L^{(m-1)})+1)\cup\{k+1\}\cup((\ZZ_{\geq k}\cap L^{(m-1)})+2).
\end{align*}

In each case there are  ``new nodes" indicated by $\circ$ that push all the other node values to the right up (note, we view the $\bullet$-nodes as being the same though their number labels may change).  In fact, $K^{(m)}$ is the set of original nodes (up to being pushed around) and  $L^{(m)}$ is the set of nodes that were at some point $\circ$-nodes (see example after Theorem \ref{StraighteningTheorem}). 

The following lemma states that these rules (SL), (SR) and (SB) fundamentally do not change the underlying character.

\begin{lemma} \label{OneStraightening}
Let $\lambda\in \cM_K(q)$ and apply (SL), (SR) or (SB) to obtain $\tilde\lambda=\lambda^{(1)}\in \cM_{K^{(1)}\cup L^{(1)}}(q)$.  Then as a character of $U_K\cong U_{K^{(1)}}$,
$$ \chi^\lambda=q^{-r_{K^{(1)}}^{K^{(1)}\cup L^{(1)}}(\tilde\lambda)}\Res_{U_{K^{(1)}}}^{U_{K^{(1)}\cup L^{(1)}}}(\chi^{\tilde\lambda}).$$
\end{lemma}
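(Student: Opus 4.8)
The plan is to localize the identity to the single conflict being resolved, and then to verify, case by case, that Theorem~\ref{ArcRestriction} applied to the freshly inserted arc(s) of $\tilde\lambda$ reproduces exactly the output of Theorem~\ref{ArcTensorProduct} applied to the original conflict.

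First I would write $\lambda=\lambda_0\cup\gamma$, where $\gamma$ is the conflicting pair on which the chosen rule acts (so $\gamma=\{i\larc{a}k,i\larc{b}j\}$ for (SL), $\gamma=\{i\larc{a}k,j\larc{b}k\}$ for (SR), and $\gamma=\{i\larc{a}k,i\larc{b}k\}$ for (SB)) and $\lambda_0=\lambda\setminus\gamma$. Every arc of $\lambda_0$ has both endpoints among the original nodes, so after the relabeling built into the rule $\lambda_0$ becomes $\tilde\lambda_0\in\cM_{K^{(1)}}(q)\cap\cM_{K^{(1)}\cup L^{(1)}}(q)$, while $\gamma$ becomes the straightened pair $\tilde\gamma$, and $\tilde\lambda=\tilde\lambda_0\cup\tilde\gamma$. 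By (\ref{ArcDecomposition}) and the fact that restriction distributes over the $\otimes$-decomposition of a supercharacter into arcs, $\chi^\lambda=\chi^{\lambda_0}\otimes\chi^\gamma$ and $\Res(\chi^{\tilde\lambda})=\Res(\chi^{\tilde\lambda_0})\otimes\Res(\chi^{\tilde\gamma})$. By (\ref{AlreadyDownArcsRestriction}) the first factor is $q^{r_{K^{(1)}}^{K^{(1)}\cup L^{(1)}}(\tilde\lambda_0)}\chi^{\tilde\lambda_0}$; since $r_K^L$ counts pairs (node, straddling arc) it is additive over the arcs of a multiset, so $r:=r_{K^{(1)}}^{K^{(1)}\cup L^{(1)}}(\tilde\lambda)=r_{K^{(1)}}^{K^{(1)}\cup L^{(1)}}(\tilde\lambda_0)+r_{K^{(1)}}^{K^{(1)}\cup L^{(1)}}(\tilde\gamma)$. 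Finally, under the order isomorphism $K\cong K^{(1)}$ the supercharacter $\chi^{\lambda_0}$ of $U_K$ is identified with $\chi^{\tilde\lambda_0}$ of $U_{K^{(1)}}$. Thus the lemma reduces to the local identity
$$\chi^\gamma=q^{-r_{K^{(1)}}^{K^{(1)}\cup L^{(1)}}(\tilde\gamma)}\,\Res_{U_{K^{(1)}}}^{U_{K^{(1)}\cup L^{(1)}}}(\chi^{\tilde\gamma})$$
of characters of $U_K\cong U_{K^{(1)}}$.

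To prove this, note the geometric feature of the rules that drives everything: each new $\circ$-node is inserted immediately to the right of $i$ or immediately to the left of $k$. Consequently, for a new arc of $\tilde\gamma$, the set of $K^{(1)}$-nodes lying strictly between its endpoints is in order-preserving bijection with the set of $K$-nodes strictly between the corresponding endpoints of $\gamma$; moreover the only nodes outside $K^{(1)}$ lying weakly between a new arc's endpoints are the new nodes themselves, so each new arc has $r_{K^{(1)}}^{K^{(1)}\cup L^{(1)}}$-value $0$. Now restrict $\chi^{\tilde\gamma}$ arc by arc using Theorem~\ref{ArcRestriction}: in cases (SL), (SR) and (SB) with $a+b\ne0$ one arc of $\tilde\gamma$ has both endpoints in $K^{(1)}$ and restricts (first branch) to a power of $q$ times itself, while the remaining new arc has exactly one endpoint in $L^{(1)}$ and uses the second or third branch (for (SB) with $a+b\ne0$ the single new arc has both endpoints in $L^{(1)}$ and uses the fourth branch); in case (SB) with $a+b=0$ both arcs of $\tilde\gamma$ have exactly one endpoint in $L^{(1)}$. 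Expanding the tensor product of these restrictions, and using that $\chi^{\{i'\larc{a'}l',j'\larc{b'}k'\}}=\chi^{i'\larc{a'}l'}\otimes\chi^{j'\larc{b'}k'}$ for any pair of arcs forming a $q$-set partition, one recognizes term by term the right-hand side of the matching case of Theorem~\ref{ArcTensorProduct} (the bijection above identifies the index sets, and $|K\cap[i,l]|$ there is read as the number of $K$-nodes strictly inside $(i,l)$). The powers of $q$ extracted from the arc-by-arc restrictions add up, by additivity of $r$, to exactly $r_{K^{(1)}}^{K^{(1)}\cup L^{(1)}}(\tilde\gamma)$, so they cancel the prefactor, and the reduction is complete.

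The only genuine work is the bookkeeping in the preceding paragraph: for each of the four cases, verifying that the range of the sum coming out of the restriction of the new arc(s) coincides, under the order isomorphism, with the range of the sum in the matching case of Theorem~\ref{ArcTensorProduct}, and that the extracted $q$-exponents are as claimed. I expect the (SB) cases to be the most delicate, since two new nodes are inserted and the rule branches on whether $a+b=0$; checking a couple of minimal instances (e.g.\ $K=\{i,i+1\}$ and $K=\{i,i+1,i+2\}$) both confirms the computation and pins down the correct reading of $|K\cap[i,l]|$ in Theorem~\ref{ArcTensorProduct}.
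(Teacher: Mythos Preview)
Your approach is essentially the paper's: isolate the conflicting pair, apply Theorem~\ref{ArcTensorProduct} to $\gamma$ and Theorem~\ref{ArcRestriction} to $\tilde\gamma$, and match the resulting expansions under the order isomorphism $K\cong K^{(1)}$ (the paper writes out only the (SL) case explicitly and declares the others analogous, so your reduction via additivity of $r$ and (\ref{AlreadyDownArcsRestriction}) is if anything more careful). One small inaccuracy worth fixing: in the (SB) case with $a+b=0$ each of the two arcs of $\tilde\gamma$ actually has $r$-value $1$, not $0$, since each is straddled by the \emph{other} new node; but since your argument only uses that the arc-by-arc $q$-powers sum to $r_{K^{(1)}}^{K^{(1)}\cup L^{(1)}}(\tilde\gamma)$ by additivity, this does not affect the conclusion.
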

\begin{proof}
Let $(i\larc{a}l,j\larc{b}k)$ be the conflict in $\lambda$ that gets resolved to obtain $\tilde\lambda$.  If $i=j<k<l$, then we applied (SL) to get $\tilde\lambda$ with $K'=K^{(1)}=[1,i]\cap K\cup ((\ZZ_{\geq i+1}\cap K)+1)$ and $L'=L^{(1)}=\{i+1\}$.  On the one hand,  by Theorem \ref{ArcTensorProduct},
\begin{align*}
\chi^\lambda & = \chi^{\lambda-\{i\slarc{a}l,i\slarc{b}k\}}\otimes \chi^{\{i\slarc{a}l,j\slarc{b}k\}}\\
&=\chi^{\lambda-\{i\slarc{a}l,i\slarc{b}k\}} \otimes \bigg(\chi^{i\slarc{a}l}+\sum_{i<j'<k\atop j'\in K,b'\in \FF_q^\times} \chi^{\{i\slarc{a}l,j'\slarc{b'}k\}}\bigg)\\
&=\chi^{\lambda-\{i\slarc{b}k\}} \otimes \bigg(\chi^{\emptyset}+\sum_{i<j'<k\atop j'\in K,b'\in \FF_q^\times} \chi^{\{j'\slarc{b'}k\}}\bigg).
\end{align*}
On the other hand, by Theorem \ref{ArcRestriction},
\begin{align*}
q^{-r_{K'}^{K'\cup L'}(\tilde\lambda)}\Res_{U_{K'}}^{U_{K'\cup L'}}(\chi^{\tilde\lambda}) &= q^{-r_{K'}^{K'\cup L'}(\tilde\lambda)} \Res_{U_{K'}}^{U_{K'\cup L'}}(\chi^{\tilde\lambda-\{i+1\slarc{1}k+1\}})\otimes \Res_{U_{K'}}^{U_{K'\cup L'}}(\chi^{\{i+1\slarc{1}k+1\}})\\ 
&= \chi^{\tilde\lambda-\{i+1\slarc{1}k+1\}}\otimes \bigg(\chi^{\emptyset}+\sum_{i+1<j'<k+1\atop j'\in K',b'\in \FF_q^\times} \chi^{\{j'\slarc{b'}k+1\}}\bigg).
\end{align*}
These are the same characters up to re-ordering the indices between $K$ and $K'$.  The proofs for (SR) and (SB) are analogous.
\end{proof}

By iterating Lemma \ref{OneStraightening} to remove all the conflicts of a multiset, we see that up to shifting of indices every tensor product is the same  (up to a scalar multiple) as restriction from some supercharacter. 

\begin{theorem} \label{StraighteningTheorem}
Let $\lambda\in \cM_K(q)$.  Then there exists $\tilde{\lambda}\in \cS_{K'\cup L'}(q)$ with $|K|=|K'|$, such that   
$$\chi^\lambda=q^{-r_{K'}^{K'\cup L'}(\tilde\lambda)}\Res_{U_{K'}}^{U_{K'\cup L'}}(\chi^{\tilde\lambda}).$$
\end{theorem}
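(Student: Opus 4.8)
The plan is to prove Theorem \ref{StraighteningTheorem} by inducting on the number of conflicts present in $\lambda$, with Lemma \ref{OneStraightening} providing the inductive step. First I would observe that if $\lambda$ has no conflicts of type (CL), (CR), or (CB), then $\lambda\in\cS_K(q)$ already, and we may take $\tilde\lambda=\lambda$, $K'=K$, $L'=\emptyset$; the exponent $r_{K'}^{K'\cup L'}(\tilde\lambda)=r_K^K(\lambda)=0$ and $\Res_{U_K}^{U_K}(\chi^\lambda)=\chi^\lambda$, so the formula holds trivially. Note that since $\lambda\in\cM_K(q)$ (all arcs have both endpoints in $K$), the only possible conflicts are of type (CL), (CR), (CB), not (CN), which is why these three straightening rules suffice.

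For the inductive step, suppose $\lambda$ has at least one conflict. Pick any such conflict and apply the corresponding rule (SL), (SR), or (SB) to produce $\lambda^{(1)}\in\cM_{K^{(1)}\cup L^{(1)}}(q)$ with $|K^{(1)}|=|K|$. By Lemma \ref{OneStraightening}, as a character of $U_K\cong U_{K^{(1)}}$,
$$\chi^\lambda=q^{-r_{K^{(1)}}^{K^{(1)}\cup L^{(1)}}(\lambda^{(1)})}\Res_{U_{K^{(1)}}}^{U_{K^{(1)}\cup L^{(1)}}}(\chi^{\lambda^{(1)}}).$$
I would then want to apply the inductive hypothesis to $\lambda^{(1)}$, but a small subtlety is that $\lambda^{(1)}$ lives in $\cM_{K^{(1)}\cup L^{(1)}}(q)$, not in $\cM_{K^{(1)}}(q)$ — some arcs now have endpoints in $L^{(1)}$. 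So the induction should really be set up for multisets $\lambda\in\cM_M(q)$ together with a distinguished subset $K\subseteq M$ (playing the role of "original nodes"), proving: there exists $\tilde\lambda\in\cS_{K'\cup L'}(q)$ with $|K'|=|K|$ and $K'\subseteq K'\cup L'$ such that $\chi^\lambda$, viewed as a character of $U_K\cong U_{K'}$ via any chosen order-isomorphism, equals $q^{-r_{K'}^{K'\cup L'}(\tilde\lambda)}\Res_{U_{K'}}^{U_{K'\cup L'}}(\chi^{\tilde\lambda})$. Here $\chi^\lambda$ as a character of $U_M$ and as a character of $U_K$ differ only by the scalar $q^{r_K^M(\lambda)}$ from (\ref{AlreadyDownArcsRestriction}), so one should track that factor too, or alternatively absorb it by noting that the statement of the theorem only claims equality as characters of the copy of $U_{|K|}$. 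With this formulation the inductive hypothesis applies to $\lambda^{(1)}$ (with distinguished subset $K^{(1)}$), since $\lambda^{(1)}$ has one fewer conflict than $\lambda$ — each rule removes exactly one conflict without creating new ones, which needs a brief check.

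The one point requiring care, and the place I'd expect to spend the most effort, is composing restrictions: after the inductive hypothesis gives $\chi^{\lambda^{(1)}}=q^{-r}\Res(\chi^{\tilde\lambda})$ from some $U_{K''}\subseteq U_{K''\cup L''}$ with $U_{K''}\cong U_{K^{(1)}}$, I need to combine this with the Lemma \ref{OneStraightening} identity to get a single restriction from $U_{\tilde K}$ to the copy of $U_K$, matching the exponent $r_{\tilde K}^{\tilde K\cup \tilde L}(\tilde\lambda)$ with the sum of the two exponents that appear. This amounts to transitivity of restriction, $\Res^{U_C}_{U_A}=\Res^{U_B}_{U_A}\circ\Res^{U_C}_{U_B}$ for $A\subseteq B\subseteq C$, together with the additivity $r_A^C(\mu)=r_A^B(\mu)+r_B^C(\mu)$ for a multiset $\mu\in\cM_A(q)$ (which follows directly from the definition (\ref{DegreeRatio}) by partitioning the set $\{(j,i\larc{a}l)\mid i<j<l,\ j\notin A\}$ according to whether $j\in B$). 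The index-shifting performed by the rules (SL), (SR), (SB) means that "$\subseteq$" must be interpreted up to the order-preserving bijections the rules induce, but since all the representation theory only depends on the relative order of the nodes this causes no real trouble — one just fixes, once and for all, the chain of order-isomorphisms. Assembling these pieces yields the claimed formula for $\lambda$, completing the induction and hence the proof.
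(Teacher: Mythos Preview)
Your proposal is correct and follows the same approach the paper takes: the paper's entire proof is the single sentence ``By iterating Lemma \ref{OneStraightening} to remove all the conflicts of a multiset, we see that up to shifting of indices every tensor product is the same (up to a scalar multiple) as restriction from some supercharacter,'' and your write-up spells out exactly this iteration, together with the bookkeeping (transitivity of restriction, additivity of $r$, termination of the process) that the paper leaves implicit.
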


\begin{example}
Suppose $a,b,c,d,e,f\in \FF_q^\times$ with  $a+b+c=0$, and
$$\lambda=
\begin{tikzpicture}[baseline=.3cm]
	\foreach \x in {1,...,5} 
		\node (\x) at (\x,0) [inner sep=0pt] {$\bullet$};
	\foreach \x in {1,...,5} 
		\node at (\x,-.25) {$\scs\x$};
	\draw (1) .. controls (1.5,1.25) and (2.5,1.25) .. node [auto] {$\scs b$}  (3); 
	\draw (1) .. controls (1.5,.75) and (2.5,.75) .. node [auto] {$\scs a$}  (3); 
	\draw (1) .. controls (1.5,1.75) and (2.5,1.75) .. node [auto] {$\scs c$}  (3); 
	\draw (2) .. controls (2.75,2) and (4.25,2) .. node [auto] {$\scs f$}  (5); 
	\draw (4) .. controls (4.25,.4) and (4.75,.4)  .. node [auto] {$\scs d$}  (5);
	\draw (3) .. controls (3.5,1.25) and (4.5,1.25)  .. node [auto] {$\scs e$}   (5);
\end{tikzpicture}$$
Then we can resolve conflicts as follows
\begin{align*}
\begin{tikzpicture}[baseline=.3cm]
	\foreach \x in {1,...,5} 
		\node (\x) at (\x,0) [inner sep=0pt] {$\bullet$};
	\foreach \x in {1,...,5} 
		\node at (\x,-.25) {$\scs\x$};
	\draw (1) .. controls (1.5,1.25) and (2.5,1.25) .. node [auto] {$\scs b$}  (3); 
	\draw (1) .. controls (1.5,.75) and (2.5,.75) .. node [auto] {$\scs a$}  (3); 
	\draw (1) .. controls (1.5,1.75) and (2.5,1.75) .. node [auto] {$\scs c$}  (3); 
	\draw (2) .. controls (2.75,2) and (4.25,2) .. node [auto] {$\scs f$}  (5); 
	\draw (4) .. controls (4.25,.4) and (4.75,.4)  .. node [auto] {$\scs d$}  (5);
	\draw (3) .. controls (3.5,1.25) and (4.5,1.25)  .. node [auto] {$\scs e$}   (5);
\end{tikzpicture}
&\overset{(\ref{StraightenRight})}{\mapsto}
\begin{tikzpicture}[baseline=.3cm]
	\foreach \x in {1,...,6} 
		\node (\x) at (\x,0) [inner sep=0pt] {$\bullet$};
	\foreach \x in {1,...,6} 
		\node at (\x,-.25) {$\scs\x$};
	\draw (1) .. controls (1.5,1.25) and (2.5,1.25) .. node [auto] {$\scs b$}  (3); 
	\draw (1) .. controls (1.5,.75) and (2.5,.75) .. node [auto] {$\scs a$}  (3); 
	\draw (1) .. controls (1.5,1.75) and (2.5,1.75) .. node [auto] {$\scs c$}  (3); 
	\draw (2) .. controls (3,2) and (5,2) .. node [auto] {$\scs f$}  (6); 
	\draw (4) .. controls (4.25,.4) and (4.75,.4)  .. node [auto] {$\scs 1$}  (5);
	\draw (3) .. controls (3.75,1.25) and (5.25,1.25)  .. node [auto] {$\scs e$}   (6);
	\node at (5) [shape=circle,fill=white,inner sep=1.5pt, draw] {};
\end{tikzpicture}\\
&\overset{(\ref{StraightenRight})}{\mapsto}
\begin{tikzpicture}[baseline=.3cm]
	\foreach \x in {1,...,7} 
		\node (\x) at (\x,0) [inner sep=0pt] {$\bullet$};
	\foreach \x in {1,...,7} 
		\node at (\x,-.25) {$\scs\x$};
	\draw (1) .. controls (1.5,1.25) and (2.5,1.25) .. node [auto] {$\scs b$}  (3); 
	\draw (1) .. controls (1.5,.75) and (2.5,.75) .. node [auto] {$\scs a$}  (3); 
	\draw (1) .. controls (1.5,1.75) and (2.5,1.75) .. node [auto] {$\scs c$}  (3); 
	\draw (2) .. controls (3.25,2) and (5.75,2) .. node [auto] {$\scs f$}  (7); 
	\draw (4) .. controls (4.25,.4) and (4.75,.4)  .. node [auto] {$\scs 1$}  (5);
	\draw (3) .. controls (3.75,1.25) and (5.25,1.25)  .. node [auto] {$\scs 1$}   (6);
	\node at (5) [shape=circle,fill=white,inner sep=1.5pt, draw] {};
	\node at (6) [shape=circle,fill=white,inner sep=1.5pt, draw] {};
\end{tikzpicture}\\
&\overset{(\ref{StraightenDouble})}{\mapsto}
\begin{tikzpicture}[baseline=.3cm]
	\foreach \x in {1,...,9} 
		\node (\x) at (\x,0) [inner sep=0pt] {$\bullet$};
	\foreach \x in {1,...,9} 
		\node at (\x,-.25) {$\scs\x$};
	\draw (1) .. controls (2,1.25) and (4,1.25) .. node [auto] {$\scs a+b$}  (5); 
	\draw (2) .. controls (2.5,.75) and (3.5,.75) .. node [auto] {$\scs 1$}  (4); 
	\draw (1) .. controls (2,1.9) and (4,1.9) .. node [auto] {$\scs c$}  (5); 
	\draw (3) .. controls (4.5,2) and (7.5,2) .. node [auto] {$\scs f$}  (9); 
	\draw (6) .. controls (6.25,.4) and (6.75,.4)  .. node [auto] {$\scs 1$}  (7);
	\draw (5) .. controls (5.75,1.25) and (7.25,1.25)  .. node [auto] {$\scs 1$}   (8);
	\node at (2) [shape=circle,fill=white,inner sep=1.5pt, draw] {};
	\node at (4) [shape=circle,fill=white,inner sep=1.5pt,draw] {};
	\node at (7) [shape=circle,fill=white,inner sep=1.5pt, draw] {};
	\node at (8) [shape=circle,fill=white,inner sep=1.5pt, draw] {};
\end{tikzpicture}\\
&\overset{(\ref{StraightenDouble})}{\mapsto}
\begin{tikzpicture}[baseline=.3cm]
	\foreach \x in {1,...,11} 
		\node (\x) at (\x,0) [inner sep=0pt] {$\bullet$};
	\foreach \x in {1,...,11} 
		\node at (\x,-.25) {$\scs\x$};
	\draw (1) .. controls (2.25,1.5) and (4.75,1.5) .. node [auto] {$\scs 1$}  (6); 
	\draw (3) .. controls (3.5,.75) and (4.5,.75) .. node [auto] {$\scs 1$}  (5); 
	\draw (2) .. controls (3.25,1.5) and (5.75,1.5) .. node [auto] {$\scs 1$}  (7); 
	\draw (4) .. controls (5.75,2) and (9.25,2) .. node [auto] {$\scs f$}  (11); 
	\draw (8) .. controls (8.25,.4) and (8.75,.4)  .. node [auto] {$\scs 1$}  (9);
	\draw (7) .. controls (7.75,1.25) and (9.25,1.25)  .. node [auto] {$\scs 1$}   (10);
	\node at (2) [shape=circle,fill=white,inner sep=1.5pt, draw] {};
	\node at (3) [shape=circle,fill=white,inner sep=1.5pt, draw] {};
	\node at (5) [shape=circle,fill=white,inner sep=1.5pt, draw] {};
	\node at (6) [shape=circle,fill=white,inner sep=1.5pt, draw] {};
	\node at (9) [shape=circle,fill=white,inner sep=1.5pt, draw] {};
	\node at (10) [shape=circle,fill=white,inner sep=1.5pt, draw] {};
\end{tikzpicture}
\end{align*}
where the $\bullet$-nodes are in $K^{(i)}$ and the $\circ$-nodes are in $L^{(i)}$.  For example, $K^{(4)}=\{1,4,7,8,11\}$ and $L^{(4)}=\{2,3,5,6,9,10\}$.  Note that the order in which we resolved the conflicts above minimized the number of crossings in the resulting set partition.  On the other hand, by picking an alternate order of straightening rules, we might have ended up with the $q$-set partition
$$ \begin{tikzpicture}[baseline=.3cm]
	\foreach \x in {1,...,11} 
		\node (\x) at (\x,0) [inner sep=0pt] {$\bullet$};
	\foreach \x in {1,...,11} 
		\node at (\x,-.25) {$\scs\x$};
	\draw (1) .. controls (2.25,1.5) and (4.75,1.5) .. node [auto] {$\scs 1$}  (6); 
	\draw (3) .. controls (3.5,.75) and (4.5,.75) .. node [auto] {$\scs 1$}  (5); 
	\draw (2) .. controls (3.25,1.5) and (5.75,1.5) .. node [auto] {$\scs 1$}  (7); 
	\draw (4) .. controls (5.75,2) and (9.25,2) .. node [auto] {$\scs f$}  (11); 
	\draw (8) .. controls (8.5,.75) and (9.5,.75)  .. node [auto] {$\scs 1$}  (10);
	\draw (7) .. controls (7.5,.75) and (8.5,.75)  .. node [auto] {$\scs 1$}   (9);
	\node at (2) [shape=circle,fill=white,inner sep=1.5pt, draw] {};
	\node at (3) [shape=circle,fill=white,inner sep=1.5pt, draw] {};
	\node at (5) [shape=circle,fill=white,inner sep=1.5pt, draw] {};
	\node at (6) [shape=circle,fill=white,inner sep=1.5pt, draw] {};
	\node at (9) [shape=circle,fill=white,inner sep=1.5pt, draw] {};
	\node at (10) [shape=circle,fill=white,inner sep=1.5pt, draw] {};
\end{tikzpicture} .$$
However, this other set partition restricts to the same character of $U_{K^{(4)}}(q)$.
\end{example}

\begin{remark}
Note that if  we assume there are no (CN) conflicts, or $\lambda\in \cM_K(q)$, then we may simplify the definition of the labeling function $\Lambda_K$ as follows.  If $\lambda\in \cM_K(q)$ and $\tilde\lambda\in \cS_{K^{(\ell)}\cup L^{(\ell)}}(q)$ is obtained by applying (SL), (SR) and (SB), then 
$$\Lambda_K(j\larc{b}k)=(\Lambda_K^L(j\larc{b}k),\Lambda_K^R(j\larc{b}k)),$$
where 
\begin{align*}
\Lambda_K^L(j\larc{b}k)&=\left\{\begin{array}{ll} \circ & \text{if $j\in L^{(\ell)}$,}\\ \bullet, &\text{otherwise.}\end{array}\right.\\
\Lambda_K^R(j\larc{b}k)&=\left\{\begin{array}{ll} \circ & \text{if $k\in L^{(\ell)}$,}\\ \bullet, &\text{otherwise.}\end{array}\right.
\end{align*}

\end{remark}

We now have sufficient tools to prove Theorem  \ref{TensorResult}.

\begin{proof}[Proof of Theorem \ref{TensorResult}] 
First note that 
$$\langle \chi^\lambda\otimes \chi^\mu,\chi^\nu\rangle_{U_K}=\langle \chi^\lambda\otimes \chi^\mu\otimes\chi^{\bar\nu},\One\rangle_{U_K}.$$
 By Theorem \ref{StraighteningTheorem} there exists $\widetilde{\lambda\cup\mu\cup\bar{\nu}}\in\cS_{K'\cup L'}(q)$ such that 
$$\chi^{\lambda\cup\mu\cup\bar{\nu}}=q^{-r_{K'}^{K'\cup L'}(\widetilde{\lambda\cup\mu\cup\bar{\nu}})}\Res_{U_{K'}}^{U_{K'\cup L'}}(\chi^{\widetilde{\lambda\cup\mu\cup\bar{\nu}}}).$$
Thus,
$$\langle \chi^{\lambda\cup\mu\cup\bar\nu},\One\rangle_{U_K}= q^{-r_{K'}^{K'\cup L'}(\widetilde{\lambda\cup\mu\cup\bar{\nu}})} \langle \Res_{U_{K'}}^{U_{K'\cup L'}}(\chi^{\widetilde{\lambda\cup\mu\cup\bar{\nu}}}),\One\rangle_{U_{K'}},$$
and the result follows from Theorem \ref{MainTheorem} and the remark preceding this proof.
\end{proof}

\subsection{Consequences for restriction coefficients}

Theorem \ref{StraighteningTheorem} also allows us to extend Theorem \ref{MainTheorem} to the coefficient of arbitrary supercharacters.

\begin{theorem}\label{RestrictionResult}
Suppose $\lambda\in \cS_L(q)$ and $\mu\in \cS_K(q)$ with $K\subseteq L\subseteq \ZZ_{\geq 1}$ finite sets.  Then
$$\langle \Res_{U_K}^{U_L}(\chi^\lambda), \chi^\mu\rangle\neq 0$$
if and only if $\Gamma_K(\lambda\cup\bar\mu)$ has a complete matching from $V_\bullet$ to $V_\circ$.
\end{theorem}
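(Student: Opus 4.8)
The plan is to reduce to Theorem~\ref{MainTheorem} exactly as the proof of Theorem~\ref{TensorResult} does, the extra ingredients being equation~(\ref{AlreadyDownArcsRestriction}) (to turn the coefficient of $\chi^\mu$ into the coefficient of $\One$) and transitivity of restriction.

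First I would rewrite the inner product over $U_K$. Since $\overline{\chi^\mu}=\chi^{\bar\mu}$ and $\bar\mu\in\cM_K(q)\cap\cM_L(q)$ (as $\bar\mu\in\cS_K(q)$ and $K\subseteq L$), equation~(\ref{AlreadyDownArcsRestriction}) gives $\Res_{U_K}^{U_L}(\chi^{\bar\mu})=q^{r_K^L(\bar\mu)}\chi^{\bar\mu}$; identifying $\chi^\lambda\otimes\chi^{\bar\mu}$ with the $U_L$-character $\chi^{\lambda\cup\bar\mu}$ via~(\ref{ArcDecomposition}) and using that restriction commutes with tensor products,
\[
\langle\Res_{U_K}^{U_L}(\chi^\lambda),\chi^\mu\rangle_{U_K}
=\langle\Res_{U_K}^{U_L}(\chi^\lambda)\otimes\chi^{\bar\mu},\One\rangle_{U_K}
=q^{-r_K^L(\bar\mu)}\,\langle\Res_{U_K}^{U_L}(\chi^{\lambda\cup\bar\mu}),\One\rangle_{U_K},
\]
where $\lambda\cup\bar\mu\in\cM_L(q)$ is a multiset on $L$ (with possibly conflicts of type (CL), (CR), (CB), but none of type (CN)).

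Next I would straighten $\lambda\cup\bar\mu$. By Theorem~\ref{StraighteningTheorem} (with the ambient set there taken to be $L$), there exist a $q$-set partition $\widetilde{\lambda\cup\bar\mu}\in\cS_{\widehat{L}\cup N}(q)$ with $|\widehat{L}|=|L|$, finite sets $\widehat{L}$, $N$, and an integer $s\ge0$ such that $\chi^{\lambda\cup\bar\mu}=q^{-s}\Res_{U_{\widehat{L}}}^{U_{\widehat{L}\cup N}}(\chi^{\widetilde{\lambda\cup\bar\mu}})$ under an isomorphism $U_L\cong U_{\widehat{L}}$. This isomorphism carries $U_K\subseteq U_L$ to some $U_{\widehat{K}}\subseteq U_{\widehat{L}}$ with $\widehat{K}\subseteq\widehat{L}\cup N$, and transitivity of restriction then gives $\Res_{U_K}^{U_L}(\chi^{\lambda\cup\bar\mu})=q^{-s}\Res_{U_{\widehat{K}}}^{U_{\widehat{L}\cup N}}(\chi^{\widetilde{\lambda\cup\bar\mu}})$ as characters of $U_K\cong U_{\widehat{K}}$. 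Substituting into the previous display, $\langle\Res_{U_K}^{U_L}(\chi^\lambda),\chi^\mu\rangle$ equals a nonzero power of $q$ times $\langle\Res_{U_{\widehat{K}}}^{U_{\widehat{L}\cup N}}(\chi^{\widetilde{\lambda\cup\bar\mu}}),\One\rangle$, hence vanishes if and only if the latter does. By Theorem~\ref{MainTheorem} the latter is nonzero if and only if $\Gamma_{\widehat{K}}(\widetilde{\lambda\cup\bar\mu})$ has a complete matching from $V_\bullet$ to $V_\circ$, and one has $\Gamma_{\widehat{K}}(\widetilde{\lambda\cup\bar\mu})=\Gamma_K(\lambda\cup\bar\mu)$ by the remark preceding the proof of Theorem~\ref{TensorResult}, because the perturbation rules (TL), (TR), (TB) and the labeling~(\ref{LabellingRules}) are set up precisely to reproduce the $\bullet/\circ$ pattern and the nesting relation $i<j<k<l$ produced by the straightening rules (SL), (SR), (SB).

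The part needing the most care --- exactly as in Theorem~\ref{TensorResult} --- is the bookkeeping of the index shifts caused by (SL), (SR), (SB): one must verify that the isomorphism $U_L\cong U_{\widehat{L}}$ carries $U_K$ onto $U_{\widehat{K}}$ compatibly with the vertex-labeling function, so that the identity $\Gamma_{\widehat{K}}(\widetilde{\lambda\cup\bar\mu})=\Gamma_K(\lambda\cup\bar\mu)$ holds on the nose. Granting that (it is the substance of the cited remark), the theorem follows formally from~(\ref{AlreadyDownArcsRestriction}), Theorem~\ref{StraighteningTheorem} and Theorem~\ref{MainTheorem}.
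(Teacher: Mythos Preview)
Your proposal is correct and follows essentially the same approach as the paper's proof: convert the inner product to a coefficient of $\One$ via $\overline{\chi^\mu}=\chi^{\bar\mu}$, use (\ref{AlreadyDownArcsRestriction}) to lift $\chi^{\bar\mu}$ up to $U_L$, apply Theorem~\ref{StraighteningTheorem} to $\lambda\cup\bar\mu\in\cM_L(q)$, use transitivity of restriction to pass to $U_{\widehat K}$, and finish with Theorem~\ref{MainTheorem} together with the identification of $\Gamma_{\widehat K}(\widetilde{\lambda\cup\bar\mu})$ with $\Gamma_K(\lambda\cup\bar\mu)$. One minor caveat: the remark you cite is stated for multisets in $\cM_K(q)$ (no (CN) conflicts over $K$), whereas here $\lambda\cup\bar\mu\in\cM_L(q)$ may have arcs with endpoints in $L\setminus K$; the identification of graphs still holds because the general labeling $\Lambda_K$ in (\ref{LabellingRules}) already records ``$j\notin K$'' as $\circ$, which after the index shift corresponds exactly to ``endpoint in $\widehat L\setminus\widehat K$'', but strictly speaking this case is not covered by that remark and needs the one-line additional check you allude to in your final paragraph.
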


\begin{proof}  Note that 
$$\langle \Res_{U_K}^{U_L}(\chi^\lambda), \chi^\mu\rangle=\langle \Res_{U_K}^{U_L}(\chi^\lambda)\otimes \chi^{\bar\mu},\One\rangle.$$
Since $\mu\in\cS_K(q)$, by (\ref{AlreadyDownArcsRestriction})
$$\Res_{U_K}^{U_L}(\chi^\lambda)\otimes \chi^{\bar\mu}=q^{-r_K^L(\bar\mu)} \Res_{U_K}^{U_L}(\chi^\lambda\otimes \chi^{\bar\mu})=q^{-r_K^L(\bar\mu)} \Res_{U_K}^{U_L}(\chi^{\lambda\cup\bar\mu}).$$
By Theorem \ref{StraighteningTheorem} there exist, $L'\subseteq M\subseteq \ZZ_{\geq 1}$, and $\widetilde{\lambda\cup\bar\mu}\in \cS_{M}(q)$ such that as characters of $U_L(q)\cong U_{L'}(q)$,
$$\chi^{\lambda\cup\bar{\mu}}=q^{-r_{L'}^{M}(\widetilde{\lambda\cup\bar{\mu}})}\Res_{U_{L'}}^{U_M}(\chi^{\widetilde{\lambda\cup\bar\mu}}).$$
Thus, there is $K'\subseteq L'$ such that
$$\langle \Res_{U_K}^{U_L}(\chi^\lambda), \chi^\mu\rangle=q^{- r_{L'}^{M}(\widetilde{\lambda\cup\bar{\mu}})-r_K^L(\bar\mu)}\langle \Res_{U_{K'}}^{U_M}(\chi^{\widetilde{\lambda\cup\bar\mu}}), \One\rangle.$$
The result now follows from Theorem \ref{MainTheorem}.
\end{proof}

\section{Explicit coefficients}

This section explicitly computes some of the coefficients of the trivial character in the restriction and the tensor products for a family of examples.

For $\lambda\in \cS_L(q)$ and $K\subseteq L$, let 
$$\cC_K(\lambda)=\{ (i\larc{a}k,j\larc{b}l)\in \lambda\mid i<j<k<l, i,l\notin K, j,k\in K\}.$$
A crossing $(i\larc{a}k,j\larc{b}l)$ is \emph{maximal} in a set $B$ of crossings if $(i'\larc{a}k',j\larc{b}l)\in B$ implies $k'<k$ and $(i\larc{a}k,j'\larc{b}l')\in B$ implies $j<j'$.  

The main theorem of this section gives an explicit computation of $\langle \Res_{U_K}^{U_L}(\chi^\lambda),\One\rangle$ in the case when $\lambda$ has a particular convenient labeling in the sense of (\ref{LabellingRules}).

\begin{theorem} \label{ExplicitCoefficientsTheorem}  Let $K\subseteq L\subseteq \ZZ_{\geq 1}$ be finite subsets.
Suppose $\lambda\in\cS_L(q)$ satisfies $j\larc{a}k\in\lambda$  only if $|\{j,k\}\cap K|=1$.  Then
$$\langle \Res^{U_L}_{U_K} (\chi^\lambda),\One\rangle=q^{r_K^L(\lambda)}q^{|\cC_K(\lambda)|}.$$
\end{theorem}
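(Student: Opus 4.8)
The plan is to work arc by arc. By (\ref{ArcDecomposition}) we have $\Res_{U_K}^{U_L}(\chi^\lambda)=\bigotimes_{i\slarc{a}l\in\lambda}\Res_{U_K}^{U_L}(\chi^{i\slarc{a}l})$, and since each arc $i\larc{a}l\in\lambda$ has exactly one endpoint in $K$, the second or third case of Theorem \ref{ArcRestriction} applies, giving
$$\Res_{U_K}^{U_L}(\chi^{i\slarc{a}l})=q^{r_K^L(i\slarc{a}l)}\Big(\One+\sum_{c'\in\mathrm{Sh}(i\slarc{a}l)}\chi^{c'}\Big),$$
where $\mathrm{Sh}(i\slarc{a}l)$ is the set of arcs obtained by keeping whichever endpoint of $i\larc{a}l$ lies in $K$, moving the other endpoint to any element of $K$ strictly between $i$ and $l$, and assigning any label in $\FF_q^\times$. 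Since $r_K^L$ is additive over arcs by (\ref{DegreeRatio}), $\prod_{i\slarc{a}l\in\lambda}q^{r_K^L(i\slarc{a}l)}=q^{r_K^L(\lambda)}$, so the theorem reduces to the combinatorial claim
$$N(\lambda):=\Big\langle\bigotimes_{c\in\lambda}\big(\One+\sum_{c'\in\mathrm{Sh}(c)}\chi^{c'}\big),\One\Big\rangle=q^{|\cC_K(\lambda)|}.$$

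I would prove this by induction on $|\lambda|$, the base case $\lambda=\emptyset$ being trivial. For the step, peel off the arc $c_0=i_0\larc{a_0}l_0$ whose non-$K$ endpoint is the largest element of $L$ occurring on an arc of $\lambda$; say $i_0\in K$ and $l_0\notin K$ (the other case is symmetric), so every element of $\mathrm{Sh}(c_0)$ has the form $i_0\larc{b}k$ with $k\in K$, $i_0<k<l_0$. Applying Theorem \ref{ArcRestriction} to $c_0$ and dividing through by $q^{r_K^L(\lambda)}$,
$$N(\lambda)=N(\lambda-c_0)+q^{-r_K^L(\lambda-c_0)}\sum_{k\in K,\,i_0<k<l_0}\ \sum_{b\in\FF_q^\times}\big\langle\Res_{U_K}^{U_L}(\chi^{\lambda-c_0}),\chi^{i_0\slarc{b}k}\big\rangle.$$
The first term is handled by induction. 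The obstacle is the sum of single-arc coefficients $\langle\Res_{U_K}^{U_L}(\chi^{\lambda-c_0}),\chi^{i_0\slarc{b}k}\rangle$; unlike the $\One$-coefficient, this is not controlled by Theorem \ref{MainTheorem} alone, so I would carry, as part of the same induction, an explicit formula for $\langle\Res_{U_K}^{U_L}(\chi^\mu),\chi^{j\slarc{b}k}\rangle$ for all $\mu\in\cS_L(q)$ meeting the hypothesis of the theorem and all $j,k\in K$. The right shape of this formula is $q^{r_K^L(\mu)}$ times a weighted count of the ways the arc $j\larc{b}k$ can be produced by collapsing a configuration of arcs of $\mu$ along crossings in $\cC_K$; the essential subtlety is that the (CB) branch of Theorem \ref{ArcTensorProduct} lets a pair of arcs that merge onto a common arc spawn every shorter arc with the same anchored endpoint, so these configurations are genuinely richer than matchings in $\cC_K(\mu)$ (which is exactly why $N(\lambda)$ comes out to $q^{|\cC_K(\lambda)|}$ and not a matching count). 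Pinning down this formula and verifying that it, together with the $\One$-formula, is stable under peeling $c_0$ is the technical heart of the argument, analogous to the role of Lemma \ref{MainLemma} in the proof of Theorem \ref{MainTheorem}.

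Granting the strengthened formula, the induction closes: the inner double sum above, by a geometric-series identity (the $b$-sum producing a factor $q-1$ and the $k$-sum a factor $1+q+\cdots+q^{s-1}$, indexed by how the arc $i_0\larc{b}k$ routes through the straddling arcs), works out to $q^{r_K^L(\lambda-c_0)}\,q^{|\cC_K(\lambda-c_0)|}\,(q^{s}-1)$, where $s=|\{\,i\larc{a}k\in\lambda-c_0\mid i<i_0<k,\ i\notin K,\ k\in K\,\}|$ counts precisely the crossings of $\cC_K(\lambda)$ through $c_0$; hence $N(\lambda)=q^{|\cC_K(\lambda-c_0)|}q^{s}=q^{|\cC_K(\lambda)|}$ since $|\cC_K(\lambda)|=|\cC_K(\lambda-c_0)|+s$. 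An alternative that sidesteps the strengthened formula is to expand $\langle\Res_{U_K}^{U_L}(\chi^\lambda),\One\rangle=|U_K|^{-1}\sum_{u\in U_K}\chi^\lambda(u)$ directly, using that for an arc $i\larc{a}l$ with one endpoint in $K$ the value $\chi^{i\slarc{a}l}(u)$ is either $0$ or an explicit power of $q$ depending only on the superclass of $u$, and summing over the superclasses of $U_K$ with their known sizes; this trades the character-theoretic bookkeeping for a counting argument over set partitions of $K$, and I would expect either route to require comparable effort.
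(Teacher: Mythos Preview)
Your setup is correct through the reduction to $N(\lambda)=q^{|\cC_K(\lambda)|}$: the arc-by-arc factorisation and the identification of the relevant cases in Theorem \ref{ArcRestriction} match the paper's equations (\ref{FirstIterationProduct})--(\ref{FirstIterationSum}). From that point on, however, your argument and the paper's diverge, and yours has a genuine gap.

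You propose to induct on $|\lambda|$ by peeling off an extremal arc $c_0$, which forces you to control the single-arc coefficients $\langle\Res_{U_K}^{U_L}(\chi^{\lambda-c_0}),\chi^{i_0\slarc{b}k}\rangle$. You correctly recognise that this is not covered by Theorem \ref{MainTheorem}, and you say you would carry an explicit formula for these quantities as part of the induction. But you never state the formula, only that it has ``the right shape'' as a weighted count of collapsing configurations, and you then write ``Granting the strengthened formula, the induction closes.'' That is precisely where the work lies. The (CB) branch of Theorem \ref{ArcTensorProduct} that you flag is exactly what makes these single-arc coefficients delicate: when two arcs of $\gamma\in\cI_K(\lambda-c_0)$ land on the same pair $(j,k)$, the resolution depends on whether their labels sum to zero, and this feeds back into which further arcs are produced. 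Without a precise statement and proof of the auxiliary formula, the inductive step is an assertion, not an argument. (There is also a smaller issue: your count $s$ omits the condition $k<l_0$, which is not automatic from your extremality choice since $k\in K$.)

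The paper avoids this difficulty entirely. Rather than peeling arcs, it expands the product (\ref{FirstIterationProduct}) once and for all into a sum over $\gamma\in\cI_K(\lambda)$, then characterises exactly which $\gamma$ contribute via Lemma \ref{NonzeroOneLemma}: those with $\Lambda_K(\gamma)\subseteq\{(\circ,\bullet),(\bullet,\circ)\}$. The key observation is that any such $\gamma$, after straightening via (SL), (SR), (SB), again satisfies the hypothesis of the theorem, so one can iterate to produce a sequence $(\gamma_0,\gamma_1,\ldots,\gamma_\ell)$ terminating at $\emptyset$; equation (\ref{SequenceIdentity}) says $N(\lambda)$ is the number of such sequences. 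The proof then constructs an explicit bijection between these sequences and functions $f:\cC_K(\lambda)\to\FF_q$, using Lemmas \ref{GoingToHalfLemma} and \ref{ComeFromCrossingLemma} to track how each crossing in $\cC_K(\lambda)$ governs one $\FF_q$-valued choice along the sequence. This bijective approach never needs a formula for non-trivial coefficients of $\Res(\chi^\mu)$, which is exactly the obstacle your inductive scheme runs into.
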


\begin{example} If  $K=\{4,5,7,8,9,10\}$ and 
$$\lambda=\begin{tikzpicture}[baseline = .1cm]
    \foreach \y in {1,2,3,6,11,12}
        \node(\y) at (\y/2,0) [inner sep=0pt] {$\circ$};
    \foreach \z in {4,5,7,8,9,10}
        \node(\z) at (\z/2,0) [inner sep=0pt] {$\bullet$};
    \foreach \x in {1,...,12}
        \node at (\x/2,-.25) [inner sep=0pt] {$\scs\x$};
    \draw (1) .. controls (1.75/2, .6*1.5) and (4.25/2, .6*1.5) .. node [above] {$\scs a $} (5);
    \draw (2) .. controls (3/2, .8*1.5) and (6/2, .8*1.5) .. node [above] {$\scs b $}  (7);
    \draw (3) .. controls (4.5/2, .9*1.5) and (7.75/2, .9*1.5) .. node [above,pos=.4] {$\scs c $}  (9);
    \draw (4) .. controls (6/2, 1*1.5) and (10/2, 1*1.5) .. node [above] {$\scs d $}  node [pos=.21] {$\scs \bigcirc$} node [pos=.07] {$\scs \bigcirc$} node [pos=.33] {$\scs \bigcirc$} (12);
    \draw (6) .. controls (7/2, .7*1.5) and (9/2, .7*1.5) .. node [above] {$\scs e $}  (10) ;
    \draw (8) .. controls (8.75/2, .6*1.5) and (10.25/2, .6*1.5) .. node [above=-3pt] {$\scs f $}  node [pos=.16] {$\scs \bigcirc$} node [pos=.35] {$\scs\bigcirc$} (11);
\end{tikzpicture}$$
where the elements of $\cC_K(q)$ are circled, then Theorem \ref{ExplicitCoefficientsTheorem} says
$$\langle \Res^{U_L}_{U_K} (\chi^\lambda),\One\rangle= q^{7} q^{5}.$$
\end{example}

To prove Theorem \ref{ExplicitCoefficientsTheorem} we note that 
$$\Res^{U_L}_{U_K} (\chi^\lambda)=\bigotimes_{i\slarc{a}l\in \lambda} \Res^{U_L}_{U_K} (\chi^{i\slarc{a}l})$$
so by our assumption that $j\larc{a}k\in\lambda$ only if $|\{j,k\}\cap K|=1$, we can write
\begin{align}
\Res^{U_L}_{U_K} (\chi^\lambda)& =q^{r_K^L(\lambda)}\bigotimes_{i\slarc{a}l\in \lambda\atop i\notin K} \bigg(\One+ \sum_{i<j<l\atop j\in K, b\in\FF_q^\times} \chi^{j\slarc{b}l}\bigg)\otimes \bigotimes_{i\slarc{a}l\in \lambda\atop l\notin K} \bigg(\One+\sum_{i<k<l\atop k\in K, b\in\FF_q^\times} \chi^{i\slarc{b}k}\bigg) \label{FirstIterationProduct}\\
&=q^{r_K^L(\lambda)}\sum_{\gamma}c_\gamma \chi^\gamma, \label{FirstIterationSum}
\end{align}
where $c_\gamma\in \{0,1\}$, and each $\gamma\in\cM_K(q)$ with $c_\gamma=1$ comes from one choice of terms in the product (\ref{FirstIterationProduct}).  In other words, if
$$\mathcal{I}_{i\slarc{a}l}(\lambda)=\left\{\begin{array}{ll} \{\emptyset,j\larc{b}l\mid j\in K\}, & \text{if $i\notin K$,}\\ \{\emptyset,i\larc{b}k\mid k\in K\}, & \text{if $l\notin K$,}\end{array}\right.$$
then there is an ordering of the arcs of $\gamma$ such that as a sequence
$$\gamma\in \cI_K(\lambda),$$
where $\cI_K(\lambda)$ is the Cartesian product of the sets $\cI_{i\slarc{a}l}(\lambda)$ for all $i\larc{a}l\in\lambda$ in some fixed order.
  If we write $\gamma\in \cI_K(\lambda)$, we assume we have fixed some ordering on $\gamma$ and we will take $j\larc{b}k\in \gamma\cap \cI_{i\slarc{a}l}(\lambda)$ to mean that the $i\larc{a}l$th arc of $\gamma$ is $j\larc{b}k$.  Note that any such $\gamma\in \cI_K(\lambda)$ will satisfy $0\leq m_{jk}(\gamma)\leq 2$ for all $j<k\in K$.   For example, if 
  $$\lambda=\begin{tikzpicture}[baseline = .1cm]
    \foreach \y in {1,2,3,6,11,12}
        \node(\y) at (\y/2,0) [inner sep=0pt] {$\circ$};
    \foreach \z in {4,5,7,8,9,10}
        \node(\z) at (\z/2,0) [inner sep=0pt] {$\bullet$};
    \foreach \x in {1,...,12}
        \node at (\x/2,-.25) [inner sep=0pt] {$\scs\x$};
    \draw (1) .. controls (1.75/2, .6*1.5) and (4.25/2, .6*1.5) .. node [above] {$\scs a $} (5);
    \draw (2) .. controls (3/2, .8*1.5) and (6/2, .8*1.5) .. node [above] {$\scs b $}  (7);
    \draw (3) .. controls (4.5/2, .9*1.5) and (7.75/2, .9*1.5) .. node [above,pos=.4] {$\scs c $}  (9);
    \draw (4) .. controls (6/2, 1*1.5) and (10/2, 1*1.5) .. node [above] {$\scs d $}   (12);
    \draw (6) .. controls (7/2, .7*1.5) and (9/2, .7*1.5) .. node [above] {$\scs e $}  (10) ;
    \draw (8) .. controls (8.75/2, .6*1.5) and (10.25/2, .6*1.5) .. node [above=-3pt] {$\scs f $}   (11);
\end{tikzpicture}, 
\qquad \text{and}\qquad \gamma=\begin{tikzpicture}[baseline = .1cm]
    \foreach \z in {4,5,7,8,9,10}
        \node(\z) at (\z/2,0) [inner sep=0pt] {$\bullet$};
    \foreach \x in {4,5,7,8,9,10}
        \node at (\x/2,-.25) [inner sep=0pt] {$\scs\x$};
    \draw (4) .. controls (4.25/2, .5) and (4.75/2, .5) .. node [above=2pt,pos=.95] {$\scs a' $} (5);
    \draw (4) .. controls (4.75/2, 1) and (8.25/2, 1) .. node [above=-2pt] {$\scs -c' $}  (9);
    \draw (4) .. controls (4.75/2, 1.5) and (8.25/2, 1.5) .. node [above] {$\scs c' $}   (9);
    \draw (8) .. controls (8.5/2, .75) and (9.5/2, .75) .. node [above=-2pt] {$\scs -e' $} (10) ;
    \draw (8) .. controls (8.5/2, 1.25) and (9.5/2, 1.25) .. node [above] {$\scs e' $}   (10);
\end{tikzpicture},$$
then
\begin{align*}
(4\larc{a'} 5,\emptyset, &4\larc{c'}9,4\larc{-c'}9,8\larc{e'}10,8\larc{-e'} 10)\\
&\in \cI_{\{4,5,7,8,9,10\}}(\lambda)=\cI_{1\slarc{a}5}(\lambda)\times \cI_{2\slarc{b}7}(\lambda)\times \cI_{3\slarc{c}9}(\lambda)\times \cI_{4\slarc{d}12}(\lambda)\times \cI_{6\slarc{e}10}(\lambda)\times \cI_{8\slarc{f}11}(\lambda),
\end{align*}
and $4\larc{c'}9\in  \cI_{3\slarc{c}9}(\lambda)$.

The first lemma examines the structure of those $\gamma\in \cM_K(q)$ that satisfy $\Lambda_K(\gamma)\subseteq \{(\bullet,\circ),(\circ,\bullet)\}$.

\begin{lemma}\label{HalfInLemma} Let $\gamma\in\cM_K(q)$.  Then $\Lambda_K(\gamma)\subseteq \{(\circ,\bullet),(\bullet,\circ)\}$ if and only if for all $j\larc{a}k\in \gamma$, either
\begin{enumerate}
\item[(1)] $m_{jk}(\gamma)=2$, $\wt_{jk}(\gamma)=0$, $i\larc{b}k\in \gamma$ implies $j\leq i$, and $j\larc{b}l\in \gamma$ implies $l\leq k$,
\item[(2)] $m_{jk}(\gamma)=1$ and there exists either $j\larc{b}l\in \gamma$ with $l>kl$ or $i\larc{b}k\in \gamma$ with $i<j$, but not both.
\end{enumerate}
\end{lemma}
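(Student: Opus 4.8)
The plan is to analyze $\Lambda_K(\gamma)$ one block $R_{jk}$ at a time, using the perturbation rules (TL), (TR), (TB). Since $\gamma\in\cM_K(q)$, every endpoint of every arc lies in $K$, so the clauses ``$j\notin K$'' and ``$k\notin K$'' in (\ref{LabellingRules}) are vacuous; thus $\Lambda_K^L(j\larc{b}k)=\bullet$ precisely when $j\larc{b}k$ is the topmost arc at left endpoint $j$ after perturbing, and $\Lambda_K^R(j\larc{b}k)=\bullet$ precisely when it is the topmost arc at right endpoint $k$. By (TL), the arcs of $\gamma$ sharing left endpoint $j$ are stacked with the one reaching furthest right on top, ties (i.e. a block $R_{jk}$ with $m_{jk}(\gamma)\ge 2$) being broken internally by (TB); symmetrically for (TR). Hence an arc of $R_{jk}$ can be topmost at $j$ only if no $j\larc{b}l\in\gamma$ has $l>k$, and topmost at $k$ only if no $i\larc{b}k\in\gamma$ has $i<j$.

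Next I would run through the cases for a nonempty block $R_{jk}$. If $m_{jk}(\gamma)\ge 2$ and $\wt_{jk}(\gamma)\ne 0$, then (TB) stacks $R_{jk}$ in parallel, so each arc of $R_{jk}$ other than the topmost has a strictly lower left endpoint and a strictly lower right endpoint within the block and is therefore labelled $(\circ,\circ)$; the same happens to the bottom $m_{jk}(\gamma)-2$ arcs when $m_{jk}(\gamma)\ge 3$ and $\wt_{jk}(\gamma)=0$. In both situations $\Lambda_K(\gamma)\not\subseteq\{(\circ,\bullet),(\bullet,\circ)\}$, and correspondingly neither (1) nor (2) can hold for the arcs of $R_{jk}$ (since (1) forces $m_{jk}=2$, $\wt_{jk}=0$ and (2) forces $m_{jk}=1$). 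So it remains to treat $m_{jk}(\gamma)=1$ and $m_{jk}(\gamma)=2$ with $\wt_{jk}(\gamma)=0$. For $m_{jk}(\gamma)=1$, with sole arc $j\larc{a}k$: it is topmost at $j$ iff no $j\larc{b}l\in\gamma$ has $l>k$, and topmost at $k$ iff no $i\larc{b}k\in\gamma$ has $i<j$; hence $\Lambda_K(j\larc{a}k)\in\{(\circ,\bullet),(\bullet,\circ)\}$ iff exactly one of these obstructions is present, which is exactly (2). For $m_{jk}(\gamma)=2$, $\wt_{jk}(\gamma)=0$: the $\wt=0$ picture of (TB) (with $l=2$) crosses the two arcs, so within $R_{jk}$ one arc $\alpha$ has the higher left endpoint and the lower right endpoint while the other arc $\beta$ has the lower left endpoint and the higher right endpoint; thus $\Lambda_K^R(\alpha)=\circ$ and $\Lambda_K^L(\beta)=\circ$ unconditionally, while $\Lambda_K^L(\alpha)=\bullet$ iff no $j\larc{b}l\in\gamma$ has $l>k$ and $\Lambda_K^R(\beta)=\bullet$ iff no $i\larc{b}k\in\gamma$ has $i<j$. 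So $\{\Lambda_K(\alpha),\Lambda_K(\beta)\}\subseteq\{(\circ,\bullet),(\bullet,\circ)\}$ iff both obstructions are absent, i.e. iff (1) holds (both arcs of $R_{jk}$ share these conditions, having the same endpoints).

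Assembling the pieces: $\Lambda_K(\gamma)\subseteq\{(\circ,\bullet),(\bullet,\circ)\}$ iff every nonempty block $R_{jk}$ has all of its arcs labelled in that set iff, by the case analysis, each block is either a singleton satisfying (2) or a weight-zero pair satisfying (1) — which is precisely the per-arc statement of the lemma. The one delicate step is the $m_{jk}=2$, $\wt_{jk}=0$ case: one must verify that the crossed configuration imposed by (TB) really assigns complementary internal ``top'' status to the two arcs, and that the (TL)/(TR) obstructions interact with this internal order exactly as stated — in particular that no other arc of $\gamma$ can sit between an $R_{jk}$ arc and the top of its stack without being of the form $j\larc{b}l$ with $l>k$ or $i\larc{b}k$ with $i<j$. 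This amounts to a careful reading of the perturbation rules; everything else is bookkeeping.
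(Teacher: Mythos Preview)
Your proof is correct and follows essentially the same approach as the paper: a case analysis on $m_{jk}(\gamma)$ using the perturbation rules (TL), (TR), (TB) together with the labeling rules. The paper's proof is a terse sketch of exactly this argument, whereas you have spelled out the cases and the converse in full detail; the delicate step you flag in the $m_{jk}=2$, $\wt_{jk}=0$ case is handled by your own observation that (TL) forces any arc stacked above $R_{jk}$ at $j$ to be of the form $j\larc{b}l$ with $l>k$, and symmetrically at $k$.
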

\begin{proof}
Suppose $\Lambda_K(\gamma)\subseteq \{(\circ,\bullet),(\bullet,\circ)\}$. Let $j\larc{a}k\in\gamma$. By (TB) and (\ref{LabellingRules}), a large multiplicity $m_{jk}(\gamma)>2$ implies $(\circ,\circ)\in \Lambda_K(\gamma)$.   If $m_{jk}(\gamma)=2$, then by (TB) and (\ref{LabellingRules}) $\wt_{jk}(\lambda)=0$,  and there cannot be  any endpoint above the endpoints of $j\larc{a}k$ in the perturbed version of $\gamma$.  If $\wt_{jk}(\gamma)=1$, then exactly one of the endpoints must have an endpoint above it in the perturbed diagram.  

The converse is immediate.
\end{proof}

The following will allow us to set up a recursion, since the $\gamma\in \cI_K(\lambda)$ of interest will satisfy $\Lambda_K(\gamma)\subseteq \{(\bullet,\circ),(\circ,\bullet)\}$.

\begin{lemma} \label{NonzeroOneLemma} Suppose $\lambda\in\cS_L(q)$ satisfies $j\larc{a}k\in\lambda$ only if $|\{j,k\}\cap K|=1$, and let $\gamma\in \cI_K(\lambda)$.  Then $\langle \chi^\gamma,\One\rangle\neq 0$ if and only if $\Lambda_K(\gamma)\subseteq \{(\circ,\bullet),(\bullet,\circ)\}.$
\end{lemma}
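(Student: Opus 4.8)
The plan is to derive Lemma \ref{NonzeroOneLemma} from the matching criterion behind Theorem \ref{TensorResult}, and then to pin down, using the special structure of $\cI_K(\lambda)$, exactly which graphs $\Gamma_K(\gamma)$ can occur. First observe that the proof of Theorem \ref{TensorResult} — straighten $\gamma$ by Theorem \ref{StraighteningTheorem}, apply Theorem \ref{MainTheorem}, and translate back via the remark preceding that proof, which identifies the vertex classes $V_\bullet$, $V_\circ$ of $\Gamma_{K'}(\tilde\gamma)$ with the arcs of $\gamma$ labelled $(\bullet,\bullet)$, $(\circ,\circ)$ — nowhere uses that the input multiset has the form $\lambda\cup\mu\cup\bar\nu$; it applies verbatim to any $\gamma\in\cM_K(q)$ and shows that $\langle\chi^\gamma,\One\rangle\ne0$ if and only if $\Gamma_K(\gamma)$ has a complete matching from $V_\bullet$ to $V_\circ$. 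Since the condition $\Lambda_K(\gamma)\subseteq\{(\circ,\bullet),(\bullet,\circ)\}$ says exactly that $V_\bullet=V_\circ=\emptyset$, the lemma reduces to the purely combinatorial claim: for $\gamma\in\cI_K(\lambda)$, the graph $\Gamma_K(\gamma)$ admits a complete matching from $V_\bullet$ to $V_\circ$ if and only if $V_\bullet=V_\circ=\emptyset$.

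The direction $(\Leftarrow)$ is trivial: when $V_\bullet=\emptyset$ the empty map is a complete matching. For $(\Rightarrow)$ I would first record an auxiliary fact, valid for every $\gamma\in\cM_K(q)$: if $V_\circ\ne\emptyset$ then $V_\bullet\ne\emptyset$. Starting from a $(\circ,\circ)$-arc, repeatedly pass to the unique arc that is topmost on the left at its own left endpoint, and then to the unique arc that is topmost on the right at that arc's right endpoint; using the stacking rules (TL)--(TB) and (\ref{LabellingRules}) one checks that every second such step strictly increases the right endpoint reached, so the walk terminates, and it can only terminate at an arc both of whose labels are $\bullet$. Hence $V_\bullet=\emptyset$ forces $V_\circ=\emptyset$, and it remains to prove that for $\gamma\in\cI_K(\lambda)$ a complete matching can exist only when $V_\bullet=\emptyset$.

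This last step is the heart of the matter, and it is where the provenance of the arcs of $\gamma$ is used. Each arc of $\gamma$ arises from a single arc $i\larc{a}l$ of $\lambda$ with $|\{i,l\}\cap K|=1$, keeping the endpoint of $i\larc{a}l$ that lies in $K$ and replacing the other by some point of $K$ strictly between $i$ and $l$. Two consequences I would exploit: (i) $m_{jk}(\gamma)\le2$ for all $j,k$, and a multiplicity-$2$ bunch at $(j,k)$ consists of one arc that kept the endpoint $k$ of a $\lambda$-arc $i\larc{}k$ with $i<j$, $i\notin K$, together with one that kept the endpoint $j$ of a $\lambda$-arc $j\larc{}l$ with $l>k$, $l\notin K$; and (ii) for any $j$, besides the at most one arc of $\gamma$ that kept $j$ as an endpoint, every other arc of $\gamma$ with left (resp.\ right) endpoint $j$ comes from a $\lambda$-arc whose corresponding endpoint lies strictly to the left of (resp.\ right of) $j$ and outside $K$ — otherwise two arcs of $\lambda$ would share a left (resp.\ right) endpoint, a (CL) (resp.\ (CR)) conflict, contradicting $\lambda\in\cS_L(q)$. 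Given a putative complete matching $\phi\colon V_\bullet\hookrightarrow V_\circ$, I would take $\beta\in V_\bullet$ with $\phi(\beta)$ minimal in the nesting order among the image, and follow from $\phi(\beta)$ — exactly as in the auxiliary fact — the strictly longer arc above it at its left endpoint and the strictly more leftward arc above it at its right endpoint, generating $(\bullet,\bullet)$-arcs; the provenance constraint (ii) then forces at least one of these to lie outside $\phi(V_\bullet)$, contradicting completeness of $\phi$. (Equivalently, one exhibits a set $S\subseteq V_\bullet$ with $|N_{\Gamma_K(\gamma)}(S)|<|S|$ and applies Hall's theorem.) It is convenient to first remove the multiplicity-$2$ bunches using (i) and (TB), reducing to the case where $\gamma$ is a genuine $q$-set partition, whose arcs form a fan at each vertex and for which (ii) has its cleanest form.

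The reduction to the matching criterion, the easy direction, and the auxiliary fact $V_\circ\ne\emptyset\Rightarrow V_\bullet\ne\emptyset$ are all formal consequences of Theorems \ref{StraighteningTheorem}, \ref{MainTheorem} and \ref{TensorResult} together with Lemma \ref{HalfInLemma}. The main obstacle is the final exchange/Hall argument of the third paragraph: isolating the right invariant that converts ``$\gamma\in\cI_K(\lambda)$'' into the impossibility of covering a nonempty $V_\bullet$, while correctly tracking the perturbation rules (TL)--(TB) and the multiplicity-$2$ bunches.
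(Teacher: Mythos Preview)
Your overall strategy --- reduce via Theorems \ref{StraighteningTheorem} and \ref{MainTheorem} to the matching criterion for $\Gamma_K(\gamma)$, then argue combinatorially that for $\gamma \in \cI_K(\lambda)$ a complete matching forces $V_\bullet = V_\circ = \emptyset$ --- is exactly the paper's, and your reduction and the easy direction are correct. But both combinatorial claims you make have real problems, and you have correctly flagged the third paragraph as the obstacle.

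The auxiliary fact ``$V_\circ \neq \emptyset \Rightarrow V_\bullet \neq \emptyset$ for every $\gamma \in \cM_K(q)$'' is false. Take $\gamma = \{1\larc{a}5, 1\larc{b}5, 1\larc{c}5\}$ with $a+b+c=0$ (this is the paper's own closing Remark, where $\langle\chi^\gamma,\One\rangle=3q-2\ne0$): by (TB) the bottom copy is $(\circ,\circ)$ while the top two cross and receive labels $(\bullet,\circ)$, $(\circ,\bullet)$, so $V_\circ \neq \emptyset$ but $V_\bullet = \emptyset$, and your proposed walk cycles forever between the two top arcs. The statement does hold once $\gamma \in \cI_K(\lambda)$, but only because $m_{jk}(\gamma)\le 2$ together with the provenance structure you record in (i)--(ii); you cannot separate it out as a lemma about arbitrary multisets.

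More seriously, the argument of your third paragraph does not produce a contradiction. You take $\phi(\beta)$ \emph{minimal} in nesting, but any $(\bullet,\bullet)$-arc $\alpha$ reached by walking upward from $\phi(\beta)$ weakly contains $\phi(\beta)$, and its would-be match $\phi(\alpha)$ then strictly contains $\alpha$, hence strictly contains $\phi(\beta)$ --- perfectly consistent with minimality. (Also, ``$(\bullet,\bullet)$-arcs lie outside $\phi(V_\bullet)$'' is a type mismatch, since $\phi(V_\bullet)\subseteq V_\circ$.) The paper instead chooses a $(\circ,\circ)$-arc $\beta$ that is \emph{maximal} in nesting among all of $V_\circ$, not merely in the image of a hypothetical matching. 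It then uses provenance to drive the walk and prevent looping: $\beta$ is (WLOG) the unique arc of $\gamma$ that kept the right endpoint $k$ of its parent $\lambda$-arc, so any other arc of $\gamma$ ending at $k$ comes from some $i'\larc{}l' \in \lambda$ with $i' \in K$, $l' \notin K$; one continues alternately left and right, each new arc weakly containing $\beta$, and the chain must terminate at some $(\bullet,\bullet)$-arc $\alpha$. This $\alpha$ is then an isolated vertex of $\Gamma_K(\gamma)$: any neighbor would be a $(\circ,\circ)$-arc strictly containing $\alpha$, hence strictly containing $\beta$, contradicting the maximality of $\beta$. A single isolated vertex in $V_\bullet$ already rules out a complete matching; no Hall-type counting is needed.
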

\begin{proof}
Suppose that $\langle \chi^\gamma,\One\rangle\neq 0$.   If $\Lambda_K(\gamma)\nsubseteq \{(\circ,\bullet),(\bullet,\circ)\}$, then Theorem \ref{TensorResult}  implies $(\circ,\circ)\in \Lambda_K(\gamma)$ (since for every $(\bullet,\bullet)$ there must at least one $(\circ,\circ)$).  In this case, since  $0\leq m_{jk}(\gamma)\leq 2$ the arc mapping to $(\circ,\circ)$ must be in one of the two cases,
$$
\begin{tikzpicture}[baseline=.1cm]
	\node at (2,-.25) {$\scs i$};
	\node at (4,-.25) {$\scs j$};
	\foreach \y in {1,2,4,5}
		\node (\y) at (\y,0) [inner sep=0pt] {$\bullet$};
	\draw (1) .. controls (1.75,1) and (3.25,1) ..  node [auto] {$\scs b$} (4);
    \draw (2) .. controls (2.75,1) and (4.25,1) .. node [auto] {$\scs c$} (5);
    \draw (2) .. controls (2.5,.5) and (3.5,.5) .. node [above=-2pt] {$\scs a$} (4);
\end{tikzpicture}
\overset{\Lambda_K}\rightarrow
\begin{tikzpicture}[baseline=.1cm]
	\node at (2,-.25) {$\scs i$};
	\node at (4,-.25) {$\scs j$};
	\foreach \y in {1,5}
		\node (\y) at (\y,0) [inner sep=0pt] {$\bullet$};
    \foreach \z in {2,4}
        \node (\z) at (\z, 0) [inner sep = 0pt]{$\circ$};
    \foreach \x in {2,4}
        \node (B\x) at (\x, .3) [inner sep = 0pt]{$\bullet$};
	\draw (1) .. controls (1.75,1) and (3.25,1) ..  node [auto] {$\scs b$}  (B4);
    \draw (B2) .. controls (2.75,1) and (4.25,1) .. node [auto] {$\scs c$} (5);
    \draw (2) .. controls (2.5,.5) and (3.5,.5) .. node  [above=-1pt] {$\scs a$} (4);
\end{tikzpicture},\quad
\begin{tikzpicture}[baseline=.1cm]
	\node at (1,-.25) {$\scs i$};
	\node at (3,-.25) {$\scs j$};
    \foreach\x in {1,3}
        \node(\x) at (\x,0) [inner sep=0pt] {$\bullet$};
    \draw (1) .. controls (1.5,1) and (2.5,1) .. node [auto] {$\scs b$} (3);
    \draw (1) .. controls (1.75,.5) and (2.25,.5) .. node [above] {$\scs a$}  (3);
\end{tikzpicture}
\overset{\Lambda_K}\rightarrow
\begin{tikzpicture}[baseline = .1cm]
	\node at (1,-.25) {$\scs i$};
	\node at (3,-.25) {$\scs j$};
    \foreach\x in {1,3}
        \node(\x) at (\x,0) [inner sep=0pt] {$\circ$};
    \foreach\x in {1,3}
        \node(B\x) at (\x,.3) [inner sep = 0pt] {$\bullet$};
    \draw (1) .. controls (1.4,.5) and (2.6,.5) .. node [above=-2pt] {$\scs a$}  (3);
    \draw (B1) .. controls (1.4,.8) and (2.6,.8) .. node [auto] {$\scs b$} (B3);
\end{tikzpicture}$$
where $a+b\neq 0$.

Suppose $\beta=j\larc{a}k$  satisfies $\Lambda_K(\beta)=(\circ,\circ)$ and 
\begin{equation}\label{MaximalityCondition}
\text{if $\Lambda_K(i\larc{b}l)=(\circ,\circ)$ with $i\leq j<k\leq l$, then $i\larc{b}l=\beta$.}
\end{equation}
  WLOG assume that $\beta \in \cI_{i\slarc{a}k}(\lambda)$ with  $\Lambda_K(i\larc{a}k)=(\circ,\bullet)$.  Since $\lambda$ is a $q$-set partition, the other arc $\alpha$ in $\gamma$ ending at $k$ must satisfy $\alpha\in \cI_{i'\slarc{a'}l'}(\lambda)$ with $\Lambda_K(i'\larc{a'}l')=(\bullet,\circ)$.  If $\Lambda_K(\alpha)=(\bullet,\bullet)$ we arrive at a contradiction since by (\ref{MaximalityCondition}), $V_\circ$ will have no edge connecting to $\alpha$ in $V_\bullet$.   Else, $\Lambda_K(\alpha)=(\circ,\bullet)$, so there must exist $\alpha_1\in \gamma$ which shares a left endpoint with $\alpha$ and a right endpoint $>k$.  But, to avoid contradiction $\Lambda_K(\alpha_1)=(\bullet,\circ)$, so there must exist $\alpha_2\in \gamma$ sharing the right endpoint with $\alpha_1$ and with left endpoint left of $\alpha$. When we iterate, we obtain a picture of the form
  $$\begin{tikzpicture}[baseline=.3cm]
	\foreach \x in {1,...,7} 
		\node (\x) at (\x/2,0) [inner sep=0pt] {$\bullet$};
	\foreach \x in {1,...,7} 
		\node at (\x/2,-.25) {$\scs \x$};
	\node at (2,1.75) {$\scs\vdots$};
	\draw (1) .. controls (2.25/2,1.5) and (5.75/2,1.5) .. node [above=-1pt] {$\scs\alpha_3$}  (7); 
	\draw (1) .. controls (2/2,1.25) and (5/2,1.25) .. node [above=-1pt,pos=.7] {$\scs\alpha_2$}  (6); 
	\draw (2) .. controls (2.75/2,1) and (5.25/2,1) .. node [above=-1pt,pos=.3] {$\scs\alpha_1$}  (6); 
	\draw (2) .. controls (2.5/2,.75) and (4.5/2,.75) .. node [above=-1pt,pos=.7] {$\scs\alpha$}  (5); 
	\draw (3) .. controls (3.25/2,.5) and (4.75/2,.5) .. node [below=-2pt] {$\scs\beta$}  (5); 
\end{tikzpicture}$$
Since there are a finite number of nodes,  eventually there must be $\alpha_f$ with $\Lambda_K(\alpha_f)=(\bullet,\bullet)$, which leads to the same contradiction as before.  Thus, if $\langle \chi^\gamma, \One\rangle\neq 0$, then $\Lambda_K(\gamma)\subseteq \{(\circ,\bullet),(\bullet,\circ)\}$. 

Checking the converse is straightforward.
\end{proof}

By Lemma \ref{NonzeroOneLemma}, if $\gamma\in\cI_K(\lambda)$ and $\langle \chi^\gamma,\One\rangle\neq 0$, then $\Lambda_K(\gamma)\subseteq \{(\bullet,\circ),(\circ,\bullet)\}$.  It follows that if $\tilde\gamma\in\cS_{K'\cup L'}(q)$ is obtained by applying (SL), (SR), and (SB) to $\gamma$ as in Theorem \ref{StraighteningTheorem}, then $j\larc{a}k\in \tilde\gamma$ only if $|\{j,k\}\cap K'|=1$.  Thus, we can iterate by finding $\gamma_2\in\cI_{K'}(\tilde\gamma)$.  
This process gives a sequence $(\gamma_0,\gamma_1,\ldots,\gamma_\ell)$ with $\gamma_0=\lambda$ which terminates at $\gamma_\ell=\emptyset$.  In fact,
\begin{equation}\label{SequenceIdentity}
\langle \Res_{U_K}^{U_L}(\chi^\lambda),\One\rangle=q^{r_K^L(\lambda)}\left|\left\{(\gamma_0,\ldots, \gamma_\ell)\bigg| \begin{array}{@{}l@{}} \gamma_0=\lambda, \gamma_\ell=\emptyset,\ell\in\ZZ_{\geq 0}, \\ \gamma_{k+1}\in \cI_{K_k}(\tilde\gamma_k), K_0=K\end{array}\right\}\right|,
\end{equation}
where $\tilde\gamma_j \in \cS_{K_j\cup L_j}(q)$ is a set-partition obtained by applying (SL), (SR), (SB) to $\gamma_j$ such that 
$$\chi^{\gamma_j}=\Res_{U_{K_j}}^{U_{K_j\cup L_j}}(\chi^{\tilde\gamma_j}).$$

\begin{remark}
Note that the sequence $(\gamma_0,\ldots, \gamma_\ell)$ is not a path in $\cP$.  Even if we reindex the endpoints of $\gamma_j$ via the order preserving  bijection $K_{j-1}\rightarrow K$, the resulting sequence will not be a path in $\cP$.  However, after re-indexing the sequence will be a sub-path of a path in $\cP$ (skipping some steps).  That is, we avoid the difficulties of choosing a canonical order for resolving conflicts by resolving the conflicts in ``clumps."  
\end{remark}

The final two lemmas more closely study the relationship between $\gamma$ and $\lambda$, where $\gamma\in\cI_K(\lambda)$.

\begin{lemma}  \label{GoingToHalfLemma}
Let $K\subseteq L\subseteq \ZZ_{\geq 1}$ be finite sets.  Suppose $\lambda\in\cS_L(q)$ satisfies $j\larc{a}k\in\lambda$ only if $|\{j,k\}\cap K|=1$, and let $\gamma\in\cI_K(\lambda)$ with $\Lambda_K(\gamma)\subseteq \{(\circ,\bullet),(\bullet,\circ)\}$.  Then for each $j\larc{a}k\in\gamma$ with $m_{jk}(\gamma)=1$
\begin{enumerate}
\item[(a)]  $\Lambda_K(j\larc{a}k)=(\circ,\bullet)$ if and only if $j\larc{a}k\in \cI_{i\slarc{a'}k}(\lambda)\cap \gamma$ for some $i<j$,
\item[(b)]  $\Lambda_K(j\larc{a}k)=(\bullet,\circ)$ if and only if  $j\larc{a}k\in\cI_{j\slarc{a'}l}(\lambda)\cap \gamma$ for some $l>k$.
\end{enumerate}
\end{lemma}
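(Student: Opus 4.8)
The plan is to track how each arc of $\gamma$ descends from $\lambda$, translate the labels $\Lambda_K$ into purely combinatorial statements about $\gamma$, and then run an infinite-descent (``staircase'') argument against the hypothesis $\Lambda_K(\gamma)\subseteq\{(\circ,\bullet),(\bullet,\circ)\}$.

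First I would record the basic dichotomy. Since $\gamma$ is a fixed element of the Cartesian product $\cI_K(\lambda)$, every nonempty arc of $\gamma$ is the chosen element of $\cI_{i\slarc{a'}l}(\lambda)$ for a unique arc $i\larc{a'}l\in\lambda$, its \emph{origin}. Because every arc of $\lambda$ has exactly one endpoint in $K$ while every arc of $\gamma$ has both endpoints in $K$, an arc $j\larc{a}k\in\gamma$ either inherits its right endpoint $k$ from an origin $i\larc{a'}k$ with $i\notin K$ (``right-inherited'', which is exactly alternative (a) of the Lemma) or inherits its left endpoint $j$ from an origin $j\larc{a'}l$ with $l\notin K$ (``left-inherited'', alternative (b)), and exactly one of these occurs. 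Since $\lambda$ is a $q$-set partition, its distinct arcs have distinct left and distinct right endpoints; hence the unique arc of $\lambda$ ending at a given $k\in K$ can be the origin of at most one right-inherited arc of $\gamma$, and likewise on the left. In particular, if $m_{jk}(\gamma)=2$ then one copy of $j\larc{a}k$ is right-inherited and the other left-inherited.

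Next I would translate the labels. For a simple arc $j\larc{a}k$ (i.e. $m_{jk}(\gamma)=1$), the perturbation rules together with (\ref{LabellingRules}) give $\Lambda_K^L(j\larc{a}k)=\circ$ precisely when $\gamma$ contains an arc $j\larc{}l$ with $l>k$, and $\Lambda_K^R(j\larc{a}k)=\circ$ precisely when $\gamma$ contains an arc $h\larc{}k$ with $h<j$; under the hypothesis exactly one of $\Lambda_K^L,\Lambda_K^R$ is $\circ$ (this is Lemma \ref{HalfInLemma}(2)). Combined with the dichotomy of the previous paragraph, it suffices to prove one implication: if $j\larc{a}k\in\gamma$ is simple and right-inherited then $\Lambda_K(j\larc{a}k)=(\circ,\bullet)$. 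Indeed the statement for left-inherited arcs follows by the left--right reflection of the whole situation, and then (a) and (b) both follow since ``right- vs. left-inherited'' and ``$(\circ,\bullet)$ vs. $(\bullet,\circ)$'' are each exclusive and exhaustive for simple arcs.

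For that implication, suppose for contradiction that $\alpha=j\larc{a}k$ is simple and right-inherited but $\Lambda_K(\alpha)=(\bullet,\circ)$, so $\gamma$ has an arc ending at $k$ starting strictly left of $j$. Let $\beta=x_0\larc{}k$ be the \emph{leftmost}-starting arc of $\gamma$ ending at $k$. The distinctness property forces $\beta$ to be simple, and since the arc of $\lambda$ ending at $k$ is already the origin of $\alpha$, forces $\beta$ to be left-inherited; being leftmost, $\Lambda_K^R(\beta)=\bullet$, so by the hypothesis $\Lambda_K(\beta)=(\circ,\bullet)$ and $\gamma$ contains an arc from $x_0$ reaching past $k$. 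Let $\beta'=x_0\larc{}y_1$ be the \emph{longest} arc of $\gamma$ from $x_0$; again $\beta'$ is simple, and since the arc of $\lambda$ starting at $x_0$ is the origin of $\beta$, $\beta'$ is right-inherited; being longest, $\Lambda_K^L(\beta')=\bullet$, so $\Lambda_K(\beta')=(\bullet,\circ)$ and $y_1>k$. Thus $\beta'$ again has the status of $\alpha$ but with a strictly larger right endpoint, and iterating alternately (``leftmost ending at $\ldots$'', then ``longest starting at $\ldots$'') produces arcs of $\gamma$ whose successive turning left endpoints strictly decrease and turning right endpoints strictly increase; since the hypothesis forbids the label $(\bullet,\bullet)$, the iteration can never stop, yielding infinitely many distinct arcs of the finite multiset $\gamma$ --- a contradiction. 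The main obstacle is making this descent airtight: at each stage one must pin down exactly which arc of $\lambda$ has been ``used up'' in order to force the alternating inheritance types and the simplicity of the staircase arcs, and then check the strict monotonicity that prevents the staircase from closing up or stalling.
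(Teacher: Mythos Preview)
Your proof is correct and follows essentially the same approach as the paper's. The paper packages the contradiction slightly differently: it selects a counterexample to the forward implications with $k-j$ maximal and derives, in a single step, a strictly longer counterexample, whereas you run the full staircase to infinity; these are equivalent ways of terminating the same spiral argument. Your reduction to a single implication via the right-/left-inherited dichotomy is a clean bit of bookkeeping that the paper handles instead by proving the two forward implications and then deducing the converses.
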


\begin{proof}
We first prove
\begin{enumerate}
\item[(a')] If $\Lambda_K(j\larc{a}k)=(\circ,\bullet)$, then $j\larc{a}k\in \cI_{i\slarc{a'}k}(\lambda)\cap \gamma$ for some $i<j$,
\item[(b')] If $\Lambda_K(j\larc{a}k)=(\bullet,\circ)$, then $j\larc{a}k\in\cI_{j\slarc{a'}l}(\lambda)\cap \gamma$ for some $l>k$.
\end{enumerate}
Suppose  $k-j$ is  maximal such that $j\larc{a}k\in \gamma$ is a counterexample to (a') or (b').  WLOG assume that $\Lambda_K(j\larc{a}k)=(\circ,\bullet)$.  Since $j\larc{a}k$ is a counterexample, there exists $l>k$ such that $j\larc{a}k\in \cI_{j\slarc{a'}l}(\lambda)$.    However, $\Lambda_K(j\larc{a}k)=(\circ,\bullet)$ implies there exists $j\larc{b}l'\in \gamma$ such that $l'>k$.  Let $l'$ be maximal with this property.  Since $j\larc{a'}l\in \lambda$, there must be $i<j$ such that $j\larc{b}l'\in \cI_{i\slarc{b'}l'}(\lambda)$.  However, $\Lambda_K(j\larc{b}l')\in \{(\circ,\bullet),(\bullet,\circ)\}$, and by the maximality of $l'$, $\Lambda_K(j\larc{b}l')=(\bullet,\circ)$, which contradicts (b).  Since  $l'-j>k-j$ this contradicts the maximality of our counterexample, proving (a') and (b').

Conversely, suppose $j\larc{a}k\in \cI_{i\slarc{a'}k}(\lambda)$ for some $i<j$.   If $\Lambda_K(j\larc{a}k)=(\bullet, \circ)$, then by (b') there exists $l>k$ such that $j\larc{a}k\in \cI_{j\slarc{b}l}(\lambda)$.  However, in this case $m_{jk}(\lambda)=2$.  Thus, $\Lambda_K(j\larc{a}k)=(\circ,\bullet)$. The converse of (b') is proved similarly.
\end{proof}

The last lemma will help us keep track of elements of $\cC_K(\lambda)$.

\begin{lemma} \label{ComeFromCrossingLemma}
Let $K\subseteq L\subseteq \ZZ_{\geq 1}$ be finite sets.  Suppose $\lambda\in\cS_L(q)$ satisfies $j\larc{a}k\in\lambda$ only if $|\{j,k\}\cap K|=1$, and let $\gamma\in\cI_K(\lambda)$ with $\Lambda_K(\gamma)\subseteq \{(\circ,\bullet),(\bullet,\circ)\}$.  If $i\larc{a}k,i\larc{b}j\in \gamma$ or $i\larc{a}k,j\larc{b}k\in \gamma$ with $i<j<k$, then there exist $h<i$ and $l>k$ such that $(h\larc{a'}k,i\larc{b'}l)\in \cC_K(\lambda)$.
\end{lemma}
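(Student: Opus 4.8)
The plan is to work with the arc $i\larc{a}k$ that occurs in $\gamma$ in both cases of the hypothesis; note $i,k\in K$ because $\gamma\in\cM_K(q)$, and $i<k$ because $i<j<k$. I would produce the two arcs of the desired crossing separately: first an arc of $\lambda$ starting at $i$ whose right endpoint exceeds $k$, then an arc of $\lambda$ ending at $k$ whose left endpoint lies below $i$. Once both are in hand the conclusion is automatic: if $h\larc{a'}k,\ i\larc{b'}l\in\lambda$ with $h<i$, $l>k$, $h,l\notin K$ and $i,k\in K$, then $h<i<k<l$ shows $(h\larc{a'}k,i\larc{b'}l)\in\cC_K(\lambda)$ (these arcs are distinct, since one ends at $k$ from the left of $i$ while the other starts at $i$), which is exactly what is asked.

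The engine is the dictionary between arcs of $\gamma\in\cI_K(\lambda)$ and arcs of $\lambda$. Since every arc of $\lambda$ has exactly one endpoint in $K$, each occurrence of an arc $x\larc{e}y$ in $\gamma$ is contributed by a unique arc of $\lambda$, and that arc is either $u\larc{}y$ with $u<x$, $u\notin K$ (the left endpoint was contracted), or $x\larc{}v$ with $v>y$, $v\notin K$ (the right endpoint was contracted). I would record two consequences. If $m_{xy}(\gamma)=2$, then since $\lambda$ is a $q$-set partition it has at most one arc ending at $y$ and at most one starting at $x$, so the two occurrences of $x\larc{e}y$ cannot be of the same type; hence $\lambda$ contains both an arc $u\larc{}y$ with $u<x$, $u\notin K$ and an arc $x\larc{}v$ with $v>y$, $v\notin K$. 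If $m_{xy}(\gamma)=1$, then by Lemma~\ref{GoingToHalfLemma} (using $\Lambda_K(\gamma)\subseteq\{(\circ,\bullet),(\bullet,\circ)\}$) the type of the single occurrence is read off from $\Lambda_K(x\larc{e}y)$: the value $(\circ,\bullet)$ forces an arc $u\larc{}y\in\lambda$ with $u<x$, $u\notin K$, and $(\bullet,\circ)$ forces an arc $x\larc{}v\in\lambda$ with $v>y$, $v\notin K$.

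For the arc starting at $i$: I would let $m$ be the largest right endpoint of a $\gamma$-arc with left endpoint $i$ (this exists and $m\ge k$), and examine the occurrence(s) of $i\larc{c}m$. If $m_{im}(\gamma)=2$, the first consequence gives an arc $i\larc{}v\in\lambda$ with $v>m\ge k$, $v\notin K$. If $m_{im}(\gamma)=1$, then by maximality of $m$ and the stacking rule (TL) the arc $i\larc{c}m$ sits at the top among $\gamma$-arcs at its left endpoint, so $\Lambda_K^L(i\larc{c}m)=\bullet$ by (\ref{LabellingRules}); as $\Lambda_K(\gamma)$ contains no $(\bullet,\bullet)$ we get $\Lambda_K(i\larc{c}m)=(\bullet,\circ)$, and the second consequence gives $i\larc{}v\in\lambda$ with $v>m\ge k$, $v\notin K$. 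The arc ending at $k$ is obtained by the mirror-image argument: let $p$ be the smallest left endpoint of a $\gamma$-arc with right endpoint $k$ (so $p\le i$), examine the occurrence(s) of $p\larc{d}k$, and run the dual argument with rule (TR) and the right-hand label $\Lambda_K^R$ in place of $\Lambda_K^L$; this produces an arc $h\larc{}k\in\lambda$ with $h<p\le i$, $h\notin K$.

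The step I expect to be the main obstacle is the treatment of multiplicity-two arcs. Lemma~\ref{GoingToHalfLemma} only addresses arcs with $m_{xy}(\gamma)=1$, so the doubled case must be handled directly from the $q$-set-partition structure of $\lambda$; moreover the implication ``topmost at an endpoint $\Rightarrow$ the corresponding coordinate of $\Lambda_K$ is $\bullet$'' is only usable when the relevant multiplicity is one, which is why the argument splits on $m_{im}(\gamma)$ and $m_{pk}(\gamma)$ rather than quoting Lemma~\ref{GoingToHalfLemma} uniformly. Everything else is routine once the dictionary is set up.
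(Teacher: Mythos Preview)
Your argument is correct. In fact it proves a slightly stronger statement than the lemma: for \emph{any} arc $i\larc{a}k\in\gamma$ (not just one involved in a pair sharing an endpoint), the constraint $\Lambda_K(\gamma)\subseteq\{(\circ,\bullet),(\bullet,\circ)\}$ forces both an arc $i\larc{}l\in\lambda$ with $l>k$ and an arc $h\larc{}k\in\lambda$ with $h<i$. The two-arc hypothesis in the lemma is not actually needed in your approach; it is a consequence of the labeling constraint that every arc of $\gamma$ has company at each endpoint.

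The paper's proof is organized differently. It fixes the WLOG case $i\larc{a}k,i\larc{b}j\in\gamma$, first uses Lemma~\ref{GoingToHalfLemma} on the shorter arc $i\larc{b}j$ (which is forced to have label $(\circ,\bullet)$), and then case-splits on the origin of the longer arc $i\larc{a}k$; in the second case it runs a maximality argument on $k$. Your approach instead decouples the two arcs of the desired crossing and handles each by looking at the \emph{extremal} arc of $\gamma$ at the relevant endpoint (maximal right endpoint at $i$, minimal left endpoint at $k$), with a clean split on multiplicity. This is more symmetric and avoids the somewhat awkward ``suppose $k$ is maximal'' induction in the paper. Your direct treatment of the $m_{xy}(\gamma)=2$ case via the $q$-set-partition property of $\lambda$ is also explicit where the paper is terse.
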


\begin{proof}
WLOG, assume  $i\larc{a}k,i\larc{b}j\in \gamma$ with $i<j<k$.   By Lemma \ref{GoingToHalfLemma}, $\gamma\in \cI_K(\lambda)$ implies 
$$(i\larc{a}k,i\larc{b}j)\in \cI_{i\slarc{a'}l}(\lambda)\times\cI_{h\slarc{b'}j}(\lambda)\quad   \text{or}\quad  (i\larc{a}k,i\larc{b}j)\in \cI_{h'\slarc{a'}k}(\lambda)\times\cI_{h\slarc{b'}j}(\lambda).$$
In the first case, we are done.

Suppose $k$ is maximal such that  $ (i\larc{a}k,i\larc{b}j)\in \cI_{h'\slarc{a'}k}(\lambda)\times\cI_{h\slarc{b'}j}(\lambda)$.   If there is no $i\larc{b'}l\in \lambda$ such that $l>k$ and $l\notin K$, then $m_{ik}(\gamma)=1$ so  by Lemma \ref{GoingToHalfLemma}(a) we have $\Lambda_K(i\larc{a}k)=(\circ,\bullet)$.  Thus, there must exist $h''<i$ and $l>k$ such that $i\larc{c}l\in \gamma\cap \cI_{h''\slarc{c'}l}(\lambda)$.  However, this contradicts the maximality of $k$.
\end{proof}

The proof of Theorem \ref{ExplicitCoefficientsTheorem} uses observation (\ref{SequenceIdentity}), and proves that 
$$\cR_K(\lambda)=\left\{(\gamma_0,\ldots, \gamma_\ell)\bigg| \begin{array}{@{}l@{}} \gamma_0=\lambda, \gamma_\ell=\emptyset,  \ell\in\ZZ_{\geq 0},\\ \gamma_{k+1}\in \cI_{K_k}(\tilde\gamma_k), K_0=K\end{array}\right\}\longleftrightarrow \{f:\cC_K(q)\rightarrow \FF_q\}.$$

\begin{proof}[Proof of Theorem \ref{ExplicitCoefficientsTheorem}]
Suppose $\lambda\in\cS_L(q)$ satisfies $j\larc{a}k\in\lambda$ only if $|\{j,k\}\cap K|=1$.
  
Let $(\gamma_0,\gamma_1,\gamma_2,\ldots, \gamma_\ell)\in \cR_K(\lambda)$.  We wish to construct $f:\cC_K(q)\rightarrow \FF_q$ using this sequence.  First, note that for $(i\larc{a}k,j\larc{b}l)\in \cC(\gamma_m)$ with $\Lambda_{K_m}(i\larc{a}k)=(\circ,\bullet)$ and $\Lambda_{K_m}(j\larc{b}l)=(\bullet,\circ)$, by Lemma \ref{GoingToHalfLemma} there exists $i'<i$ and $l'>l$ such that $(i'\larc{a'}k,j\larc{b'}l')\in \cC_{K_m}(\tilde\gamma_{m-1})$.  Furthermore, if  $\tau:K_{m+1}\rightarrow K_m$ is the order preserving bijection, then  $(i\larc{a}k,j\larc{b}l)\in \cC_{K_{m+1}}(\tilde\gamma_m)$ if and only if there exists $i'<\tau(j)$ and $l'>\tau(k)$ such that $(i'\larc{a}\tau(k),\tau(j)\larc{b}l')\in \cC(\gamma_m)$ with $\Lambda_{K_m}(i'\larc{a}\tau(k))=(\circ,\bullet)$ and $\Lambda_{K_m}(\tau(j)\larc{b}l')=(\bullet,\circ)$.  By composing these identifications we get injective maps 
$$\iota_m:\cC_{K_m}(\tilde\gamma_m)\rightarrow \cC_{K}(\lambda).$$

The following rules explain how to define $f:\cC_K(\lambda)\setminus \iota_1(\cC_{K_1}(\gamma_1))\rightarrow \FF_q$.  Since each $\gamma_m\in \cI_{K_{m-1}}(\tilde\gamma_{m-1})$, we can iterate to find the remaining values of $f$. 
\begin{enumerate}
\item[(1)] If $m_{jk}(\gamma)=2$, then the pair $(j\larc{a}k,j\larc{-a}k)\in \cI_{i\slarc{b}k}(\lambda)\times\cI_{j\slarc{c}l}(\lambda)$ for some $i<j<k<l$ with $i,l\notin K$.  Define $f(i\larc{b}k,j\larc{c}l)=a$.  
\item[(2)] If $m_{jk}(\gamma)=1$, then by Lemma \ref{HalfInLemma} and Lemma \ref{ComeFromCrossingLemma} either 
\begin{itemize}
\item $\{j\larc{c}l,j\larc{d}k\}\subseteq \gamma$ with $k<l$ and there exists  $(j\larc{a}m,i\larc{b}k)\in \cC_K(\lambda)$, 
\item  $\{i\larc{c}k,j\larc{d}k\}\subseteq \gamma$ with $i<j$ and there exists $(h\larc{a}k,j\larc{b}l)\in \cC_K(\lambda)$.  
\end{itemize}
In the first case, $f(j\larc{a}m,i\larc{b}k)=d$, and in the second case, $f(h\larc{a}k,j\larc{b}l)=d$.
\item[(3)] Suppose $m_{jk}(\gamma)=0$, and either 
\begin{itemize}
\item The pair $(j\larc{c}l,\emptyset)\in \cI_{j\slarc{a}m}(\lambda)\times\cI_{i\slarc{b}k}(\lambda)$ with $(j\larc{a}m,i\larc{b}k)\in \cC_K(\lambda)$, 
\item The pair $(i\larc{c}k,\emptyset)\in \cI_{h\slarc{a}k}(\lambda)\times\cI_{j\slarc{b}l}(\lambda)$ with $(h\larc{a}k,j\larc{b}l)\in \cC_K(\lambda)$.
\end{itemize}
Then $f(j\larc{a}m,i\larc{b}k)=0$ or $f(h\larc{a}k,j\larc{b}l)=0$, respectively.
\end{enumerate}
For example, suppose $K=\{4,5,7,8,9,10\}$, and we have 
$$(\gamma_0, \gamma_1, \gamma_2,\gamma_3)=\bigg(
\begin{tikzpicture}[baseline = .1cm]
    \foreach \y in {1,2,3,6,11,12}
        \node(\y) at (\y/2,0) [inner sep=0pt] {$\circ$};
    \foreach \z in {4,5,7,8,9,10}
        \node(\z) at (\z/2,0) [inner sep=0pt] {$\bullet$};
    \foreach \x in {1,...,12}
        \node at (\x/2,-.25) [inner sep=0pt] {$\scs\x$};
    \draw (1) .. controls (1.75/2, .6*1.5) and (4.25/2, .6*1.5) .. node [above] {$\scs a $} (5);
    \draw (2) .. controls (3/2, .8*1.5) and (6/2, .8*1.5) .. node [above] {$\scs b $}  (7);
    \draw (3) .. controls (4.5/2, .9*1.5) and (7.75/2, .9*1.5) .. node [above,pos=.4] {$\scs c $}  (9);
    \draw (4) .. controls (6/2, 1*1.5) and (10/2, 1*1.5) .. node [above] {$\scs d $}  node [above=-2pt,pos=.21] {$\scscs 47$} node [above,pos=.07] {$\scscs 45$} node [above=-1pt,pos=.32] {$\scscs 49$} (12);
    \draw (6) .. controls (7/2, .7*1.5) and (9/2, .7*1.5) .. node [above] {$\scs e $}  (10) ;
    \draw (8) .. controls (8.75/2, .6*1.5) and (10.25/2, .6*1.5) .. node [above=-3pt] {$\scs f $}  node [below=-1pt,pos=.16] {$\scscs 89$} node [below,pos=.35] {$\scscs 810$} (11);
\end{tikzpicture}, 
\begin{tikzpicture}[baseline = .1cm]
    \foreach \z in {4,5,7,8,9,10}
        \node(\z) at (\z/2,0) [inner sep=0pt] {$\bullet$};
    \foreach \x in {4,5,7,8,9,10}
        \node at (\x/2,-.25) [inner sep=0pt] {$\scs\x$};
    \draw (4) .. controls (4.25/2, .5) and (4.75/2, .5) .. node [above=2pt,pos=.95] {$\scs a' $} (5);
    \draw (4) .. controls (4.75/2, 1) and (8.25/2, 1) .. node [above=-2pt] {$\scs -c' $}  (9);
    \draw (4) .. controls (4.75/2, 1.5) and (8.25/2, 1.5) .. node [above] {$\scs c' $}   (9);
    \draw (8) .. controls (8.5/2, .75) and (9.5/2, .75) .. node [above=-2pt] {$\scs -e' $} node [below,pos=.25] {$\scscs 89$} (10) ;
    \draw (8) .. controls (8.5/2, 1.25) and (9.5/2, 1.25) .. node [above] {$\scs e' $}   (10);
\end{tikzpicture},\begin{tikzpicture}[baseline = .1cm]
    \foreach \y in {10,13}
        \node(\y) at (\y/2,0) [inner sep=0pt] {$\bullet$};
    \foreach \x in {10,13}
        \node at (\x/2,-.25) [inner sep=0pt] {$\scs\x$};
    \draw (10) .. controls (10.75/2, 1.5) and (12.25/2, 1.5) .. node [above] {$\scs g$}   (13);
    \draw (10) .. controls (10.75/2, 1) and (12.25/2, 1) .. node [above=-2pt] {$\scs -g$}   (13) ;
\end{tikzpicture},\emptyset\bigg) $$
where the elements of $\cC_K(q)$ are labeled by pairs $jk$.  Apply (1--3) to $\lambda$ and $\gamma_1$ to obtain $f(49)=c'$, $f(810)=e'$, $f(45)=a'$, $f(47)=0$, and $f(89)$ is undefined. If we now apply (1--3) to
$$\tilde\gamma_1=
\begin{tikzpicture}[baseline = .1cm]
    \foreach \y in {4,7,9,10,13,15}
        \node(\y) at (\y/2,0) [inner sep=0pt] {$\bullet$};
    \foreach \z in {5,6,11,12,14}
        \node(\z) at (\z/2,0) [inner sep=0pt] {$\circ$};
    \foreach \x in {4,...,7}
        \node at (\x/2,-.25) [inner sep=0pt] {$\scs\x$};
    \foreach \x in {9,...,15}
        \node at (\x/2,-.25) [inner sep=0pt] {$\scs\x$};
    \draw (6) .. controls (6.25/2, .5) and (6.75/2, .5) .. node [above] {$\scs 1$} (7);
    \draw (4) .. controls (5/2, 1.5) and (11/2, 1.5) .. node [above] {$\scs 1$}  (12);
    \draw (5) .. controls (6/2, 1.5) and (12/2, 1.5) .. node [above] {$\scs 1$}   (13);
    \draw (10) .. controls (10.75/2, 1) and (13.25/2, 1) .. node [above,pos=.1] {$\scs 1$} node [above=-2pt] {$\scscs 89$}  (14) ;
    \draw (11) .. controls (11.75/2, 1) and (14.25/2, 1) .. node [above,pos=.6] {$\scs 1$}   (15);
\end{tikzpicture}$$ 
and $\gamma_2$, we obtain that $f(89)=g$.  Another iteration gives no new information, there are no more significant crossings in 
$$\tilde\gamma_2=
\begin{tikzpicture}[baseline = .1cm]
    \foreach \y in {10,15}
        \node(\y) at (\y/2,0) [inner sep=0pt] {$\bullet$};
    \foreach \y in {11,14}
        \node(\y) at (\y/2,0) [inner sep=0pt] {$\circ$};
    \foreach \x in {10,11,14,15}
        \node at (\x/2,-.25) [inner sep=0pt] {$\scs\x$};
    \draw (10) .. controls (10.75/2, 1.25) and (13.25/2, 1.25) .. node [above] {$\scs 1$}   (14);
    \draw (11) .. controls (11.75/2, 1.25) and (14.25/2, 1.25) .. node [above=-2pt] {$\scs 1$}   (15) ;
\end{tikzpicture}.$$

Conversely, suppose $f:\cC_K(\lambda)\rightarrow \FF_q$ is a function.  Then $f$ recursively determines a sequence $(\gamma_0,\gamma_1,\ldots, \gamma_\ell)$ as follows.  Let $\gamma_0=\lambda$.  Then for $1\leq m\leq \ell$, 
\begin{description}
\item[Step 0.] Let $\tilde\gamma_{m-1}\in \cS(q)$ be obtained from $\gamma_{m-1}$ via straightening rules (SL), (SR), (SB).  Let $f_\iota=f\circ\iota_{m-1}$, $B=\{c\in\cC_K(\tilde\gamma_{m-1})\mid f_{\iota}(c)\neq 0\}$, $A=\emptyset$, and $\gamma_m=\emptyset$.
\item[Step 1.]  For each maximal crossing $c = (i\larc{a}k, j\larc{b} l)\in B$, set
$$\gamma_m=\gamma_m\cup \{j\larc{f_\iota(c)}k,j\larc{-f_\iota(c)}k\},$$
$B=B-\{c\}$, and $A=A\cup\{c\}$.
\item[Step 2.]If $B=\emptyset$, then we are done.  Else, for each maximal crossing $c = (i\larc{a} k, j\larc{b} l)\in B$,  if either $i\larc{a} k$ or $j\larc{b}l$ is involved in no crossing in $A$, then 
$$\gamma_m=\gamma_m\cup\{j\larc{f_\iota(c)} k\},\quad  B=B-\{c\},\quad \text{and}\quad A=A\cup\{c\}.$$
Otherwise, $B=B-\{c\}$.
\item[Step 3.] Repeat Step 2.
\end{description}
For example, suppose
$$\lambda=
\begin{tikzpicture}[baseline = .1cm]
    \foreach \y in {1,2,3,6,11,12}
        \node(\y) at (\y/2,0) [inner sep=0pt] {$\circ$};
    \foreach \z in {4,5,7,8,9,10}
        \node(\z) at (\z/2,0) [inner sep=0pt] {$\bullet$};
    \foreach \x in {1,...,12}
        \node at (\x/2,-.25) [inner sep=0pt] {$\scs\x$};
    \draw (1) .. controls (1.75/2, .6*1.5) and (4.25/2, .6*1.5) .. node [above] {$\scs a $} (5);
    \draw (2) .. controls (3/2, .8*1.5) and (6/2, .8*1.5) .. node [above] {$\scs b $}  (7);
    \draw (3) .. controls (4.5/2, .9*1.5) and (7.75/2, .9*1.5) .. node [above,pos=.4] {$\scs c $}  (9);
    \draw (4) .. controls (6/2, 1*1.5) and (10/2, 1*1.5) .. node [above] {$\scs d $}  node [above=-2pt,pos=.21] {$\scscs 47$} node [above,pos=.07] {$\scscs 45$} node [above=-1pt,pos=.32] {$\scscs 49$} (12);
    \draw (6) .. controls (7/2, .7*1.5) and (9/2, .7*1.5) .. node [above] {$\scs e $}  (10) ;
    \draw (8) .. controls (8.75/2, .6*1.5) and (10.25/2, .6*1.5) .. node [above=-3pt] {$\scs f $}  node [below=-1pt,pos=.16] {$\scscs 89$} node [below,pos=.35] {$\scscs 810$} (11);
\end{tikzpicture}$$
and $f(45)=3$, $f(47)=1$, $f(49)=f(810)=0$ and $f(89)=5$.   To compute $\gamma_1$, we have $B=\{45,47,89\}$.   Step 1 gives
$$\gamma_1=\{4\larc{1}7,4\larc{-1}7,8\larc{5}9,8\larc{-5}9\},\quad B=\{45\}, \quad A=\{47,89\}.$$
Then for Step 2, $45=(1\larc{a}5,4\larc{d}12)$ satisfies $1\larc{a} 5$ is not involved in any crossing in $A$, so
$$\gamma_1=\{4\larc{1}7,4\larc{-1}7,8\larc{5}9,8\larc{-5}9,4\larc{3}5\}, \quad B=\{\}, \quad A=\{44,47,89\}.$$
Since
$$\gamma_1=\begin{tikzpicture}[baseline = .1cm]
    \foreach \z in {4,5,7,8,9,10}
        \node(\z) at (\z/2,0) [inner sep=0pt] {$\bullet$};
    \foreach \x in {4,5,7,8,9,10}
        \node at (\x/2,-.25) [inner sep=0pt] {$\scs\x$};
    \draw (4) .. controls (4.25/2, .5) and (4.75/2, .5) .. node [above=-1pt,pos=.8] {$\scs 3 $} (5);
    \draw (4) .. controls (4.75/2, 1) and (6.25/2, 1) .. node [above=-2pt] {$\scs -1 $}  (7);
    \draw (4) .. controls (4.75/2, 1.5) and (6.25/2, 1.5) .. node [above] {$\scs 1 $}  (7);
    \draw (8) .. controls (8.25/2, .5) and (8.75/2, .5) .. node [above=-2pt,pos=.4] {$\scs -5 $}  (9) ;
    \draw (8) .. controls (8.25/2, 1) and (8.75/2, 1) .. node [above] {$\scs 5 $}  (9) ;
\end{tikzpicture},
\qquad\text{we have}\qquad \tilde\gamma_1=
\begin{tikzpicture}[baseline = .1cm]
    \foreach \z in {4,7,10,11,14,15}
        \node(\z) at (\z/2,0) [inner sep=0pt] {$\bullet$};
    \foreach \z in {5,6,9,12,13}
        \node(\z) at (\z/2,0) [inner sep=0pt] {$\circ$};
    \foreach \x in {4,...,7}
        \node at (\x/2,-.25) [inner sep=0pt] {$\scs\x$};
    \foreach \x in {9,...,15}
        \node at (\x/2,-.25) [inner sep=0pt] {$\scs\x$};
    \draw (6) .. controls (6.25/2, .5) and (6.75/2, .5) .. node [above=-1pt] {$\scs 1 $} (7);
    \draw (4) .. controls (4.75/2, 1.25) and (8.25/2, 1.25) .. node [above=-2pt] {$\scs 1 $}  (9);
    \draw (5) .. controls (5.75/2, 1.25) and (9.25/2, 1.25) .. node [above] {$\scs 1 $}  (10);
    \draw (11) .. controls (11.5/2, .5) and (12.5/2, .5) .. node [above=-2pt,pos=.4] {$\scs 1 $}  (13) ;
    \draw (12) .. controls (12.5/2, .5) and (13.5/2, .5) .. node [above] {$\scs 1 $}  (14) ;
\end{tikzpicture},$$
and the by Lemma \ref{HalfInLemma}, $\gamma_2=\emptyset$.

Since these two algorithms invert one-another, by (\ref{SequenceIdentity})
$$\langle \Res_{U_K}^{U_L}(\chi^\lambda),\One\rangle = q^{r_K^L(\lambda)}|\{f:\cC_K(\lambda)\rightarrow \FF_q\}|=q^{r_K^L(\lambda)}q^{|\cC_K(\lambda)|},$$
as desired.
\end{proof}

We obtain an analogous result for multisets $\lambda\in \cM_K(q)$ (in particular, for tensor products), where the correct generalization of $\cC_K(\lambda)$ is 
$$\cC_K(\lambda)=\{(i\larc{a}k,j\larc{b}l)\in \cC(\lambda)\mid i<j<k<l, \Lambda_K(i\larc{a}k)=(\circ,\bullet),\Lambda_K(j\larc{b}l)=(\bullet,\circ)\}.$$

\begin{corollary}
Let $K\subseteq \ZZ_{\geq 1}$ be  finite subset and $\lambda\in \cM_K(q)$ with $\Lambda_K(\lambda)\in \{(\circ,\bullet),(\bullet,\circ)\}$.  Then
$$\langle \chi^\lambda,\One\rangle= q^{|\cC_K(\lambda)|}.$$
\end{corollary}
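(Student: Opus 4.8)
The plan is to deduce the Corollary from Theorem \ref{StraighteningTheorem} together with Theorem \ref{ExplicitCoefficientsTheorem}, in the same way Theorem \ref{TensorResult} was deduced from Theorem \ref{MainTheorem}. First I would apply Theorem \ref{StraighteningTheorem} to $\lambda\in\cM_K(q)$ to obtain a set partition $\tilde\lambda\in\cS_{K'\cup L'}(q)$ with $|K'|=|K|$ and
$$\chi^\lambda=q^{-r_{K'}^{K'\cup L'}(\tilde\lambda)}\Res_{U_{K'}}^{U_{K'\cup L'}}(\chi^{\tilde\lambda})$$
as characters of $U_K\cong U_{K'}$. The preliminary observation is that the hypothesis $\Lambda_K(\lambda)\subseteq\{(\circ,\bullet),(\bullet,\circ)\}$ forces $\tilde\lambda$ to satisfy the hypothesis of Theorem \ref{ExplicitCoefficientsTheorem}: by the remark following Theorem \ref{StraighteningTheorem}, for each arc $\alpha$ of $\lambda$ the pair $\Lambda_K(\alpha)$ records exactly whether the left and right endpoints of the image of $\alpha$ in $\tilde\lambda$ lie in $L'$ or in $K'$, so $\Lambda_K(\alpha)$ having exactly one $\circ$ coordinate is equivalent to the corresponding arc of $\tilde\lambda$ having exactly one endpoint in $K'$. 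Hence Theorem \ref{ExplicitCoefficientsTheorem} applies to $\tilde\lambda$ with the subgroup $U_{K'}\subseteq U_{K'\cup L'}$, giving $\langle\Res_{U_{K'}}^{U_{K'\cup L'}}(\chi^{\tilde\lambda}),\One\rangle=q^{r_{K'}^{K'\cup L'}(\tilde\lambda)}q^{|\cC_{K'}(\tilde\lambda)|}$. Combining with the displayed identity, the $r$-factors cancel and $\langle\chi^\lambda,\One\rangle=q^{|\cC_{K'}(\tilde\lambda)|}$.

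It then remains to prove $|\cC_{K'}(\tilde\lambda)|=|\cC_K(\lambda)|$. Let $\Phi\colon\lambda\to\tilde\lambda$ be the bijection on arcs induced by the sequence of straightening moves. Since $\tilde\lambda$ is a set partition all of whose arcs have a single endpoint in $K'$, the simplified labeling from the same remark gives $\Lambda_{K'}(\Phi(\alpha))=\Lambda_K(\alpha)$ for every arc $\alpha$. Moreover each straightening move inserts $\circ$-nodes immediately adjacent to existing nodes and shifts all remaining labels order-preservingly, so $\Phi$ preserves the linear order of all pre-existing endpoints; consequently a nested or disjoint pair of arcs of $\lambda$ has nested or disjoint images, and a pair that crosses in $\lambda$ has images that cross in $\tilde\lambda$ with the same orientation. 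Together with $\Lambda_{K'}\circ\Phi=\Lambda_K$ this shows $\Phi$ restricts to an injection $\cC_K(\lambda)\hookrightarrow\cC_{K'}(\tilde\lambda)$. For surjectivity, a crossing in $\tilde\lambda$ of the type counted by $\cC_{K'}$ has a $\Phi$-preimage consisting of two arcs of $\lambda$ which, by order preservation, must either cross or be in conflict; a short case check of (SL), (SR) and (SB) shows that a conflicting pair is always re-routed into either a nesting or a crossing whose $\Lambda$-labels are $(\bullet,\circ)$ on the left-hand arc and $(\circ,\bullet)$ on the right-hand arc — the reverse of what $\cC_{K'}$ demands — hence never a crossing counted by $\cC_{K'}$. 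So the preimage crosses in $\lambda$, and again using $\Lambda_{K'}\circ\Phi=\Lambda_K$ it lies in $\cC_K(\lambda)$. Therefore $|\cC_K(\lambda)|=|\cC_{K'}(\tilde\lambda)|$ and the Corollary follows.

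I expect the only real work to be this last bijection; the reduction via Theorem \ref{StraighteningTheorem} and the identification of the hypothesis of Theorem \ref{ExplicitCoefficientsTheorem} are bookkeeping. The two subtle points in the bijection are (i) verifying that the invariant $\Lambda(\lambda^{(m)})\subseteq\{(\circ,\bullet),(\bullet,\circ)\}$ persists along the straightening (each move only deletes an arc from a stack at a node, preserving the top/bottom status of the others, and the freshly inserted $\circ$-nodes receive the expected labels), and (ii) checking that a conflicting pair — which under the hypothesis together with Lemma \ref{HalfInLemma} is of a very restricted form, either a weight-zero pair of multiplicity exactly two with no arc extending past either shared endpoint, or a (CL) or (CR) conflict in which the shorter arc is perturbed strictly below the longer one at the shared endpoint — genuinely cannot be re-routed into a crossing of the type that $\cC_{K'}$ counts. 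Once these are in place the counts agree. An alternative, somewhat longer route would be to rerun the argument of Theorem \ref{ExplicitCoefficientsTheorem} directly on the multiset, replacing Theorem \ref{ArcRestriction} by Theorem \ref{ArcTensorProduct} in the expansion of $\chi^\lambda=\bigotimes_{i\slarc{a}l\in\lambda}\chi^{i\slarc{a}l}$ and reusing Lemmas \ref{HalfInLemma} through \ref{ComeFromCrossingLemma}, but the reduction above is cleaner.
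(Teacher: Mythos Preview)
Your proposal is correct and follows essentially the same approach as the paper: apply Theorem~\ref{StraighteningTheorem} to pass from $\lambda$ to $\tilde\lambda$, then invoke Theorem~\ref{ExplicitCoefficientsTheorem}. The paper's own proof is in fact only those two sentences; you have supplied the verifications (that $\tilde\lambda$ satisfies the half-in hypothesis, and that $|\cC_K(\lambda)|=|\cC_{K'}(\tilde\lambda)|$) which the paper leaves entirely implicit, and your case analysis of how (SL), (SR), (SB) interact with the crossing count is accurate.
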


\begin{proof} 
By Theorem \ref{StraighteningTheorem} there exists $\tilde\lambda\in \cS_{K'\cup L'}(q)$ such that 
$$\chi^{\lambda}=q^{-r_{K'}^{K'\cup L'}(\tilde\lambda)}\Res_{U_{K'}}^{U_{K'\cup L'}}(\chi^{\tilde\lambda}).$$
Apply Theorem \ref{ExplicitCoefficientsTheorem} to get the desired result.
\end{proof}

\begin{remark}
In general, the coefficient of $\One$ will not be a power of $q$, but merely polynomial in $q$, as demonstrated by
$$\langle \chi^{\{1\slarc{a}5,1\slarc{b} 5,1\slarc{c}5\}},\One\rangle_{U_5(q)}=3q-2, \quad\text{when $a+b+c=0$.}$$
\end{remark}

\end{document}